\newcommand{\Z}{{\mathbb{Z}}}
\newcommand{\C}{{\mathbb{C}}}
\newcommand{\T}{{\mathbb{T}}}
\newcommand{\calA}{{\mathcal A}}
\newcommand{\calC}{{\mathcal C}}
\newcommand{\calH}{{\mathcal H}}
\newcommand{\calL}{{\mathcal L}}
\newcommand{\calO}{{\mathcal O}}
\newcommand{\calV}{{\mathcal V}}
\newcommand{\calX}{{\mathcal X}}
\newcommand{\ol}{\overline}
\newcommand{\uloopr}[1]{\ar@'{@+{[0,0]+(-4,5)}@+{[0,0]+(0,10)}@+{[0,0] +(4,5)}}^{#1}}
\newcommand{\uloopd}[1]{\ar@'{@+{[0,0]+(5,4)}@+{[0,0]+(10,0)}@+{[0,0]+ (5,-4)}}^{#1}}
\newcommand{\dloopr}[1]{\ar@'{@+{[0,0]+(-4,-5)}@+{[0,0]+(0,-10)}@+{[0, 0]+(4,-5)}}_{#1}}
\newcommand{\dloopd}[1]{\ar@'{@+{[0,0]+(-5,4)}@+{[0,0]+(-10,0)}@+{[0,0 ]+(-5,-4)}}_{#1}}
\newcommand{\luloop}[1]{\ar@'{@+{[0,0]+(-8,2)}@+{[0,0]+(-10,10)}@+{[0, 0]+(2,2)}}^{#1}}
\newcommand{\FSGr}{\mathbf{FSGr}}
\newcommand{\Atil}{\tilde{A}}
\DeclareMathOperator{\coker}{coker}
\newcommand{\mon}[1]{\calV(#1)} 
\newcommand{\Si}{{\rm Sink}}
 \DeclareMathOperator{\Tr}{Tr}
\newcommand{\Cstaralg}{C^*\text{-}\mathbf{alg}}
\DeclareMathOperator{\rd}{red}
\newcommand{\Cstred}{C^*_{\rd}}
\newcommand{\bgast}{\mbox{\Large$*$}}
\DeclareMathOperator{\diag}{diag}
\DeclareMathOperator{\trace}{trace}
\theoremstyle{plain}
\newtheorem{theorem}{Theorem}[section]
\newtheorem{lemma}[theorem]{Lemma}
\newtheorem{proposition}[theorem]{Proposition}
\newtheorem{corollary}[theorem]{Corollary}
\theoremstyle{definition}
\newtheorem{definition}[theorem]{Definition}
\newtheorem{example}[theorem]{Example}
\newtheorem{remark}[theorem]{Remark}
\newtheorem*{remark*}{Remark}
\newtheorem*{remarks*}{Remarks}
\newtheorem*{assumption*}{Assumption}
\newtheorem{openproblem}[theorem]{Problem}
\newtheorem{subsec}[theorem]{}
\numberwithin{equation}{section}
\begin{document}

\title[C*-algebras of separated graphs]{C*-algebras of separated graphs}%

\author{P. Ara}

\address{Departament de Matem\`atiques, Universitat Aut\`onoma de Barcelona,
08193 Bellaterra (Barcelona), Spain.} \email{para@mat.uab.cat}

\author{K. R. Goodearl}

\address{Department of Mathematics, University of
California, Santa Barbara, CA 93106. }\email{goodearl@math.ucsb.edu}

\thanks{The first-named author was partially supported by DGI MICIIN-FEDER
MTM2008-06201-C02-01, and by the Comissionat per Universitats i
Recerca de la Generalitat de Catalunya.}

\subjclass[2000]{Primary 46L05, 46L09; Secondary 46L80}

\keywords{Graph C*-algebra, separated graph, amalgamated free
product, conditional expectation, ideal lattice}

\date{\today}

\begin{abstract}
The construction of the C*-algebra associated to a directed graph
$E$ is extended to incorporate a family $C$ consisting of partitions
of the sets of edges emanating from the vertices of $E$. These
C*-algebras $C^*(E,C)$ are analyzed in terms of their ideal theory
and K-theory, mainly in the case of partitions by finite sets. The
groups $K_0(C^*(E,C))$ and $K_1(C^*(E,C))$ are completely described
via a map built from an adjacency matrix associated to $(E,C)$. One
application determines the K-theory of the C*-algebras
$U^{\text{nc}}_{m,n}$, confirming a conjecture of McClanahan.  A
reduced C*-algebra $\Cstred(E,C)$ is also introduced and studied. A
key tool in its construction is the existence of canonical faithful
conditional expectations from the C*-algebra of any row-finite graph
to the C*-subalgebra generated by its vertices. Differences between
$\Cstred(E,C)$ and $C^*(E,C)$, such as simplicity versus
non-simplicity, are exhibited in various examples, related to some
algebras studied by McClanahan.
\end{abstract}
\maketitle

\section{Introduction}
\label{sect:intro}

Graph C*-algebras constitute an important class of
C*-algebras, providing models for the classification theory
and a rich source of examples and inspiration. Among the most
basic examples of graph C*-algebras are the Cuntz algebras
$\mathcal O _n$, initially studied by Cuntz \cite{Cuntz77, Cuntz81},
and the Cuntz-Krieger algebras  \cite{CuntzKrieger} associated to
finite square matrices with entries in $\{0,1\}$. We refer the
reader to \cite{Raeburn} for further information on this important
class of C*-algebras.
\smallskip

The present paper addresses the structure of a new class of graph
C*-algebras, associated to \emph{separated graphs} $(E,C)$, where
$E$ is a directed graph and $C$ is a family that gives a partition
of the set of edges departing from each vertex of $E$. (These
algebras have also been recently introduced by Duncan \cite{Dun}, with different notation, as
\emph{C*-algebras of edge-labelled graphs}, which should not be
confused with the \emph{labelled graph C*-algebras} developed by
Bates and Pask \cite{BP}. Our viewpoint, which was developed in \cite{AG} for the algebraic case, appears to be more flexible and better adapted to the construction and analysis of these algebras.) It was shown in \cite{AG} how to
associate to any such separated graph $(E,C)$ a complex *-algebra
$L(E,C)$, called the \emph{Leavitt path algebra} of the separated
graph $(E,C)$. We may define the C*-algebra $C^*(E,C)$ of the
separated graph $(E,C)$ as the universal C*-envelope of $L(E,C)$. We also introduce a reduced version, denoted $\Cstred(E,C)$, in the
case that $(E,C)$ is \emph{finitely separated}, meaning that the
partitions in $C$ consist of finite sets. To glimpse the differences
and similarities between the full and reduced graph C*-algebras, let
us mention the following facts. When we consider a separated graph
$(E,C)$ with just one vertex and the sets in the partition $C$ are
reduced to singletons, the full graph C*-algebra $C^*(E,C)$ is just
the full group C*-algebra $C^*(\mathbb F )$ of a free group $\mathbb
F$ of rank $|E^1|$, while the reduced graph C*-algebra is precisely
the reduced group C*-algebra $C^*_r (\mathbb F)$. On the other hand,
when we deal with a {\it trivially separated graph} $(E,C)$ (meaning
that for each non-sink $v\in E^0$, the partition $C_v$ consists of
the single set $s^{-1}(v)$), then both the full graph C*-algebra
$C^*(E,C)$ and the reduced graph C*-algebra $\Cstred(E,C)$ coincide
with the usual graph C*-algebra $C^*(E)$ (Theorem
\ref{thm:red-prod-inj}(2)). In general, the behaviors of the full and
reduced graph C*-algebras are quite different, as suggested by the
free group C*-algebra example above. We consider specific examples
in Section \ref{sect:idealsinCred}, for which we show that the
reduced graph C*-algebra is simple, including in particular algebras
closely related to the C*-algebras considered by Brown and
McClanahan, see \cite{Brown, McCla1, McCla2, McCla3}. Indeed, as we
show in Section \ref{sect:relMcClan}, our examples (both reduced and
full) are Morita-equivalent to ones considered in the abovementioned
papers.

We also compute, using a result of Thomsen, the $K$-theory of the
full graph C*-algebras of finitely separated graphs $(E,C)$, obtaining a formula
that very much resembles the one known for ordinary graph
C*-algebras, as stated for instance in \cite[Theorem  3.2]{RS}. Namely, $K_0(C^*(E,C))$ and $K_1(C^*(E,C))$ are the cokernel and kernel of a map between free abelian groups given by an identity minus an adjacency matrix associated to $(E,C)$ (see Theorem  \ref{thm:KTHsepgraph}).

An important ingredient in our work is the construction of a
canonical {\it faithful} conditional expectation $C^*(E)\to
C_0(E^0)$ for any row-finite graph $E$ (see Section
\ref{sect:cond-expecs}).

\begin{subsec}
{\bf Contents.} We now explain in more detail the contents of this
paper. The definitions of a separated graph $(E,C)$ and its Leavitt
path algebra $L(E,C)$ and full C*-algebra $C^*(E,C)$ are given in
Subsection \ref{background}. We construct  canonical faithful
conditional expectations $\Phi _E\colon C^*(E)\to C_0(E^0)$ for all row-finite graphs $E$ in
Section \ref{sect:cond-expecs}. The reduced graph C*-algebras
$\Cstred(E,C)$ are introduced in Section \ref{Cstar}, based on the conditional expectations constructed in the previous section. Here we make
use of the theory of full and reduced amalgamated free products of
C*-algebras (\cite{voicu, VDN}). We show that the Leavitt path
algebra $L(E,C)$ embeds in the reduced graph C*-algebra $\Cstred
(E,C)$ (and thus also embeds in the full graph C*-algebra
$C^*(E,C)$), and that, for a trivially separated row-finite graph
$E$, we have $C^*(E)\cong \Cstred (E)$ canonically (Theorem
\ref{thm:red-prod-inj}). We also exhibit a family of closed ideals
of $C^*(E,C)$, parametrized by the lattice $\mathcal H$ of
hereditary $C$-saturated subsets of $E^0$ (Corollary
\ref{Mlattinclusion}). We show simplicity of the reduced graph
C*-algebras $\Cstred (E,C)$ for various families of finitely
separated graphs in Section \ref{sect:idealsinCred}, including the
separated graphs giving rise to C*-algebras analogous to the ones
considered by Brown and McClanahan in \cite{Brown, McCla1, McCla2,
McCla3}. We also show in Proposition \ref{prop:heredbreak} that
there are examples of finitely separated graphs $(E,C)$ for which
$\Cstred (E,C)$ is simple but the lattice of hereditary
$C$-saturated subsets of $E^0$ has more than two elements, so
$C^*(E,C)$ is not simple. This example also shows that the structure
of projections in the full and reduced graph C*-algebras can be
quite different. Section \ref{sect:K-theory} is devoted to the
computation of $K$-theory of full graph C*-algebras. We obtain a
quite satisfying formula in Theorem \ref{thm:KTHsepgraph}, using a
powerful result of Thomsen \cite[Theorem 2.7]{Thomsen}. This in
particular enables us to confirm a conjecture of McClanahan on the
$K$-theory of the C*-algebras $U^{\text{nc}}_{m,n}$. The exact
relationship of the reduced graph C*-algebras $\Cstred
(E(m,n),C(m,n))$ and the examples considered in \cite{McCla3} is
established in Section \ref{sect:relMcClan}. (See Example
\ref{ex:EmnCmn} for the definition of the separated graph
$(E(m,n),C(m,n))$.) By using this connection and some results in the
literature, we establish that $\Cstred (E(n,n),C(n,n))$, for $n>1$,
is a simple C*-algebra of stable rank one, with a unique tracial
state, and having minimal projections (Corollary \ref{cor:C*Enn}).
We end the paper with a discussion of open problems.
\end{subsec}

\smallskip

\begin{subsec}  \label{background}
{\bf Background definitions.}
Throughout, all graphs will be directed graphs of the form $E= (E^0,E^1,s,r)$, where $E^0$ and $E^1$ denote the sets of vertices and edges of $E$, respectively, and $s,r: E^1 \rightarrow E^0$ are the source and range maps. No cardinality restrictions are imposed on $E^0$ and $E^1$. We follow the convention of composing paths from left to right -- thus, a path in $E$ is given in the form $\alpha= e_1e_2 \cdots e_n$ where the $e_i\in E^1$ and $r(e_i)= s(e_{i+1})$ for $i<n$. The \emph{length} of such a path is $|\alpha| := n$. Paths of length $0$ are identified with the vertices of $E$.

\begin{definition} \label{defsepgraph} \cite[Definition 2.1]{AG}
A \emph{separated graph} is a pair $(E,C)$ where $E$ is a graph,  $C=\bigsqcup
_{v\in E^ 0} C_v$, and
$C_v$ is a partition of $s^{-1}(v)$ (into pairwise disjoint nonempty
subsets) for every vertex $v$. (In case $v$ is a sink, we take $C_v$
to be the empty family of subsets of $s^{-1}(v)$.)

If all the sets in $C$ are finite, we say that $(E,C)$ is a \emph{finitely separated} graph. This necessarily holds if $E$ is row-finite.

The set $C$ is a \emph{trivial separation} of $E$ in case $C_v= \{s^{-1}(v)\}$ for each $v\in E^0\setminus \Si(E)$. In that case, $(E,C)$ is called a \emph{trivially separated graph} or a  \emph{non-separated graph}. Any graph $E$ may be paired with a trivial separation and thus viewed as a trivially separated graph.
\end{definition}

The concept of a separated graph is related to that of an \emph{edge-colored graph}, that is, a pair $(E,f)$ where $E$ is a (directed) graph and $f:E^1 \to N$ is a function from $E^1$ to some set $N$. Given such a pair, set
$$C_v := \{ s^{-1}(v)\cap f^{-1}(n) \mid n\in N\text{\;and\;} s^{-1}(v)\cap f^{-1}(n) \ne\emptyset \}$$
for $v\in E^0$ and $C=\bigsqcup
_{v\in E^ 0} C_v$. Then $(E,C)$ is a separated graph. Conversely, given a separated graph $(E,C)$, the map $f: E^1 \rightarrow C$ such that $e\in f(e)$ for $e\in E^1$ is an edge-coloring of $E$. The general definition of an edge-coloring allows edges with different sources to receive the same color. However, no relations between such edges are imposed in the C*-algebras we construct.

\begin{definition} \label{def:LPASG} \cite[Definition 2.2]{AG}
For any separated graph $(E,C)$, the (\emph{complex}) \emph{Leavitt path algebra of
$(E,C)$} is the complex *-algebra $L(E,C)$ with generators $\{ v,
e\mid v\in E^0,\ e\in E^1 \}$, subject to the following relations:
\begin{enumerate}
\item[] (V)\ \ $vw = \delta_{v,w}v$ \ and $v=v^*$ \ for all $v,w \in E^0$ ,
\item[] (E)\ \ $s(e)e=er(e)=e$ \ for all $e\in E^1$ ,
\item[] (SCK1)\ \ $e^*f=\delta _{e,f}r(e)$ \ for all $e,f\in X$, $X\in C$, and
\item[] (SCK2)\ \ $v=\sum _{ e\in X }ee^*$ \ for every finite set $X\in C_v$, $v\in E^0$.
\end{enumerate}

\end{definition}

\begin{definition} \label{defC*sepgraph} The \emph{graph C*-algebra} of a separated graph $(E,C)$ is the C*-algebra $C^*(E,C)$  with generators $\{ v, e \mid v\in E^0,\
e\in E^1 \}$, subject to the relations (V), (E), (SCK1), (SCK2). In
other words, $C^*(E,C)$ is the enveloping C*-algebra of $L(E,C)$. This C*-algebra exists because the generating set consists of partial isometries.

In case $(E,C)$ is trivially separated, $C^*(E,C)$ is just the classical graph C*-algebra $C^*(E)$.

For $v\in E^0$ and $e\in E^1$, we use the same symbols $v$ and $e$
for the canonical images of $v$ and $e$ in $C^*(E,C)$. This allows
us to conveniently abbreviate various expressions -- for instance,
if $H\subseteq C^*(E,C)$, we can write $E^0\cap H$ for the set of
those $v\in E^0$ whose canonical images in $C^*(E,C)$ lie in $H$.

By definition, there is a unique
*-homomorphism  $L(E,C) \rightarrow
C^*(E,C)$ sending the generators of $L(E,C)$ to their canonical
images in $C^*(E,C)$. This
*-homomorphism will be called the \emph{canonical map from $L(E,C)$
to $C^*(E,C)$.}

The $C^*(E,C)$ construction also produces the \emph{C*-algebras of edge-colored graphs} introduced by Duncan \cite[Definition 6]{Dun} (although he only considers edge-colorings with natural number values). Since Duncan allows arrows with different sources to have the same color, his construction can produce the same algebra from many different edge-colorings of a given graph.
\end{definition}

In the present paper, we mostly restrict our attention to finitely separated graphs and their C*-algebras.

The natural category of finitely separated graphs is the category $\FSGr$ defined in \cite[Definition 8.4]{AG}. Its objects are all finitely separated graphs $(E,C)$. A morphism from $(F,D)$ to $(E,C)$ in $\FSGr$ is any graph morphism $\phi:F\rightarrow E$ such that
\begin{enumerate}
\item $\phi^0$ is injective.
\item For each $v\in F^0$ and each $X\in D_v$, there is some $Y\in C_{\phi^0(v)}$ such that $\phi^1$ induces a bijection $X\rightarrow Y$.
\end{enumerate}
Condition (2) does not imply that $\phi^1$ is injective, since it might map two different sets in $D_v$ to the same member of $C_{\phi^0(v)}$.

A \emph{complete subobject} of an object $(E,C)$ in $\FSGr$ is any object $(F,D)$ such that $F$ is a subgraph of $E$ and
\begin{enumerate}
\item[(3)] $D_v= \{Y\in C_v \mid Y\cap F^1 \ne \emptyset\}$ for all $v\in F^0$. (In particular, this requires that each set in $C$ which meets $F^1$ must be contained in $F^1$.)
\end{enumerate}
(This is the specialization of \cite[Definition 3.4]{AG} to
$\FSGr$.) Observe that indeed $(F,D)$ is a complete subobject of
$(E,C)$ if and only if $F$ is a subgraph of $E$ and $D$ is a subset
of $C$. In this case, the
inclusion $F\rightarrow E$ (that is, the pair of inclusions
$(F^0\rightarrow E^0,\, F^1\rightarrow E^1)$) is a morphism in
$\FSGr$.

Any morphism $\phi: (F,D) \rightarrow (E,C)$ in $\FSGr$ induces a
unique C*-algebra homomorphism $C^*(\phi): C^*(F,D) \rightarrow
C^*(E,C)$ sending
\begin{equation}  \label{natmapL}
v\longmapsto \phi ^0(v),\qquad\quad e\longmapsto \phi ^1(e)
 \end{equation}
for $v\in F^0$ and $e\in F^1$, since the elements $\phi^0(v)$,
$\phi^1(e)$ satisfy the defining relations of $C^*(F,D)$. The
assignments $(F,D) \mapsto C^*(F,D)$ and $\phi \mapsto C^*(\phi)$
define a functor $C^*(-)$ from $\FSGr$ to the category $\Cstaralg$
of C*-algebras. The argument of \cite[Proposition 3.6]{AG},
\emph{mutatis mutandis}, yields the following result:

\begin{proposition}  \label{FSGrC*functor}
The functor $C^*(-): \FSGr \rightarrow \Cstaralg$ is continuous.
\qed\end{proposition}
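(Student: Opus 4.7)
The plan is to mimic the argument of \cite[Proposition 3.6]{AG}, where the analogous statement for the Leavitt path algebra functor $L(-)$ was proved. ``Continuous'' here means that $C^*(-)$ preserves directed colimits (filtered colimits) in $\FSGr$. Since $\FSGr$ has directed colimits constructed essentially as in the algebraic setting of \cite{AG}, the task reduces to showing: for any directed system $\{(E_i,C_i),\phi_{ji}\}_{i\in I}$ in $\FSGr$ with colimit $(E,C)$ and canonical morphisms $\phi_i\colon(E_i,C_i)\to(E,C)$, the cocone $\{C^*(\phi_i)\colon C^*(E_i,C_i)\to C^*(E,C)\}$ is universal in $\Cstaralg$.

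First I would recall the description of the colimit. Because each morphism is injective on vertices, $E^0=\bigcup_i\phi_i^0(E_i^0)$ as a set, and $E^1$ is the corresponding colimit of the edge sets with $s,r$ induced componentwise. Each $X\in C_v$ arises as $\phi_i^1(X_i)$ for some $i$ and some (finite) $X_i\in (C_i)_{v_i}$ with $\phi_i^0(v_i)=v$, and the maps $\phi_i$ are morphisms of $\FSGr$ by construction. One checks this satisfies the universal property of a colimit in $\FSGr$ exactly as in \cite{AG}.

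Next, given any cocone $\{\psi_i\colon C^*(E_i,C_i)\to A\}$ in $\Cstaralg$, I would construct a *-homomorphism $\psi\colon C^*(E,C)\to A$ by defining it on generators: for $v\in E^0$, pick $i\in I$ and $v_i\in E_i^0$ with $\phi_i^0(v_i)=v$ and set $\psi(v):=\psi_i(v_i)$; likewise for edges. Well-definedness follows from the cocone condition combined with directedness: if two choices $(i,v_i)$ and $(j,v_j)$ both map to $v$, passing to $k\geq i,j$ gives $\phi_{ki}^0(v_i)=\phi_{kj}^0(v_j)$ (by injectivity of $\phi_k^0$), and then $\psi_i(v_i)=\psi_k\circ C^*(\phi_{ki})(v_i)=\psi_k\circ C^*(\phi_{kj})(v_j)=\psi_j(v_j)$. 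Each defining relation (V), (E), (SCK1), (SCK2) involves only finitely many generators, which by directedness all lie in the image of a single $\phi_i$; the relation holds in $C^*(E_i,C_i)$ and transfers to $A$ via $\psi_i$. In particular, a finite set $X\in C_v$ is the image under $\phi_i^1$ of a single finite set $X_i\in (C_i)_{v_i}$, so (SCK2) for $X$ reduces to (SCK2) for $X_i$. The universal property of $C^*(E,C)$ then produces the required *-homomorphism, and uniqueness is automatic since $\psi$ is determined on generators.

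The main technical point, and the only place where care is genuinely required, is the description of the partition $C$ at the colimit level together with the verification that every finite block of $C$ is exhausted by the image of a single $\phi_i^1$ rather than assembled across several indices. This is guaranteed by directedness of $I$ together with finiteness of the sets in $C$ (which holds because $(E,C)$ is finitely separated whenever each $(E_i,C_i)$ is). Once this is in place, the rest of the argument is formal and essentially identical to the one given in \cite[Proposition 3.6]{AG}.
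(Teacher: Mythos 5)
Your proposal is correct and follows essentially the same route the paper intends: the paper gives no separate argument but invokes the proof of \cite[Proposition 3.6]{AG} \emph{mutatis mutandis}, which is exactly the direct verification of the universal property you carry out (relations are finitary, each block of $C$ is the bijective image of a block at a finite stage, and directedness plus the cocone condition give well-definedness on generators). The only spot worth polishing is the well-definedness for edges: since $\phi^1$ need not be injective, one should appeal to the standard description of filtered colimits of sets (equality of images is witnessed at some finite stage) rather than to injectivity, which you only have for the vertex maps.
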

\end{subsec}

\section{The canonical conditional expectation}
\label{sect:cond-expecs}

In this section we define the canonical conditional expectation
$\Phi _E\colon C^*(E)\to C_0(E^0)$ for a row-finite directed graph $E$ and we
show its faithfulness.  We will use  these conditional
expectations (for various subgraphs) to define the reduced graph C*-algebra of a finitely separated
graph (see Section \ref{Cstar}). In the following, we identify the C*-algebra of the edgeless graph $(E^0,\emptyset)$ with the function algebra $C_0(E^0)$ on the discrete set $E^0$. Recall that the canonical *-homomorphism $C^*((E^0,\emptyset)) \rightarrow C^*(E)$ is an embedding (e.g., \cite[Theorem 2.1]{BHRS}). We thus identify $C_0(E^0)$ with the sub-C*-algebra of $C^*(E,C)$ generated by $E^0$.

\begin{theorem}  \label{3condexp}
Let $E$ be a row-finite graph. Then there exists a unique
conditional expectation
$$\Phi_E\colon C^*(E)\longrightarrow C_0(E^0)$$
such that, for all paths $\gamma, \nu$ in $E$, we have
\begin{equation}  \label{basicPhi}
\Phi_E(\gamma\nu^*)= \begin{cases}  0  &(\text{if\ } \gamma\ne\nu)\\
\biggl( \prod_{i=1}^n |s^{-1}s(e_i)| \biggr)^{-1} s(\gamma)
&(\text{if\ } \gamma=\nu= e_1e_2\cdots e_n \text{\ for some\ }
e_i\in E^1). \end{cases}
\end{equation}
Moreover the conditional expectation $\Phi _E$ is faithful.
\end{theorem}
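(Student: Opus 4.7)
The plan is to construct $\Phi_E$ as a composition of two faithful conditional expectations: the gauge conditional expectation $F_E \colon C^*(E) \to C^*(E)^{\mathbb{T}}$ onto the AF core, followed by a diagonal conditional expectation $G$ from the core to $C_0(E^0)$ defined explicitly on matrix units. Uniqueness is automatic, since the formula \eqref{basicPhi} determines any candidate on the dense $*$-subalgebra spanned by monomials $\gamma\nu^*$, so two candidates must agree everywhere by continuity.

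For $F_E$, I would invoke the standard gauge action $\alpha_z(e)=ze$, $\alpha_z(v)=v$ of $\mathbb{T}$ on $C^*(E)$ and set $F_E(a)=\int_{\mathbb{T}}\alpha_z(a)\,dz$; this is a faithful conditional expectation onto the AF fixed-point subalgebra $F:=C^*(E)^{\mathbb{T}}$, sending $\gamma\nu^*\mapsto\gamma\nu^*$ when $|\gamma|=|\nu|$ and to $0$ otherwise. To build $G$, filter $F=\overline{\bigcup_n F_n}$, where $F_n$ is the linear span of monomials $\alpha\beta^*$ with $|\alpha|=|\beta|=n$; each $F_n$ is a direct sum of matrix algebras indexed by pairs of length-$n$ paths sharing a common range, and the inclusion $F_{n-1}\hookrightarrow F_n$ is induced by $\gamma\nu^* = \sum_{e\in s^{-1}(r(\gamma))} (\gamma e)(\nu e)^*$. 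Define
\[
G_n(\alpha\beta^*):= \delta_{\alpha,\beta}\,\lambda_\alpha\, s(\alpha),\qquad \lambda_\alpha:=\prod_{i=1}^{|\alpha|}|s^{-1}s(e_i)|^{-1}.
\]
On each matrix block this is a weighted-diagonal functional into $\mathbb{C}s(\alpha)\subseteq C_0(E^0)$, hence completely positive. Compatibility $G_n|_{F_{n-1}}=G_{n-1}$ is immediate off-diagonal, and in the diagonal case reduces to the telescoping identity $\sum_{e\in s^{-1}(r(\alpha))}|s^{-1}(r(\alpha))|^{-1}\lambda_\alpha = \lambda_\alpha$. The $G_n$ therefore assemble into a conditional expectation $G\colon F\to C_0(E^0)$, and setting $\Phi_E:=G\circ F_E$ yields \eqref{basicPhi} by direct inspection.

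Faithfulness follows from the construction: $F_E$ is faithful because $z\mapsto\alpha_z(a^*a)$ is a continuous positive-valued function whose integral equals $F_E(a^*a)$, and $G$ is faithful on positive elements because each weight $\lambda_\alpha$ is strictly positive, so on each matrix block the map $A\mapsto\sum_\alpha\lambda_\alpha A_{\alpha\alpha}$ is a faithful state. The composition is then faithful as well: if $a\ge 0$ with $\Phi_E(a)=0$, then $F_E(a)\ge 0$ and $G(F_E(a))=0$, so $F_E(a)=0$, hence $a=0$.

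The main place I would expect to have to be careful is the treatment of sinks: when $r(\alpha)$ is a sink, the CK2 relation fails there, so $\alpha\alpha^*$ is not further subdivided in the AF filtration; one must simply declare $G_m(\alpha\alpha^*)=\lambda_\alpha s(\alpha)$ for all $m\ge|\alpha|$, which is consistent but needs to be spelled out. Row-finiteness ensures that each weight $\lambda_\alpha$ is a well-defined positive rational (the products only involve sources of edges, which are non-sinks by definition), so no convergence issues arise at the finite level, and the extension to the closure is routine.
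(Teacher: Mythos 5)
Your construction is, in outline, the paper's own: the gauge expectation onto the AF core followed by a weighted-diagonal expectation, with the weights $\lambda_\alpha=\prod_i|s^{-1}s(e_i)|^{-1}$ and with the sink-adjusted filtration handled exactly as the paper handles it (its spaces $E^r$ include shorter paths ending in sinks). The existence part, the telescoping compatibility identity, the treatment of sinks, and the uniqueness argument are all fine; the only structural difference is that you merge into a single map $G$ what the paper factors as $\Phi_3\circ\Phi_2$, where $\Phi_2$ is the expectation of the core onto its diagonal (Cartan) subalgebra $D\cong C_0(E^\infty)$ and $\Phi_3\colon D\to C_0(E^0)$ is the weighted average.

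The genuine gap is the faithfulness of $G$. Your justification --- ``each weight $\lambda_\alpha$ is strictly positive, so on each matrix block the map is a faithful state'' --- only proves that each $G_n$ is faithful on the finite-dimensional algebra $F_n$; faithfulness does not pass automatically to the inductive limit $F=\overline{\bigcup_n F_n}$. If $a\ge 0$ in $F$ satisfies $G(a)=0$, approximating $a$ by a positive element of some $F_n$ only gives an element whose $G$-value is small, not zero, and since the weights $\lambda_\alpha$ tend to $0$ as paths lengthen there is no uniform lower bound that lets the approximation argument close; likewise, after reducing to projections (the cone $\{a\ge 0:G(a)=0\}$ spans a hereditary subalgebra of the AF algebra $F$, hence contains a projection if nonzero), approximating a projection $p\in F$ by a projection $q\in F_n$ with $p=uqu^*$, $u$ close to $1$, only yields an estimate of the form $\phi(G(p))\ge\phi(G(q))-2\|u-1\|\,\phi(G(q))^{1/2}$, which is inconclusive because $\phi(G(q))$ may be far smaller than $\|u-1\|$; and there are no obvious compatible expectations $F\to F_n$ intertwining $G$ that would let you localize as in the tensor-product situation. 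This is precisely where the paper invests its real work: it proves faithfulness of the expectation $F\to D$ onto the commutative diagonal by invoking Renault's theorem together with amenability of AF groupoids, and then proves faithfulness of $\Phi_3$ on the \emph{commutative} algebra $D$ using that every nonzero projection of $D$ actually dominates a cylinder projection $\chi_{U(\lambda)}$ from a finite stage (so $\epsilon p\le a$ and $\Phi_3(p)\ne 0$) --- an argument that has no analogue for projections in the noncommutative core. Some structural input of this kind (Renault/Cartan masas, or the known faithfulness of the expectation of an AF algebra onto its canonical diagonal) is needed; as written, your proposal asserts rather than proves the hardest point of the theorem.
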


\begin{proof} Uniqueness is clear in case of existence.

Let $E$ be a row-finite graph. The map $\Phi_E$ will be defined as
the composition of three maps: $\Phi _E=\Phi_3\circ \Phi _2\circ
\Phi _1$, each of which is a faithful conditional expectation.
The first of these maps is the canonical conditional expectation
$\Phi _1\colon C^*(E)\to C^*(E)^{\alpha}$, where $\alpha \colon
\T\to \text{Aut}(C^*(E))$ is the gauge action (e.g., \cite[p.~1161]{BHRS}) and $C^*(E)^{\alpha}$
is the fixed point C*-algebra, which is the AF-subalgebra of
$C^*(E)$ generated by all the paths $\alpha \beta^*$, where $\alpha,
\beta $ are finite paths in $E$ such that $r(\alpha)=r(\beta)$ and
$|\alpha|=|\beta |$. The conditional expectation $\Phi _1$ is
faithful by \cite[Proposition 3.2]{Raeburn}.

The second conditional expectation $\Phi _2$ appearing in the
definition of $\Phi _E$ is the unique conditional expectation $\Phi
_2\colon C^*(E)^{\alpha}\to D$ from the AF-algebra $C^*(E)^{\alpha}$
to its canonical Cartan subalgebra $D$, where $D$ is the commutative
diagonal AF-algebra generated by $\lambda \lambda ^*$, $\lambda \in
E^*$. Indeed, since every AF-groupoid is amenable (\cite[Remark
III.1.2]{Renault}), it follows from \cite[Theorem II.4.15]{Renault}
that $D$ is the image of a unique conditional expectation $\Phi
_2\colon C^*(E)^{\alpha}\to D$, which is faithful. Observe that
$\Phi_2(\lambda \nu^*)=0$ if $|\lambda |=|\nu |$ and $\lambda \ne
\nu$. Indeed, since $\lambda\nu^* = (\lambda\lambda^*)(\lambda\nu^*)(\nu\nu^*)$, we have $\Phi_2(\lambda\nu^*) = (\lambda\lambda^*) \Phi_2(\lambda\nu^*) (\nu\nu^*) = 0$ because $D$ is commutative and $\lambda^*\nu=0$.

Finally, we are going to define the third conditional expectation
$\Phi_3$, from the commutative C*-algebra $D$ to its C*-subalgebra
$C_0(E^0)$. For this we need an explicit description of $D$. For $0\le r\le \infty$, let $E^r$ be the set of
(forward) paths in $E$ of length $r$, together with all paths of
length $\le r$ ending in a sink. We have truncation maps $\tau
_{r,s}\colon E^s\to E^r$, $\gamma \mapsto \gamma[r]$, for $r\le s\le
\infty$, where the truncation $\gamma[r]$ of $\gamma = e_1e_2\cdots$
is $e_1e_2 \cdots e_r$ (with $\gamma [0]=s(\gamma)$ and $\gamma
[r]=\gamma$ if $\gamma$ is a path of length $\le r$ ending in a
sink). For $r<\infty$, we put on $E^r$ the discrete topology.

Observe that $E^{\infty}$ is precisely the projective limit of the
inverse system
\begin{equation*}
\begin{CD}
\cdots @>\tau_{r,r+1}>> E^r @>\tau_{r-1,r}>> E^{r-1}
@>\tau_{r-2,r-1}>> \cdots @>\tau_{1,2}>> E^1 @>\tau_{0,1}>> E^0.$$
\end{CD}
\end{equation*}
We put on $E^{\infty}$ the inverse limit topology. A basis of
compact open sets for this topology is provided by the sets
$$U(\lambda)= \{\gamma \in E^{\infty}: \gamma [r]=\lambda\},$$
for $\lambda \in E^r$, $0\le r<\infty$. The maps $\tau_{r,s}$ are
continuous, proper, and surjective, and clearly $\tau _{r,s}\tau_{s,t}=\tau
_{r,t}$ for $r\le s\le t$.

By \cite{KPRR} (see also \cite{KPR}), $D=C_0(E^{\infty})$. We have
$$D=C_0(E^{\infty})=\varinjlim C_0(E^r).$$
We next define a positive integer $n_{\lambda}$ for each finite path
$\lambda $ in $E$. If the length of $\lambda $ is zero, then we set
$n_{\lambda} :=1$. If $\lambda =e_1\cdots e_t$ is a path of positive
length, we set
$$n_{\lambda}:= \prod_{i=1}^{|\lambda|} |s^{-1}s(e_i)| \,.$$

Let $\Phi ^t \colon C_0(E^t)\rightarrow C_0(E^0)$ be the map defined
as follows:
\begin{equation} \label{eq:keyformPhi1}
\Phi ^t(f)(v)= \sum _{\lambda \in E^t,\, s(\lambda )=v}
\frac{1}{n_{\lambda}} \,f(\lambda) \,,
\end{equation}
for $f\in C_0(E^t)$ and $v\in E^0$.

Using that $\sum _{\lambda \in E^t, s(\lambda)=v}
\frac{1}{n_{\lambda}}=1$ for every $0\le t<\infty$ and every $v\in
E^0$, one can easily check that  $\Phi ^t$ is a positive,
contractive linear map, and clearly $\Phi ^t(f)=f$ for every $f\in
C_0 (E^0)$. By Tomiyama's Theorem (see e.g. \cite[Theorem
1.5.10]{BO}), we get that $\Phi ^t$ is a conditional expectation for
all $t\ge 0$. Note that $\Phi ^t$ is faithful for all $t$.

We check now that the conditional expectations $\Phi ^t$ are
compatible with the maps in the inductive system. Let $\iota
_{t+1,t} \colon C_0(E^t)\to C_0(E^{t+1} )$ be the natural inclusion
map. For $f\in C_0 (E^t)$ and $v\in E^0$, we have
\begin{align*}
\Phi ^{t+1}(\iota_{t+1,t}(f))(v) & = \sum _{\lambda \in E^{t+1},\,
s(\lambda )=v} \frac{1}{n_{\lambda}}\, f(\lambda[t]) \\
& = \sum_{\substack{\gamma\in E^t,\, |\gamma|=t\\ s(\gamma )=v,\, r(\gamma)\notin \Si (E)}}
\frac{|s^{-1}r(\gamma)|}{n_{\gamma}|s^{-1}r(\gamma)|}\, f(\gamma) +
\sum_{\substack{\gamma\in E^t,\, |\gamma|\le t\\ s(\gamma )=v,\, r(\gamma)\in \Si (E)}}
\frac{1}{n_{\gamma}} \, f(\gamma)\\
& =\Phi ^t (f)(v)\, ,
\end{align*}
which proves that $\Phi ^{t+1}(\iota _{i+1,i}(f))=\Phi ^t(f)$ for
$f\in C_0 (E^t)$, as desired.

Since every $\Phi ^t$ is contractive and positive, we conclude that
there is a unique contractive, positive linear map $\Phi _3\colon
C_0(E^{\infty})\to C_0(E^0)$ extending all $\Phi ^t$'s. This map is
therefore a conditional expectation from $D=C_0(E^{\infty})$ onto
$C_0(E^0)$. We now observe that $\Phi _3$ is faithful. Indeed since
$D$ is a commutative C*-algebra of real rank zero, given any
positive nonzero element $a$ in $D$, there are a positive real number
$\epsilon $ and a nonzero projection $p$ in $D$ such that $\epsilon
\cdot p\le a$. Since $\Phi _3(p)\ne 0$ for all nonzero projections
$p$ in $D$, it follows that $\Phi _3$ is faithful.

In conclusion, we have obtained three faithful conditional
expectations $\Phi _i$, $i=1,2,3$, with
$$\begin{CD} C^*(E) @>\Phi
_1>> C^*(E)^{\alpha} @>\Phi_2>> D @>\Phi _3>> C_0(E^0)
\end{CD}$$
and so $\Phi_E:=\Phi _3\circ \Phi _2\circ \Phi _1$ is a faithful
conditional expectation from $C^*(E)$ onto $C_0(E^0)$.

It remains to check (\ref{basicPhi}). Let $\gamma $ and $\nu$ be two
(finite) paths in $E$ with $r(\gamma)= r(\nu)$. If $|\gamma|\ne |\nu |$, then $\Phi
_1(\gamma\nu ^*)=0$ and thus $\Phi _E(\gamma \nu ^*)=0$. If $|\gamma
|=|\nu |$ but $\gamma \ne \nu$ then
$$\Phi_E (\gamma \nu ^*)= \Phi _3(\Phi_2(\Phi _1(\gamma \nu ^*)))=
\Phi _3 (\Phi _2 (\gamma \nu^*))=\Phi _3(0)=0.$$

Finally, if $\gamma= e_1\cdots e_t$ is a path of length $t$ in $E$, then
$\gamma\gamma ^*$ corresponds to the characteristic function of $\{
\gamma \}$ in $C_0(E^t)$, and thus we get from
(\ref{eq:keyformPhi1}) that $$\Phi_E (\gamma \gamma ^*)=\Phi
_3(\gamma \gamma^*)=\biggl( \prod_{i=1}^t |s^{-1}s(e_i)|
\biggr)^{-1} s(\gamma)\, ,$$ establishing (\ref{basicPhi}) also in
this case.
\end{proof}

\begin{definition}  \label{remCondexp}
If $E$ is a row-finite graph, we call the conditional expectation
$\Phi_E$ of Theorem \ref{3condexp} the \emph{canonical conditional
expectation from $C^*(E)$ to $C_0(E^0)$}.
\end{definition}

\section{C*-algebras of separated graphs}
\label{Cstar}

Assume that $(E,C)$ is a separated graph.
In this section, we develop a characterization of $C^*(E,C)$ as an
amalgamated free product of ordinary graph C*-algebras. This will enable us to define the reduced
graph C*-algebra $\Cstred(E,C)$ when $(E,C)$ is finitely separated. We will show that for a trivially separated row-finite graph $E$, the reduced graph C*-algebra agrees with
the non-reduced one.

Set $A_0=C_0(E^0)= C^*(E^0,\emptyset)$. For each $X\in C$, consider the graph C*-algebra
$A_X=C^*(E_X)$, where $E_X$ is the subgraph of $E$ with $(E_X)^0=E^0$
and $(E_X)^1=X$. We have natural $*$-homomorphisms
$$A_0\to A_X\to C^*(E,C)$$
arising from the inclusions $(E^0,\emptyset) \rightarrow E_X$ and $(E_X,\{X\}) \rightarrow (E,C)$.

Let $\calC$ be a category, and consider an object $C_0$ in
$\calC$ and a family  $(C_{\iota})_{\iota \in I}$    of objects in $\calC$,
with morphisms $f_{\iota}\colon C_0\to C_{\iota }$. Then the \emph{amalgamated
coproduct of $(C_{\iota})_{\iota \in I}$ over $C_0$} is an object $C$ in $\calC$, together with morphisms $g_{\iota}\colon C_{\iota}\to C$ such
that $g_{\iota} \circ f_{\iota }=g_{\iota '}\circ f_{\iota '}$ for
all $\iota ,\iota '\in I$, which are universal in the following
sense: Given any other family of
morphisms $h_{\iota }\colon C_{\iota }\to D$ such that $h_{\iota }\circ f_{\iota}=h_{\iota '}\circ
f_{\iota'}$ for all $\iota ,\iota '\in I$, there is a unique
$h\colon :C\to D$ such that $h_{\iota }=h\circ g_{\iota }$ for all
$\iota \in I$.

We now show that $C^*(E,C)$ is an amalgamated coproduct of the C*-algebras $C^*(E_X)$. This is the same idea (and proof) as in Duncan's Theorem 1 \cite{Dun}, except that we express $C^*(E,C)$ as a coproduct of smaller algebras (but more of them) than Duncan uses.

\begin{proposition}
\label{prop:amalg} Let $(E,C)$ be a separated graph, and
consider $A_0=C_0(E^0)$ and $A_X=C^*(E_X)$ as above. Then $C^*(E,C)$,
together with the natural $*$-homomorphisms $f_X\colon A_X\to
C^*(E,C)$, is the amalgamated coproduct of the family $(A_X)_{X\in C}$ over the
C*-algebra $A_0$ in the category $\Cstaralg$.
\end{proposition}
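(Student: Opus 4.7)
The plan is to verify the universal property directly, in the spirit of Duncan's Theorem 1 \cite{Dun} but with a finer amalgamation. The key observation is that the defining relations of $C^*(E,C)$ decompose naturally according to the partition $C$: relations (V) and (E) involve only vertices together with single edges; relation (SCK1) involves only edges $e,f$ belonging to the \emph{same} set $X\in C$; and (SCK2) involves only the edges of a single finite $X\in C_v$. Each such family of relations is already encoded in the individual graph C*-algebra $A_X=C^*(E_X)$, and the overlap between the $A_X$'s consists precisely of the common vertex subalgebra $A_0=C_0(E^0)$.

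First I would check that $(C^*(E,C),(f_X)_{X\in C})$ is a cocone under the diagram: for any $X,X'\in C$, both compositions $A_0\to A_X\to C^*(E,C)$ and $A_0\to A_{X'}\to C^*(E,C)$ send $v\in E^0$ to its canonical image in $C^*(E,C)$, so they agree on all of $A_0$.

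For the universal property, let $D$ be a C*-algebra and suppose we have $*$-homomorphisms $h_X\colon A_X\to D$ that all restrict to a common $*$-homomorphism $h_0\colon A_0\to D$. I would then define candidate images in $D$ by $V_v:=h_0(v)$ for $v\in E^0$ and $T_e:=h_X(e)$ for $e\in E^1$, where $X$ is the unique member of $C$ containing $e$. These elements satisfy (V) because $h_0$ is a $*$-homomorphism; they satisfy (E) because the relation $s(e)e=er(e)=e$ holds in $A_X$; they satisfy (SCK1) because for $e,f$ in a common $X$ the relation $e^*f=\delta_{e,f}r(e)$ is a defining relation of $A_X$; and they satisfy (SCK2) for each finite $X\in C_v$ because $v=\sum_{e\in X}ee^*$ holds in $A_X$. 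Hence by the universal property of $C^*(E,C)$ there is a unique $*$-homomorphism $h\colon C^*(E,C)\to D$ with $h(v)=V_v$ and $h(e)=T_e$. The required identities $h\circ f_X=h_X$ then hold by comparison on the generators $E^0\cup X$ of $A_X$, and any $h$ with this property is uniquely determined because $\bigcup_{X\in C}f_X(A_X)$ contains every generator of $C^*(E,C)$.

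The main (and only mild) obstacle is keeping straight which relations live in which $A_X$: the argument works precisely because $C^*(E,C)$ imposes no (SCK1) relation on edges lying in different members of $C$, which is exactly what justifies amalgamating over $A_0$ alone rather than over a larger subalgebra.
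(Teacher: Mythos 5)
Your proposal is correct and follows essentially the same route as the paper: verify the cocone property, observe that the images $h_0(v)$ and $h_X(e)$ satisfy (V), (E), (SCK1), (SCK2) in $D$, and invoke the universal property of $C^*(E,C)$ to obtain the unique $*$-homomorphism $h$ with $h\circ f_X=h_X$. Your added remarks on how the relations distribute among the $A_X$ and on uniqueness of $h$ simply make explicit what the paper leaves implicit.
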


\begin{proof}
We have to verify the universal property, so for $X\in C$ let
$h_X\colon A_X\to D$ be a $*$-homomorphism from $A_X$ to a
C*-algebra $D$ such that all compositions $A_0\to A_X\to D$ give the
same map $h_0$. We then have a family $(h_0(v))_{v\in E^0}$ of
orthogonal projections in $D$ and a family $(h_X (e))_{e\in X}$ of
partial isometries in $D$ for each $X\in C$, satisfying the
relations (V), (E), (SCK1), (SCK2). By the universal property of
$C^*(E,C)$, it follows that there exists a unique $*$-homomorphism
$h\colon C^*(E,C)\to D$ such that $h(v)=h_0(v)$ for all $v\in E^0$
and $h(e)=h_X(e)$ for all $e\in X$, for any $X\in C$. It follows
that $h\circ f_X=h_X$ for all $X\in C$, and so $C^*(E,C)$ is the
amalgamated coproduct of the family $(A_X)_{X\in C}$ over $A_0$.
\end{proof}

\begin{remark}
\label{rem:forrings} The same proof as above shows that $L(E,C)$ is
the amalgamated coproduct of the family $(L(E_X))_{X\in C}$ over the
$*$-algebra $L_0=\bigoplus_{v\in E^0} \C v$, in the category of
complex $*$-algebras.
\end{remark}

\begin{definition}  \label{def:redamprod}
Voiculescu defined in \cite{voicu} the \emph{reduced amalgamated
product} of a nonempty family $(A_{\iota}, \Phi _{\iota})_{\iota \in
I}$ of unital C*-algebras containing a unital subalgebra $A_0$
with conditional expectations $\Phi _{\iota}\colon A_{\iota}\to
A_0$. The reduced amalgamated product $(A,\Phi)$ is uniquely
determined by the following conditions:
\begin{enumerate}
\item $A$ is a unital C*-algebra, and there are unital $*$-homomorphisms
$\sigma _{\iota}\colon A_{\iota} \to A$ such that $\sigma_{\iota}|
_{A_0}= \sigma_{\iota'}|_{A_0}$ for all $\iota,\iota '\in I$.
Moreover the map $\sigma_{\iota}| _{A_0}$ is injective and we
identify $A_0$ with its image in $A$ through this map.
\item $A$ is generated by $\bigcup_{\iota \in I}
\sigma_{\iota}(A_{\iota})$.
\item $\Phi\colon A\to A_0$ is a
conditional expectation such that $\Phi \circ \sigma_{\iota}= \Phi
_{\iota}$ for all $\iota \in I$.
\item For $(\iota _1,\dots ,\iota _n)\in \Lambda (I)$ and $a_j\in \ker{\Phi_{\iota _j}}$
we have $\Phi (\sigma_{\iota _1}(a_1)\cdots \sigma _{\iota
_n}(a_n))=0$. Here, $\Lambda (I)$ denotes the set of all finite tuples
$(\iota _1,\dots ,\iota _n) \in \bigsqcup_{n=1}^\infty I^n$ such that $\iota _i\ne \iota
_{i+1}$ for $i=1,\dots ,n-1$.
\item If $c\in A$ is such that $\Phi (a^*c^*ca)=0$ for all $a\in A$,
then $c=0$.
\end{enumerate}

The \emph{full amalgamated product} $\bgast_{A_0}\, A_{\iota}$ is by
definition the amalgamated coproduct of the family $(A_{\iota})_{\iota \in I}$
over $A_0$ in the category of unital C*-algebras. By (1), there is
a unique $*$-homomorphism $\sigma \colon \bgast_{A_0}\, A_{\iota}\to A$
such that $\sigma_{\iota}=\sigma \circ f_{\iota}$ for all $\iota\in I$, where
$f_{\iota}\colon A_{\iota}\to \bgast_{A_0}\, A_{\iota}$ are the canonical
maps, and by (2) this map is surjective. We also have a canonical
map $\bgast^{\text{alg}}_{A_0}\, A_{\iota}\to
\bgast_{A_0}\, A_{\iota}$, where $\bgast^{\text{alg}}_{A_0}\, A_{\iota}$ denotes the
\emph{algebraic} amalgamated free product (which is the amalgamated coproduct of
$(A_{\iota})_{\iota \in I}$ over $A_0$ in the category of unital $\C$-algebras).

We now briefly recall the construction in \cite{voicu}. Let
$M_{\iota}=L^2(A_{\iota}, \Phi _{\iota})$ be the Hilbert
$A_0$-bimodule given by the GNS-construction, where the action of
$A_0$ on the left is given by restricting to $A_0$ the canonical
action of $A_{\iota}$ on $M_{\iota}$. We have $M_{\iota}= A_0\oplus
M_{\iota}^0$ as a Hilbert bimodule, and the Hilbert $A_0$-module
$M$ is defined by
$$M=A_0\oplus \bigoplus_{(\iota_1,\dots ,\iota _n)\in \Lambda(I)}
M^0_{\iota_1}\otimes _{A_0}\cdots \otimes _{A_0} M^0_{\iota_n}.$$

There are representations $\lambda _{\iota}\colon A_{\iota}\to
\calL (M)$ corresponding to the action of $A_\iota$ on terms with left hand factor $M_\iota$, see for instance \cite{voicu}, \cite{ivanov}, and
$\bgast_{A_0}^{\text{red}}\, A_{\iota}$ is defined as the C*-subalgebra
of $\calL (M)$ generated by $\cup _{\iota \in I} \lambda
_{\iota} (A_{\iota})$. We have a cyclic vector $\xi:=1_{A_0}$ in $M$
such that $a\cdot \xi=\hat{a}$ for all $a\in A_{\iota}$, where
$\hat{a}$ denotes the copy of $a\in A_{\iota}$ in
$M_{\iota}\subseteq M$.

If all the kernels of the GNS representations are $0$, then the maps
$\sigma _{\iota}$ are isometries, and we can identify each
$A_{\iota}$ with its image in $A$.
\end{definition}

\begin{subsec} \label{C*redprep}
{\bf Preparation}.
We are now going to define the reduced graph C*-algebra
$\Cstred(E,C)$ of the finitely separated graph $(E,C)$. For
a C*-algebra $A$, we will denote by $\Atil$ the minimal
unital C*-algebra containing $A$, that is the subalgebra of the
multiplier algebra $M(A)$ of $A$ generated by $A$ and $1_{M(A)}$.

Set $B_0=\Atil_0$, and $B_X=\Atil_X$ for $X\in C$, where, as above,
$A_0=C_0(E^0)$ and $A_X=C^*(E_X)$. Then the canonical conditional
expectation $\Phi _X := \Phi_{E_X} \colon A_X\to A_0$ constructed in
Section \ref{sect:cond-expecs} extends canonically to a conditional
expectation $\Phi _X\colon B_X\to B_0$ (see e.g. \cite[Proposition
2.2.1]{BO}). Since $\Phi _X\colon A_X\to A_0$ is faithful, it follows that its extension to $B_X$ is also
faithful. Now we consider the reduced amalgamated product $(B,
\Phi)$ of the family $(B_X, \Phi _X)_{X\in C}$. Since all the
conditional expectations $\Phi _X$ are faithful, it follows from
\cite[Theorem 2.1]{ivanov} that the canonical conditional
expectation $\Phi \colon B\to \Atil_0$ is faithful.
\end{subsec}

\begin{definition}
\label{def:redCstar} Let $(E,C)$ be a finitely separated graph, and
let $A_0,B_0, A_X, B_X$ be as defined above, for $X\in C$. Consider
the reduced amalgamated product $(B, \Phi)$ of the family $(B_X,
\Phi _X)_{X\in C}$. Then the \emph{reduced graph C*-algebra}
$\Cstred(E,C)$ is the C*-subalgebra of $B$ generated by
$\bigcup _{X\in C}A_X$ in $B$ (where we identify each $A_X$ with its
isometric image in $B$). Observe that there is a faithful canonical
conditional expectation $\Phi \colon \Cstred(E,C)\to A_0$, such that $\Phi|_{A_X}= \Phi_X$ for all $X\in C$.

As with $C^*(E,C)$ (cf.~Definition \ref{defC*sepgraph}), we use the same symbols to denote vertices and edges of $E$ as for their canonical images in $\Cstred(E,C)$.
\end{definition}

We do not address here the question of extending Definition \ref{def:redCstar} to a functor from $\FSGr$ to $\Cstaralg$. However, several natural maps related to this possible functor will be needed, as follows.

First, given a finitely separated graph $(E,C)$, observe that the natural images in $\Cstred(E,C)$ of the vertices and edges of $E$ satisfy the defining relations of the $\C$-algebra $L(E,C)$. Hence, there is a unique *-homomorphism $L(E,C) \rightarrow \Cstred(E,C)$ that sends all vertices and edges of $E$ to their canonical images in $\Cstred(E,C)$. We refer to this map as the \emph{canonical map} from $L(E,C)$ to $\Cstred(E,C)$. For the same reason, we obtain a \emph{canonical map} $C^*(E,C) \rightarrow \Cstred(E,C)$, and the canonical map $L(E,C) \rightarrow \Cstred(E,C)$ is the composition of the canonical maps $L(E,C) \rightarrow C^*(E,C) \rightarrow \Cstred(E,C)$.

Next, suppose that $E$ is a row-finite graph, viewed as a trivially separated graph $(E,C)$ where $C_v= \{s^{-1}(v)\}$ for all $v\in E^0\setminus \Si(E)$. We then define $\Cstred(E) := \Cstred(E,C)$. From the previous paragraph, we obtain a \emph{canonical map} $C^*(E) \rightarrow \Cstred(E)$. We prove in Theorem \ref{thm:red-prod-inj} that this map is an isomorphism.

The final canonical map we require is given in the following lemma.

\begin{lemma}
\label{lem:preparingmain1} Let $(F,D)$ be a complete subobject of an object
$(E,C)$ in $\FSGr$, such that $E^0=F^0$. Then there is a natural embedding of $\Cstred(F,D)$ into $\Cstred(E,C)$ such that $E^0\cap \Cstred(F,D)$ and $E^1\cap \Cstred(F,D)$
are sent to their natural images in $E^0\cap \Cstred(E,C)$ and $E^1\cap \Cstred(E,C)$.
\end{lemma}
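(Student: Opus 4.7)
The plan is first to embed the reduced amalgamated free product $B^F$ used in the definition of $\Cstred(F,D)$ canonically into the corresponding $B^E$ for $\Cstred(E,C)$, and then to restrict to the generated sub-C*-algebras. Write $\sigma_X^E\colon B_X \to B^E$ for the canonical maps and $\Phi^E$ for the distinguished faithful conditional expectation, and similarly for the $F$-version. Since $E^0=F^0$ and $(F,D)$ is a complete subobject of $(E,C)$, we have $D \subseteq C$ and $A_0^F = A_0^E = C_0(E^0)$. For each $Y \in D$, the subgraphs $F_Y$ and $E_Y$ coincide (same vertex set $E^0$, same edge set $Y$), so $A_Y^F = A_Y^E$ and the canonical conditional expectations $\Phi_Y\colon A_Y \to A_0$ from Section \ref{sect:cond-expecs} agree in both settings. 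Thus the unitized data $(B_Y, \Phi_Y)$ and the base algebra $B_0$ are identical in the two constructions; only the index set differs.

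The main step is to realize $B^F$ as a sub-C*-algebra of $B^E$. Let $B' \subseteq B^E$ be the C*-subalgebra generated by $\bigcup_{Y \in D} \sigma_Y^E(B_Y)$, and set $\Phi' := \Phi^E|_{B'}$ and $\sigma'_Y := \sigma_Y^E|_{B_Y}$ for $Y \in D$. I claim that $(B', \Phi')$ together with the maps $\sigma'_Y$ satisfies Voiculescu's characterizing conditions (1)--(5) of Definition \ref{def:redamprod} for the family $(B_Y, \Phi_Y)_{Y \in D}$ over $B_0$. Conditions (1), (2), (3) are routine: $B'$ is unital, the $\sigma'_Y$ are unital $*$-homomorphisms with common restriction to $B_0$, and $\Phi' \circ \sigma'_Y = \Phi_Y$. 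Condition (4) is inherited from $B^E$ because $\Lambda(D) \subseteq \Lambda(C)$. Condition (5) follows immediately from the faithfulness of $\Phi^E$ recorded in Preparation \ref{C*redprep}: if $c \in B'$ satisfies $\Phi'(a^*c^*ca)=0$ for all $a\in B'$, then specializing to $a=1$ yields $\Phi^E(c^*c)=0$, hence $c=0$. By the uniqueness clause in Voiculescu's construction, there is a unique $*$-isomorphism $\psi\colon B^F \to B'$ intertwining $\sigma_Y^F$ and $\sigma'_Y$ for every $Y\in D$; composing with the inclusion $B' \hookrightarrow B^E$ produces the desired canonical embedding $B^F \hookrightarrow B^E$.

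To conclude, restrict $\psi$ to the sub-C*-algebra $\Cstred(F,D) \subseteq B^F$ generated by $\bigcup_{Y \in D} A_Y$. By construction, $\psi$ sends these generators into $\bigcup_{Y \in D} \sigma_Y^E(A_Y) \subseteq \Cstred(E,C)$, so $\psi$ restricts to an injective $*$-homomorphism $\Cstred(F,D) \hookrightarrow \Cstred(E,C)$. For each $v\in F^0 = E^0$ the canonical image of $v$ lies inside every $A_Y$, and $\psi$ sends it to the canonical image of $v$ in $\Cstred(E,C)$; likewise each edge $e \in F^1 \subseteq E^1$ lies in some $Y\in D$ and is sent by $\psi$ to its canonical image in $\Cstred(E,C)$, which gives the claimed compatibility. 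The only non-formal point in the argument is Voiculescu's condition (5), and it dissolves immediately into the faithfulness of $\Phi^E$ established in Preparation \ref{C*redprep}, so no substantive obstacle remains.
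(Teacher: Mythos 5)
Your proposal is correct and follows essentially the same route as the paper: identify the data $(B_Y,\Phi_Y)_{Y\in D}$ with the corresponding data inside the construction for $(E,C)$, verify that the C*-subalgebra of the big reduced amalgamated product generated by the $D$-indexed pieces, with the restricted expectation, satisfies Voiculescu's conditions (1)--(5) (with (5) coming from faithfulness of $\Phi^E$ and (4) inherited since $\Lambda(D)\subseteq\Lambda(C)$), and then invoke uniqueness of the reduced amalgamated product and restrict to the subalgebras generated by the $A_Y$. The paper phrases this with the concrete representations $\lambda_X$ on the Hilbert modules $M$, $M'$, but the argument is the same.
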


\begin{proof} Write $A_0=C_0(E^0)=C_0(F^0)$, and denote by
$M$ and $M'$ the Hilbert $A_0$-modules corresponding to $(E,C)$ and
$(F,D)$ respectively. For $X\in D$, let $\lambda '_X\colon
\Atil_X\to \calL (M')$ be the canonical representation of
$\Atil_X$ on $M'$, and for $Y\in C$, let $\lambda _Y\colon
\Atil_Y\to \calL (M)$ be the canonical representation of
$\Atil_Y$ on $M$.

Let $B$ be the C*-subalgebra of $\Cstred(E,C)$ generated by
$\bigcup _{X\in D} \lambda_X(\Atil_X)$, and let $\Phi _B\colon B\to
A_0$ denote the restriction of $\Phi_{(E,C)}\colon \Cstred(E,C)\to
A_0$ to $B$. Note that $(B, \Phi _B)$ satisfies conditions (1)--(5)
of Definition \ref{def:redamprod} with respect to the family
$(\Atil_X, \Phi _X)_{X\in D}$ ((5) is trivially satisfied because
$\Phi _{(E,C)}$ is faithful). Since these properties determine the
reduced amalgamated product, we obtain an isomorphism $\varphi
\colon (\Cstred(F,D))^{\sim} \to B$ such that $\Phi _B\circ \varphi
=\Phi _{(F,D)}$ and $\varphi \lambda _X'=\lambda _X$ for all $X\in
D$. It follows that $\varphi$ restricts to an isomorphism from
$\Cstred(F,D)$ to the C*-subalgebra of $\Cstred(E,C)$ generated
by $\bigcup _{X\in D} \lambda _X (A_X)$.
\end{proof}

The proof of the following lemma is straightforward.

\begin{lemma}
\label{lem:diffrverices} Assume that $(F,D)$ is a complete subobject
of an object $(E,C)$ in $\FSGr$, such that $E^1=F^1$ and $C=D$. Then
$$\Cstred(E,C) \cong \Cstred(F,D)\times
C_0(E^0\setminus F^0). \qquad\qquad \square$$
\end{lemma}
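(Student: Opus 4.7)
Write $W := E^0 \setminus F^0$. The plan is to first observe that every vertex in $W$ is isolated in $E$: since $F$ is a subgraph of $E$ with $F^1 = E^1$, all edges of $E$ have source and range in $F^0$ (this is consistent with the hypothesis $C=D$, which forces $C_v = \emptyset$ for $v \in W$). Consequently, for every $X \in C = D$ the subgraph $E_X$ is just $F_X$ together with the edgeless graph on $W$, so
\[
A_X = C^*(E_X) \cong C^*(F_X) \oplus C_0(W),
\]
and, by the formula (\ref{basicPhi}), the canonical conditional expectation $\Phi_X = \Phi_{E_X}$ splits as $\Phi_{F_X} \oplus \id_{C_0(W)}$ with respect to this decomposition.

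From this I would deduce that each $v \in W$ is a central projection in every $\Atil_X$ that is fixed by $\Phi_X$. Since the ambient reduced amalgamated product $B$ is generated as a C*-algebra by the images of the $\Atil_X$'s, $v$ is then central in $B$, hence also in $\Cstred(E,C)$, and satisfies $v \cdot e = 0$ for every $e \in E^1$ and $v \cdot w = \delta_{v,w} v$ for every $w \in E^0$. Therefore the net $p_{\Omega} := \sum_{v \in \Omega} v$, indexed by the finite subsets $\Omega$ of $W$, converges strictly to a central projection $p$ of $M(\Cstred(E,C))$ with $p\,\Cstred(E,C) = C_0(W)$, yielding the direct product decomposition
\[
\Cstred(E,C) \;=\; C_0(W) \,\times\, J, \qquad J := (1-p)\,\Cstred(E,C).
\]

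The remaining task is to identify $J$ with $\Cstred(F,D)$, which I would do via the uniqueness clause of Definition \ref{def:redamprod}. Restricting the canonical faithful conditional expectation $\Phi \colon \Cstred(E,C) \to C_0(E^0)$ to $J$ gives a faithful conditional expectation $\Phi|_J \colon J \to C_0(F^0)$. Observe that $J$ is generated by $\bigcup_{X \in D}(1-p)A_X = \bigcup_{X \in D} C^*(F_X)$; that $\Phi|_J$ restricts to $\Phi_{F_X}$ on each $C^*(F_X)$ by the first step; and that $\ker \Phi_{F_X} \subseteq \ker \Phi_X$ under the inclusion $C^*(F_X) \hookrightarrow A_X$, so the vanishing of mixed moments for $\Phi|_J$ descends from that for $\Phi$. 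Passing to unitizations, $(\tilde{J}, \Phi|_J^{\sim})$ therefore satisfies conditions (1)--(5) of Definition \ref{def:redamprod} for the reduced amalgamated product of $(\Atil_{F_X}, \Phi_{F_X})_{X \in D}$ over $\widetilde{C_0(F^0)}$, and uniqueness yields $J \cong \Cstred(F,D)$.

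I expect the main obstacle to be the multiplier-algebra bookkeeping when $W$ is infinite: one needs to justify the strict convergence of the sum defining $p$, and to transfer the central direct-product decomposition $\Cstred(E,C) = C_0(W) \times J$ through the passage to unitizations in order to apply the characterisation of the reduced amalgamated product to $\tilde{J}$.
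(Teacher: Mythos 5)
The paper records this lemma without proof (it is labelled ``straightforward''), so there is no argument to compare against line by line; your proposal is a correct way of supplying the details, and it runs parallel to the paper's own proof of Lemma \ref{lem:preparingmain1}: exhibit a candidate subalgebra together with a conditional expectation, verify conditions (1)--(5) of Definition \ref{def:redamprod}, and invoke uniqueness of the reduced amalgamated product. Your first step is right: since $F^1=E^1$, every $v\in W:=E^0\setminus F^0$ is an isolated vertex, so $A_X=C^*(E_X)\cong C^*(F_X)\oplus C_0(W)$ and $\Phi_{E_X}=\Phi_{F_X}\oplus\id_{C_0(W)}$, whence each $v\in W$ is central in $\Cstred(E,C)$ with $v\,\Cstred(E,C)=\C v$.

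Two remarks on the points you flag. First, the multiplier-algebra bookkeeping can be avoided entirely: letting $J$ be the C*-subalgebra of $\Cstred(E,C)$ generated by $\bigcup_{X\in D}C^*(F_X)$, one has $C_0(W)\cdot J=0$ and $C_0(W)\cap J=0$, and every product of elements of the $A_X$'s splits as (product of the $C^*(F_X)$-components) plus (product of the $C_0(W)$-components), so $\Cstred(E,C)=J\oplus C_0(W)$ as a direct product of ideals with no strict limits needed. Second, the unitization issues do work out as you expect: if $F^0$ is infinite, the kernel of $\Phi_{F_X}$ on $\Atil_{F_X}$ is contained in $C^*(F_X)$ and hence in $\ker\Phi_{E_X}$, so condition (4) for $(\tilde{J},\widetilde{\Phi|_J})$ is inherited from the ambient reduced product $(B,\Phi)$ exactly as you say; if $F^0$ is finite, all the algebras $C^*(F_X)$, and hence $J$, are already unital with unit $\sum_{w\in F^0}w$ and no unitization is required. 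Conditions (1)--(3) and (5) are immediate (for (3), note $\Phi$ maps $J$ into $C_0(F^0)$ because $v\Phi(x)=\Phi(vx)=0$ for $v\in W$, $x\in J$), so uniqueness yields an isomorphism of $\tilde{J}$ with $(\Cstred(F,D))^{\sim}$ compatible with the maps $\sigma_X$, which restricts to $J\cong\Cstred(F,D)$, as required.
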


We are now ready to establish one of our main results. In particular,
this provides an extension of \cite[Theorem 7.3]{Tomf07} to finitely
separated graphs. It implies that the linear basis of the dense subalgebra $L(E,C)$ explicitly exhibited in \cite[Corollary 2.8]{AG} is linearly independent in $C^*(E,C)$. Thus, the paths in $E$ are
linearly independent in $C^*(E,C)$, and the vertices of $E$  constitute a set
of pairwise orthogonal nonzero projections in $C^*(E,C)$.

\begin{theorem}
\label{thm:red-prod-inj} Let $(E,C)$ be a finitely separated graph.
\begin{enumerate}
\item The canonical map $L(E,C)\to \Cstred(E,C)$ is
injective, and hence so is the canonical map $L(E,C)\to
C^*(E,C)$.
\item If $E$ is a {\rm(}non-separated{\rm)} row-finite graph, then the canonical map
$C^*(E)\to \Cstred(E)$ is an isomorphism.
\end{enumerate}
\end{theorem}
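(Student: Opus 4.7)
Both parts of Theorem \ref{thm:red-prod-inj} rely on the faithful conditional expectation $\Phi \colon \Cstred(E,C) \to A_0$ from Subsection \ref{C*redprep} and the realization of $\Cstred(E,C)$ as a reduced amalgamated free product of the algebras $A_X = C^*(E_X)$ over $A_0 = C_0(E^0)$.

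For part (1), I would combine three ingredients. First, for each $X \in C$ the canonical map $L(E_X) \hookrightarrow A_X$ is injective, since $E_X$ is a row-finite non-separated graph and the classical Leavitt-path-algebra-to-graph-C*-algebra embedding applies. Second, by Remark \ref{rem:forrings}, $L(E,C)$ is the algebraic amalgamated coproduct $\bgast^{\text{alg}}_{L_0} L(E_X)$ of the $L(E_X)$ over $L_0 = \bigoplus_{v\in E^0} \C v$. Third, the basis for $L(E,C)$ from \cite[Corollary 2.8]{AG} presents each element as a sum of normal-form monomials, each of which (up to a vertex prefactor in $L_0$) is a product $w_1 \cdots w_n$ with $w_i \in L(E_{X_i}) \cap \ker \Phi_{X_i}$ and $X_i \neq X_{i+1}$. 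Acting such a monomial on the cyclic vector $\xi = 1_{A_0}$ of the GNS Hilbert module $M$ for $\Phi$ produces the elementary tensor $\widehat{w_1} \otimes_{A_0} \cdots \otimes_{A_0} \widehat{w_n}$ in the summand $M^0_{X_1} \otimes_{A_0} \cdots \otimes_{A_0} M^0_{X_n}$ of $M$. Distinct monomials either land in distinct summands or yield linearly independent tensors inside a common summand, using the injections $L(E_{X_i}) \hookrightarrow A_{X_i}$ together with faithfulness of the $\Phi_{X_i}$ to transfer linear independence from $L(E_{X_i})$ into the GNS components $M^0_{X_i}$. Hence the basis has linearly independent image in $\Cstred(E,C)$, yielding injectivity of $L(E,C) \to \Cstred(E,C)$; injectivity of $L(E,C) \to C^*(E,C)$ then follows from the canonical factorization $L(E,C) \to C^*(E,C) \to \Cstred(E,C)$.

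For part (2), let $\pi \colon C^*(E) \to \Cstred(E)$ be the canonical map. I would verify that the conditional expectation $\Phi_E$ of Theorem \ref{3condexp} equals $\Phi \circ \pi$, by checking that the two agree on all $\gamma\nu^*$ and invoking the uniqueness statement of Theorem \ref{3condexp}. To compute $\Phi(\gamma\nu^*)$ in the reduced amalgamated free product, decompose $\gamma = \gamma_1 \cdots \gamma_k$ and $\nu = \nu_1 \cdots \nu_l$ into maximal monochromatic syllables, where a syllable is a maximal sub-path all of whose edges share a common source $v$ (so it lies in the single $A_{X_v}$, for $X_v = s^{-1}(v)$). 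Then proceed recursively: if the innermost syllables $\gamma_k$ and $\nu_l$ lie in the same $A_X$, compute $\Phi_X(\gamma_k\nu_l^*)$ using the known formula (\ref{basicPhi}) for $\Phi_{E_X}$, extract the resulting vertex-valued scalar, and recurse on the shortened product $(\gamma_1 \cdots \gamma_{k-1})(\nu_{l-1}^* \cdots \nu_1^*)$; otherwise $\gamma_1 \cdots \gamma_k \nu_l^* \cdots \nu_1^*$ is already a reduced word with each factor in $\ker \Phi_{X_i}$ and alternating $X_i$'s, and property (4) of Definition \ref{def:redamprod} forces $\Phi(\gamma\nu^*) = 0$. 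Either outcome matches (\ref{basicPhi}). Once $\Phi \circ \pi = \Phi_E$ is established, the faithfulness of $\Phi$ and $\Phi_E$ forces $\pi$ to be injective; surjectivity is immediate because the generators of $C^*(E)$ map onto generators of $\Cstred(E)$.

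The main obstacle lies in part (2): the inductive bookkeeping to check that the scalar $\bigl(\prod_{i=1}^n |s^{-1}s(e_i)|\bigr)^{-1}$ in formula (\ref{basicPhi}) is correctly assembled from the successive applications of the local expectations $\Phi_{E_X}$ to the syllables of $\gamma$, while simultaneously verifying that each non-scalar factor produced in the recursion lies in $\ker \Phi_{X}$ so that property (4) of Definition \ref{def:redamprod} can be invoked when needed.
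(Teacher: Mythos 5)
Your part (1) has a genuine gap at its central step. Injectivity of each $L(E_{X_i})\to C^*(E_{X_i})$ and faithfulness of each $\Phi_{X_i}$ do give embeddings $L^{\rm o}_{X_i}\hookrightarrow M^{\rm o}_{X_i}$, but they do not let you ``transfer linear independence'' into the summand $M^{\rm o}_{X_1}\otimes_{A_0}\cdots\otimes_{A_0}M^{\rm o}_{X_n}$: this is an interior tensor product of Hilbert $A_0$-modules, i.e.\ the algebraic tensor product balanced over $A_0$ modulo the null space of the $A_0$-valued inner product, and an element of $L^{\rm o}_{X_1}\odot_{A_0}\cdots\odot_{A_0}L^{\rm o}_{X_n}$ could a priori be nonzero while its image satisfies $\langle z,z\rangle=0$. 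The whole difficulty of part (1) is to rule this out, i.e.\ to show that vectors with vanishing inner product already lie in the kernel of the balancing map $\odot_{\C}\to\odot_{A_0}$; the paper does this by a Lance-type argument (cf.\ \cite[Proposition 4.5]{Lance}) which uses that $M_k(A_0)$ is finite dimensional (to produce a projection $E$ and a positive $B$ with $BX=E$, $XE=X$ for $X=\langle x,x\rangle$), and therefore it first assumes $E^0$ finite and then treats infinite $E^0$ by a direct limit over finite complete subobjects together with Lemmas \ref{lem:preparingmain1} and \ref{lem:diffrverices}. None of this appears in your sketch, so the assertion ``distinct monomials yield linearly independent tensors'' is, as written, a non sequitur. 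A smaller inaccuracy: the normal-form monomials of \cite[Corollary 2.8]{AG} are not individually alternating products of elements of the kernels (for instance $ee^*$ with $e\in X$, $|X|\ge 2$, is not in $\ker\Phi_X$, and its action on $\xi$ has components both in $A_0$ and in $M^{\rm o}_X$), so one must argue with the decomposition $L(E,C)\cong A_0\oplus\bigoplus L^{\rm o}_{X_1}\odot_{A_0}\cdots\odot_{A_0}L^{\rm o}_{X_n}$ rather than monomial by monomial.

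Your part (2) is workable and is a mild reorganization of the paper's argument: the paper shows that the unitization of $C^*(E)$, equipped with the extension of $\Phi_E$, itself satisfies conditions (1)--(5) of Definition \ref{def:redamprod} --- the crux being the freeness condition (4), verified by a combinatorial analysis of alternating words in spanning elements of the kernels $\ker\Phi_{v}$ --- and concludes by uniqueness of the reduced amalgamated product; you instead compute $\Phi$ on the reduced side on the spanning set of elements $\gamma\nu^*$, match the result with \eqref{basicPhi}, and use faithfulness of $\Phi_E$ (Theorem \ref{3condexp}) to see that the canonical surjection has trivial kernel. Both routes rest on the same faithfulness result and on essentially the same syllable bookkeeping; in your recursion the step ``extract the scalar and recurse'' must be justified by writing $\gamma_k\nu_l^*=\Phi_X(\gamma_k\nu_l^*)+c$ with $c\in\ker\Phi_X$ and applying condition (4) to the alternating word containing $c$ --- exactly the obstacle you flag, and it is routine. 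So part (2) would go through once written out carefully, but part (1) as proposed is missing its key argument.
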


\begin{proof} Throughout, set $A_0 := C^*(E^0,\emptyset)= C_0(E^0)$.

(1) We first consider the case where $E^0$ is finite. In this case,
$A_0$ is a commutative finite-dimensional C*-algebra, and $A_0=
L(E^0,\emptyset)$. Let $\psi \colon L(E,C)\to \Cstred(E,C)$ be the
canonical map, and set $L_X:= L(E_X)= A_0\oplus L_X^{{\rm o}}$ for
$X\in C$, where $L_X^{{\rm o}}=\ker{(\Phi _X)|_{L_X}}$.  We will
denote algebraic tensor products by $\odot $.

We have
$$L(E,C)\cong \bgast_{A_0}^{\text{alg}}\, L_X =A_0\oplus \bigoplus_{(X_1,\dots ,X_n)\in \Lambda(C)}
L^{{\rm o}}_{X_1}\odot _{A_0}\cdots \odot _{A_0} L^{{\rm o}}_{X_n}
\,,$$ and we want to show that $\psi$ embeds each of the terms
$L^{{\rm o}}_{X_1}\odot _{A_0}\cdots \odot _{A_0} L^{{\rm o}}_{X_n}$
into the corresponding $M^{{\rm o}}_{X_1}\otimes _{A_0} \cdots
\otimes _{A_0}M^{{\rm o}}_{X_n}$, where $M_X:=L^2(C^*(E_X),\Phi _X)$
for all $X\in C$. For $(X_1,\dots ,X_n)\in \Lambda (C)$ and $a_i\in
L^{{\rm o}}_{X_i}$, $i=1,\dots ,n$, we have
$$\psi (a_1\odot _{A_0} \cdots \odot_{A_0} a_n) 1_{A_0} = \hat{a}_1\otimes _{A_0} \cdots \otimes_{A_0} \hat{a}_n
\in M^{{\rm o}}_{X_1}\otimes _{A_0} \cdots \otimes _{A_0}M^{{\rm
0}}_{X_n} \,.$$
 Hence, it suffices to
show that, for $z$ in the algebraic tensor product $L^{{\rm
0}}_{X_1}\odot _{\C} \cdots \odot_{\C} L^{{\rm o}}_{X_n}$, we have
$\langle z,z \rangle =0$ if and only if $z$ belongs to the kernel
$K_n$ of the natural map
\begin{equation}
\label{eq:L-map1} L^{{\rm o}}_{X_1}\odot _{\C} \cdots \odot_{\C}
L^{{\rm o}}_{X_n}\longrightarrow L^{{\rm o}}_{X_1}\odot _{A_0}\cdots
\odot _{A_0} L^{{\rm o}}_{X_n} \,,
\end{equation}
cf.~the proof of \cite[Proposition 4.5]{Lance}. We proceed by
induction on $n$. If $n=1$ then the result follows from the fact
that $\Phi _{X_1}$ is faithful and $L_{X_1}\subseteq C^*(E_{X_1})$,
so that $L_{X_1}^{{\rm o}}\subseteq M^{{\rm o}}_{X_1}$. Assume that
$n>1$ and that $L^{{\rm o}}_{X_1}\odot _{A_0}\cdots \odot _{A_0}
L^{{\rm o}}_{X_{n-1}}$ embeds in $M^{{\rm o}}_{X_1}\otimes
_{A_0}\cdots \otimes _{A_0} M^{{\rm o}}_{X_{n-1}}$. The map in
(\ref{eq:L-map1}) is the composition of the linear maps
\begin{equation}
\label{eq:L-map2} (L^{{\rm o}}_{X_1}\odot _{\C} \cdots \odot_{\C}
L^{{\rm o}}_{X_{n-1}})\odot_{\C}L^{{\rm o}}_{X_n}\longrightarrow
(L^{{\rm o}}_{X_1}\odot _{A_0}\cdots \odot _{A_0} L^{{\rm
0}}_{X_{n-1}})\odot_{\C} L^{{\rm o}}_{X_n}
\end{equation}
and
\begin{equation}
\label{eq:L-map3} (L^{{\rm o}}_{X_1}\odot _{A_0}\cdots \odot _{A_0}
L^{{\rm o}}_{X_{n-1}})\odot_{\C} L^{{\rm o}}_{X_n}\longrightarrow
(L^{{\rm o}}_{X_1}\odot _{A_0}\cdots \odot _{A_0} L^{{\rm
0}}_{X_{n-1}})\odot_{A_0} L^{{\rm o}}_{X_n}.
\end{equation}
Write $N_0 :=L^{{\rm o}}_{X_1}\odot _{A_0}\cdots \odot _{A_0}
L^{{\rm o}}_{X_{n-1}}$. By the induction hypothesis, $N_0$ embeds in
the Hilbert $A_0$-module $N:=M^{{\rm o}}_{X_1}\otimes _{A_0}\cdots
\otimes _{A_0} M^{{\rm o}}_{X_{n-1}}$. The Hilbert $A_0$-module
$M^{{\rm o}}_{X_1}\otimes _{A_0}\cdots \otimes _{A_0} M^{{\rm
0}}_{X_{n}}$ is the interior tensor product $N\otimes _{A_0} M^{{\rm
0}}_{X_n}$, so that is the completion of the inner-product module
$(N\odot _{\C}M^{{\rm o}}_{X_n})/Y$, where
$$Y:=\{z\in N\odot _{\C}M^{{\rm o}}_{X_n}: \langle z,z \rangle =0\}$$
and $\langle \cdot ,\cdot \rangle$ is the sesquilinear form on
$N\odot _{\C}M^{{\rm o}}_{X_n}$ defined by
$$\langle n_1\otimes m_1,
n_2\otimes m_2 \rangle = \langle n_2, \phi(\langle n_1,m_1\rangle
)m_2 \rangle ,
$$
for $n_1,n_2\in N$, $m_1,m_2\in M^{{\rm o}}_{X_n}$, where $\phi
\colon A_0 \to \calL (M^{{\rm o}}_{X_n})$ is the map given by the
left action of $A_0$ on $M^{{\rm o}}_{X_n}$.

Now we follow the proof of \cite[Proposition 4.5]{Lance}. Assume
that
$$z=\sum _{i=1}^k x_i\otimes y_i\in N_0\odot _{\C} L^{{\rm
0}}_{X_n}\subseteq N\odot _{\C} M^{{\rm o}}_{X_n}$$
 satisfies that
$\langle z,z\rangle =0$. Let $x=(x_1,\dots ,x_k)\in N_0^k\subseteq
N^k$. As in  \cite[proof of 4.5]{Lance}, $N^k$ is a Hilbert
$M_k(A_0)$-module and we have
$$\langle z,z\rangle = \langle y, \phi ^{(k)}(X)y\rangle ,$$
where $y=(y_1,\dots ,y_n)\in (L^{{\rm o}}_{X_n})^k$ and $X=(\langle
x_i,x_j\rangle )=\langle x, x\rangle _{M_k(A_0)}$. Since $M_k(A_0)$
is a finite dimensional C*-algebra, there are a projection $E$
and a positive element $B$ in $M_k(A_0)$ such that
$$BX=E, \qquad XE=X .$$
It follows that $xE=x$ and that $\phi ^{(k)}(E)y=0$. This shows that
$z$ belongs to the subspace of $N_0\odot _{\C} L^{{\rm o}}_{X_n}$
generated by all elements of the form $na\otimes m-n\otimes \phi
(a)m$, $n\in N_0,m\in L^{{\rm o}}_{X_n}, a\in A_0$, that is, to the
kernel of the map (\ref{eq:L-map3}).

Finally, assume that $z\in (L^{{\rm o}}_{X_1}\odot _{\C} \cdots
\odot_{\C} L^{{\rm o}}_{X_{n-1}})\odot_{\C}L^{{\rm o}}_{X_n}$ is
such that $\langle z,z \rangle =0$. Let $\ol{z}$ be the image of $z$
under the map (\ref{eq:L-map2}). Then $\langle \ol{z},\ol{z}
\rangle=0$ and by what we have just proven,
$$\ol{z}=\sum _j (\ol{z_j}a_j\otimes  y_j - \ol{z_j}\otimes a_j y_j)$$
for some $z_j\in L^{{\rm o}}_{X_1}\odot _{\C} \cdots \odot_{\C}
L^{{\rm o}}_{X_{n-1}}$, $y_j\in L^{{\rm o}}_{X_n}$, $a_j\in A_0$. It
follows that
$$z-\sum _j (z_ja_j\otimes  y_j - z_j\otimes a_j y_j)\in K_{n-1}\odot_{\C} L^{{\rm o}}_{X_n}\subseteq K_n.$$
Since $z_ja_j\otimes  y_j - z_j\otimes a_j y_j\in K_n$ for all $j$,
we conclude that $z\in K_n$, as desired.

This concludes the proof in the case where $E^0$ is finite. If $E^0$
is infinite, then by \cite[Proposition 3.6]{AG} we can write $L(E,C)=\varinjlim L(F,D)$, where
$(F,D)$ ranges over all the finite complete subobjects of $(E,C)$,
and all the limit maps $L(F,D)\to L(E,C)$ are injective.

For a finite complete subobject $(F,D)$ of $(E,C)$, the canonical map
$L(F,D)\to \Cstred(F,D)$ is injective, as proved above. Let
$F'$ be the subgraph of $E$ with $(F')^0= E^0$ and $(F')^1= F^1$. Then
the canonical map $\Cstred(F,D)\to \Cstred(E,C)$ is the
composition of the canonical maps $\Cstred (F,D)\to \Cstred
(F', D)$ and $\Cstred (F',D)\to \Cstred (E,C)$. By
Lemmas \ref{lem:diffrverices} and \ref{lem:preparingmain1}, both of the latter maps
are injective and so the canonical map $L(F,D)\to \Cstred(E,C)$ is
also injective. Since $L(E,C)=\varinjlim L(F,D)$, it follows that
the canonical map $L(E,C)\to \Cstred(E,C)$ is
injective, as desired.

(2) Since $E$ is a non-separated graph, we identify $C$ with
$E^0\setminus \Si (E)$, by corresponding $\{s^{-1}(v)\}$ to $v$ for
non-sinks $v\in E^0$. We shall write $E_v= E_{s^{-1}(v)}$ for $v\in
C$. Set $n_v := |s^{-1}(v)|$ and $L_v:=L(E_v)$, and set
$A_v:=C^*(E_v)$, and  $B_v:=\Atil_v$. Let $B= {C^*(E)}^{\sim}$ and
recall that we have a faithful conditional expectation $\Phi \colon
B\to \Atil_0$ (Subsection \ref{C*redprep}). To establish the desired
isomorphism, it is enough to show that $(B,\Phi)$ satisfies
conditions (1)--(5) of Definition \ref{def:redamprod}, because these
conditions characterize completely the reduced amalgamated product
of the family $(B_v,\Phi _v)_{v\in C}$. All the conditions are
immediate, with the exception of condition (4). To show condition
(4), take a sequence of vertices $v_1,\dots ,v_n$ in $C$, with
$n\ge2$, such that $v_i\ne v_{i+1}$ for $i=1,\dots ,n-1$. We have to
show that $\Phi (a_1a_2\cdots a_n)=0$ when $a_i\in \ker \Phi_{v_i}$
for $i=1,\dots ,n$. Since $L_{v_i}^{{\rm o}} := L_{v_i} \cap \ker
\Phi_{v_i}$ is dense in $\ker \Phi_{v_i}$, it suffices to prove this
statement for all choices of $a_i\in L_{v_i}^{{\rm o}}$, $i=1,\dots
,n$.

Consider $v\in C$, and note that any path of positive length in $E_v$ consists of either a sequence of loops at $v$ or else a sequence of loops at $v$ followed by one edge from $v$ to a different vertex. In particular, all paths of positive length in $E_v$ start at $v$. Observe that every element of $L_v$ is a linear combination of terms of the following five types:
\begin{enumerate}
\item Paths $\gamma$ in $E_v$ of positive length.
\item Paths $\nu^*$, where $\nu$ is a path in $E_v$ of positive length.
\item Paths $\gamma\nu^*$, where $\gamma$ and $\nu$ are distinct paths in $E_v$ of positive length.
\item Terms $\gamma(ee^*- n_v^{-1}v)\gamma^*$, where $e\in s^{-1}(v)$ and $\gamma$ is a path in $E_v$ from $v$ to $v$.
\item Vertices $w\in E^0$.
\end{enumerate}
All terms of types (1)--(4) are in $L^{{\rm o}}_v$ (recall formula
\eqref{basicPhi}), whereas $\Phi_v(w)=w$ for $w\in E^0$. Hence, the
terms of types (1)--(4) span $L^{{\rm o}}_v$.

Returning to our previous discussion, we see that it is enough to
show that $\Phi (a_1a_2\cdots a_n)=0$ for all choices of $a_i \in
L^{{\rm o}}_{v_i}$ where each $a_i$ has one of the forms (1)--(4).
We may also assume that  $a_1a_2\cdots a_n \ne 0$. It is easy to
verify the following:
\begin{itemize}
\item If $a_i$ has one of the forms (1), (3), or (4) and $i>1$, then $a_{i-1}$ is necessarily of type (1).
\item If $a_i$ has one of the forms (2), (3), or (4) and $i<n$, then $a_{i+1}$ is necessarily of type (2).
\end{itemize}
It follows that at most one $a_i$ can be of type (4). If such a term occurs, then
$$a_1a_2\cdots a_n = \gamma(ee^*- n_v^{-1}v)\nu^*$$
for some $v\in E^0$, some $e\in s^{-1}(v)$, and some paths $\gamma$, $\nu$ in $E$ that end at $v$. In this case, it is clear that $\Phi (a_1a_2\cdots a_n)=0$. (Consider the cases $\gamma=\nu$ and $\gamma\ne\nu$ separately.)

If no $a_i$ is of type (4), then one of the following holds: $a_1a_2\cdots a_n = \gamma$ for some path $\gamma$ in $E$ of positive length; or $a_1a_2\cdots a_n = \nu^*$ for some path $\nu$ in $E$ of positive length; or
$$a_1a_2\cdots a_n =  \gamma_1\gamma_2 \cdots\gamma_j \nu^*_k \nu^*_{k+1} \cdots \nu^*_n$$
where $k=j$ or $k=j+1$, and each $\gamma_i$ or $\nu_i$ is a path of positive length in $E_{v_i}$. Obviously $\Phi (a_1a_2\cdots a_n)=0$ in the first two cases, and it holds in the third case provided $\gamma_1\gamma_2 \cdots\gamma_j \ne \nu_n\nu_{n-1} \cdots\nu_k$. Thus, it suffices to assume that the third case obtains, and that $\gamma_1\gamma_2 \cdots\gamma_j = \nu_n\nu_{n-1} \cdots\nu_k$, and to derive a contradiction.

We cannot have $j=1$ and $k=n$, since then $n=2$ while $\gamma_1$ and $\nu_2$ have different starting vertices. We cannot have $j=1$ and $k<n$, since $\gamma_1$ only changes vertices on its terminal edge, whereas $\nu_n$ must change vertices once, and the following path $\nu_{n-1}$ has at least one edge. Thus $j>1$, and similarly $k<n$. Since $\gamma_1\gamma_2\ne 0$, we have $r(\gamma_1)= v_2\ne v_1= s(\gamma_1)$, so $\gamma_1$ consists of a sequence of loops at $v_1$ followed by an edge from $v_1$ to $v_2$. Similarly, $\nu_n$ consists of a sequence of loops at $v_n$ followed by an edge from $v_n$ to $v_{n-1}$. Thus, since $\gamma_1\gamma_2 \cdots\gamma_j = \nu_n\nu_{n-1} \cdots\nu_k$, we see that $\gamma_1=\nu_n$. Now $\gamma_2\cdots\gamma_j= \nu_{n-1}\cdots\nu_k$, and we can continue in the same manner. We eventually find that $n-k+1=j$ and $\gamma_j=\nu_k$. However, $\gamma_j \ne \nu_{j+1}$ because these paths have different starting vertices, and $\gamma_j\ne \nu_j$ (in c
 ase $k=j$) by the assumption of type (3) for $a_j= \gamma_j\nu^*_j$. This provides the desired contradiction.
\end{proof}

Suppose $V$ is a subset of $E^0 \setminus \Si(E)$ and $\calX : V \to C$ is
a function such that $\calX(v) \in C_v$ for every $v\in V$. Define a
subgraph $E_{\calX}$ of $E$ so that $E^0_{\calX} = E^0$ and $E^1_{\calX} =
\bigsqcup_{v\in V} \calX(v)$. View $E_{\calX}$ as a trivially separated
graph, and note that it is a complete subobject of $(E,C)$.

\begin{corollary}  \label{cor:C*partialchoice}
For any $V$ and $\calX$ as above, the induced map $C^*(E_{\calX}) \to
\Cstred(E_{\calX}) \to \Cstred(E,C)$ is injective, and hence so is the
canonical map $C^*(E_{\calX}) \to C^*(E,C)$.
\end{corollary}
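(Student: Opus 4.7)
The plan is to combine Theorem~\ref{thm:red-prod-inj}(2) with Lemma~\ref{lem:preparingmain1}, once the appropriate bookkeeping is in place. The key observation is that $E_{\calX}$, viewed as a trivially separated graph, is row-finite and fits as a complete subobject of $(E,C)$ with the same vertex set, so both results apply.

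First, I would check that $E_{\calX}$ is row-finite: since $(E,C)$ is finitely separated, each $\calX(v)\in C_v$ is finite, so $s_{E_{\calX}}^{-1}(v) = \calX(v)$ is finite for $v\in V$, while all vertices outside $V$ are sinks of $E_{\calX}$. Next I would verify that when $E_{\calX}$ is viewed as a complete subobject of $(E,C)$, the inherited separation $D$ is trivial. By definition $D_v = \{Y \in C_v : Y\cap E_{\calX}^1 \neq \emptyset\}$. For $v\in V$, the set $E_{\calX}^1 = \bigsqcup_{u\in V} \calX(u)$ intersects $s^{-1}(v)$ only in $\calX(v)$, and since $C_v$ is a partition this forces $D_v = \{\calX(v)\} = \{s_{E_{\calX}}^{-1}(v)\}$; for $v\notin V$, $v$ is a sink in $E_{\calX}$, so $D_v = \emptyset$. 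Hence $(E_{\calX},D)$ is trivially separated, and by definition $\Cstred(E_{\calX},D) = \Cstred(E_{\calX})$.

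With these two observations in hand, Theorem~\ref{thm:red-prod-inj}(2) applied to the row-finite graph $E_{\calX}$ yields that the canonical map $C^*(E_{\calX}) \to \Cstred(E_{\calX})$ is an isomorphism, and Lemma~\ref{lem:preparingmain1} (whose hypothesis $E^0 = F^0$ is met by construction) supplies a natural embedding $\Cstred(E_{\calX}) = \Cstred(E_{\calX},D) \hookrightarrow \Cstred(E,C)$. Composing, the map $C^*(E_{\calX}) \to \Cstred(E,C)$ is injective.

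For the second assertion, I would note that the two compositions $C^*(E_{\calX}) \to C^*(E,C) \to \Cstred(E,C)$ and $C^*(E_{\calX}) \to \Cstred(E_{\calX}) \to \Cstred(E,C)$ send each vertex and edge of $E_{\calX}$ to its canonical image in $\Cstred(E,C)$, and hence agree by the universal property of $C^*(E_{\calX})$. Since the second composition is injective, the map $C^*(E_{\calX}) \to C^*(E,C)$ must be injective as well. The only delicate point in the entire argument is checking that the partition $D$ induced on $E_{\calX}$ is trivial; this relies on the hypothesis $\calX(v)\in C_v$ (so each $\calX(v)$ is itself a block of the partition, not a proper subset of one) together with the disjointness of $C_v$. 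Everything else is a direct application of the machinery already assembled.
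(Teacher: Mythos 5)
Your argument is correct and is essentially the paper's own proof: apply Theorem~\ref{thm:red-prod-inj}(2) to the trivially separated row-finite graph $E_{\calX}$ and then Lemma~\ref{lem:preparingmain1} to embed $\Cstred(E_{\calX})$ into $\Cstred(E,C)$. The extra verifications you include (row-finiteness, that the inherited separation on $E_{\calX}$ is trivial, and the factorization through $C^*(E,C)$ for the final assertion) are details the paper handles in the setup preceding the corollary or leaves implicit, and you carry them out correctly.
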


\begin{proof} By Theorem \ref{thm:red-prod-inj}(2), we have $C^*(E_{\calX})\cong
\Cstred(E_{\calX})$, and so Lemma
\ref{lem:preparingmain1} gives that the canonical map $C^*(E_{\calX})\to \Cstred(E,C)$ is injective.
\end{proof}

\begin{remark}
\label{rm:exact} By \cite[Corollary 4.5.4]{BO}, every graph
C*-algebra of a row-finite graph is nuclear. It follows from this
and \cite[Corollary 4.8.3]{BO} that the reduced C*-algebra
$\Cstred(E,C)$ is exact for every finitely separated graph
$(E,C)$. That $\Cstred(E,C)$ is not nuclear in general
follows from the example
$$\Cstred(E,C) \cong \Cstred(\mathbb F_2),$$
where $(E,C)$ is the separated graph with one vertex $v$, two
edges $e_1$, $e_2$, and $C=\{\{e_1\},\{e_2\}\}$. (Recall that
$\Cstred(\mathbb F_2)$ is not nuclear because $\mathbb F
_2$ is not amenable \cite[Theorem 2.6.8]{BO}.)
\end{remark}

We recall the following definitions, see e.g. \cite{AG, Tomf07}.

\begin{definition} \label{defCSsat} Let $(E,C)$ be a finitely
separated graph. Recall the relation $\ge$ defined on $E^0$ by
setting $v\geq w$ if and only if there is a path $\mu$ in $E$ with
$s(\mu)=v$ and $r(\mu)=w$. A subset $H$ of $E^0$ is called
\emph{hereditary} if $v \geq w$ and $v\in H$ always imply $w\in H$.
The set $H$ is called \emph{saturated} if $r(s^{-1}(v)) \subseteq H$
implies $v\in H$ for any $v\in E^0$ which is not a sink or an
infinite emitter. Finally, $H$ is called \emph{$C$-saturated} if
$r(X) \subseteq H$ for some $X\in C_v$, $v\in E^0$, implies $v\in
H$.\end{definition}

Let $\calH$ be the lattice of hereditary $C$-saturated subsets of
$E^0$. By \cite[Theorem 6.11]{AG} there is a lattice isomorphism
between $\calH$ and the lattice $ \Tr(A)$ of two-sided ideals of
$L(E,C)$ generated by idempotents. In the C*-algebra case, we
are at least able to show that the analogous map $\calH\to \calL (C^*(E,C))$
is injective. Here, for any C*-algebra $A$, we denote by $\calL
(A)$ the lattice of closed (two-sided) ideals of $A$. For a subset
$X$ of $E^0$ we denote by $I(X)$ the closed ideal of $C^*(E,C)$
generated by $X\cap C^*(E,C)$.

\begin{corollary}
\label{Mlattinclusion} Let $(E,C)$ be a finitely separated graph,
and let $\calH$ be the lattice of hereditary, $C$-saturated subsets
of $E^0$. Then there is an order-embedding $\calH\to \calL (C^*(E,C))$,
given by $H\mapsto I(H)$. \end{corollary}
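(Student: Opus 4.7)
The map $H \mapsto I(H)$ is tautologically order-preserving, so the plan is to prove it is order-reflecting, which reduces to showing that $I(H) \cap E^0 = H$ for every $H \in \calH$. The nontrivial containment is $I(H) \cap E^0 \subseteq H$, and for this I would produce, for each $H$, a *-homomorphism $\pi_H \colon C^*(E,C) \to C^*(F,D)$ to the C*-algebra of a ``quotient'' separated graph $(F,D)$, such that $\pi_H(v) = 0$ for $v \in H$ while $\pi_H(v) \ne 0$ for $v \notin H$. Since $I(H) \subseteq \ker \pi_H$, this will force any vertex lying in $I(H) \cap E^0$ to lie in $H$.

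The quotient $(F,D)$ is to be defined by $F^0 := E^0 \setminus H$, $F^1 := \{ e \in E^1 \mid s(e), r(e) \notin H \}$, and $D_v := \{ X \cap F^1 \mid X \in C_v \}$ for each $v \in F^0$. This is a finitely separated graph precisely because $H$ is hereditary (so that for $v \in H$ every edge in $s^{-1}(v)$ terminates in $H$, and is therefore correctly discarded) and $C$-saturated (so that for every $v \notin H$ and every $X \in C_v$, the set $X \cap F^1$ is nonempty, giving an honest partition of $s_F^{-1}(v)$ into nonempty subsets). The map $\pi_H$ is then defined on generators via the universal property of $C^*(E,C)$: send $v \in H$ and $e \in E^1 \setminus F^1$ to $0$, and send every other vertex or edge to its canonical image in $C^*(F,D)$. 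Relations (V), (E), and (SCK1) present no difficulty; the only relation requiring attention is (SCK2) for a finite set $X \in C_v$, which reduces to $0 = 0$ when $v \in H$ (by hereditariness every $e \in X$ lies in $E^1 \setminus F^1$) and to (SCK2) for $X \cap F^1 \in D_v$ in $C^*(F,D)$ when $v \notin H$.

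Nonvanishing of $\pi_H(v)$ for $v \notin H$ is then immediate from Theorem \ref{thm:red-prod-inj}(1) applied to $(F,D)$: that theorem identifies $L(F,D)$ with its canonical image in $C^*(F,D)$, and the vertices of $(F,D)$ are linearly independent (indeed, pairwise orthogonal nonzero projections) in $L(F,D)$. The main obstacle, and the one place where both of the hypotheses on $H$ are genuinely used, is the setup of $(F,D)$ and the corresponding verification of (SCK2) for $\pi_H$; everything else in the argument is formal.
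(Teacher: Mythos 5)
Your proposal is correct and follows essentially the same route as the paper: the paper likewise forms the quotient separated graph $(E/H,C/H)$ (your $(F,D)$ coincides with it, since hereditariness of $H$ makes the condition $r(e)\notin H$ equivalent to $s(e),r(e)\notin H$, and $C$-saturation gives the nonempty partition sets), and it deduces $E^0\cap I(H)=H$ from the embedding $L(E/H,C/H)\to C^*(E/H,C/H)$ of Theorem \ref{thm:red-prod-inj}(1). The only cosmetic difference is that the paper identifies $C^*(E,C)/I(H)\cong C^*(E/H,C/H)$ outright, whereas you use just the induced *-homomorphism killing $H$, which suffices.
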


\begin{proof} Clearly, it suffices to show that $E^0\cap I(H) =H$, for any $H\in \calH$.
Thus, let $H$ be a hereditary $C$-saturated subset of $E^0$.

We construct a finitely separated graph $(E/H, C/H)$ as in
\cite[Construction 6.8]{AG}. Namely, $E/H$ is the quotient graph,
that is, the subgraph of $E$ with
$$(E/H)^0 = E^0\setminus H \qquad\quad \text{and} \qquad\quad (E/H)^1= r_E^{-1}(E^0\setminus H) = E^1/H,$$
and, for $v\in (E/H)^0$,  we set
$$(C/H)_v := \{ X/H \mid X\in C_v  \},$$
which is a partition of $s_{E/H}^{-1}(v)$, and $C/H :=
\bigsqcup_{v\in E^0\setminus H } (C/H)_v$. Here, for any $X\subseteq
E^1$, we denote by $X/H$ the set $\{e\in X: r(e)\notin H\}$. Observe
that $X/H\ne \emptyset $ for all $X\in C_v$ with $v\in E^0\setminus
H$, because $H$ is $C$-saturated.

Since $r_E^{-1}(H)\cap C^*(E,C) \subseteq I(H)$, the cosets of the elements in $(E/H)^0\cap C^*(E,C)$ and $(E/H)^1\cap C^*(E,C)$ generate $C^*(E,C)/I(H)$. It is easily checked (by using the universal property of $C^*(E,C)$) that $C^*(E,C)/I(H)$ is presented by the above generators together with the defining relations of $C^*(E/H,C/H)$. Thus, we obtain an isomorphism
$$C^*(E,C)/I(H)\longrightarrow C^*(E/H, C/H)$$
sending $v+I(H) \mapsto v$ for $v\in (E/H)^0$ and $e+I(H)\mapsto e$ for $e\in (E/H)^1$. Now any vertex $v\in E^0\setminus H$ is nonzero as an element of $L(E/H, C/H)$ (cf.~\cite[Corollary 2.8]{AG}). Since
$L(E/H,C/H)$ embeds naturally in $ C^*(E/H, C/H)$ by Theorem
\ref{thm:red-prod-inj}(1), it follows that $v\notin I(H)$. Therefore $E^0\cap I(H) =H$, as desired.
\end{proof}

\section{Simplicity in $\Cstred(E,C)$}
\label{sect:idealsinCred}

For a finitely separated graph $(E,C)$, the reduced C*-algebra $\Cstred(E,C)$ has typically fewer ideals than the full C*-algebra $C^*(E,C)$. In fact, it can easily happen that $\Cstred(E,C)$ is simple while $C^*(E,C)$ is not. We shall consider the two
main examples from \cite{AG} and a related one, and we will show that the corresponding
reduced graph C*-algebras are indeed simple. These are somewhat
exotic examples of infinite simple C*-algebras; for instance, one has stable rank one but not real rank zero (see Corollary \ref{cor:C*Enn}). We do not know whether the others are purely infinite or have real rank zero.

We start by taking examples with only one vertex. The main tool is
the following result of Avitzour (\cite[Proposition 3.1]{avitzour}).
Since we will only use the case of faithful states, we state
below the result in this case.

\begin{proposition}
\label{prop:avitzour} \cite[Proposition 3.1]{avitzour} Let $A$, $B$ be
unital C*-algebras and $\phi$, $\psi$ faithful states on them.
Let $(D,\Phi)$ be the reduced amalgamated product of $(A,\phi)$ and
$(B,\psi)$ {\rm(}over $\C${\rm)}. Let $a\in \ker\phi$ and $b\in \ker\psi$ be unitaries
such that $\phi$, $\psi$ are invariant with respect to conjugation
by $a$, $b$ respectively. Let $c\in \ker\psi$ be a unitary such that
$\psi (b^*c)=0$.

Then for all $x$ in $D$,
$$\Phi (x)\in {\rm \overline{co}} \{ u^*xu: u \text{ unitary}\},$$
where ${\rm \overline{co}}$ denotes the norm-closed convex hull. It is
enough to take $u$ in the group generated by $a$, $b$, $c$.
\end{proposition}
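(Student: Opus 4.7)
The strategy is an averaging argument via unitary conjugation, exploiting orthogonality in the GNS Hilbert space $L^2(D,\Phi)$. By norm-continuity of $\Phi$ and of each conjugation map $x\mapsto u^*xu$, together with the fact that ${\rm \overline{co}}\{u^*xu\}$ is norm-closed, it suffices to establish the statement for $x$ in a norm-dense $*$-subalgebra of $D$. A natural choice is the algebraic amalgamated free product $D_0$, whose elements decompose as $\lambda\cdot 1 + \sum_j w_j$ where each $w_j$ is a reduced word $x^{(j)}_1 x^{(j)}_2 \cdots x^{(j)}_{n_j}$ with $n_j\ge 1$ and factors alternating between $A^\circ:=\ker\phi$ and $B^\circ:=\ker\psi$. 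By condition (4) of Definition~\ref{def:redamprod}, $\Phi$ annihilates every such reduced word, so $\Phi(x_0)=\lambda$; by linearity the task reduces to showing that, for any single reduced word $w$ of length $\ge 1$ and any $\epsilon>0$, one can exhibit unitaries $u_1,\dots,u_m$ in $\Gamma:=\langle a,b,c\rangle$ with
\[
\Bigl\|\tfrac{1}{m}\sum_{k=1}^m u_k^*\, w\, u_k\Bigr\|<\epsilon.
\]

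\textbf{Averaging step.} The $\mathrm{Ad}(a)$-invariance of $\phi$ gives $a A^\circ a^*\subseteq A^\circ$, and $\mathrm{Ad}(b)$-invariance gives $b B^\circ b^*\subseteq B^\circ$. A short computation using $\mathrm{Ad}(b^*)$-invariance (which follows from $\mathrm{Ad}(b)$-invariance) yields $\psi(bc^*)=\psi(b^*(bc^*)b)=\psi(c^*b)=\overline{\psi(b^*c)}=0$, so $v:=bc^*$ is a unitary in $B^\circ$, and similarly $v^{-1}=cb^*\in B^\circ$. Given a reduced word $w$, I would form the Ces\`aro averages
\[
T^v_N(w):=\tfrac{1}{N}\sum_{k=0}^{N-1} v^{-k}\, w\, v^k,\qquad T^a_N(w):=\tfrac{1}{N}\sum_{k=0}^{N-1} a^{-k}\, w\, a^k,
\]
and argue that iterating these operations drives the norm to $0$. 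The key input is that when the normal-form expansions of $v^{-k}\, w\, v^k$ (and analogously $a^{-k}\, w\, a^k$) are compared for distinct $k$, the corresponding vectors in $L^2(D,\Phi)$ are pairwise orthogonal; Pythagoras then gives $\|T^v_N(w)\|^2_{L^2}\le N^{-1}\|w\|^2$, and a comparison between the $L^2$-norm and operator norm on the cyclic subspace generated by $w$ shows $\|T^v_N(w)\|\to 0$. Composing $T^v_N$ with $T^a_N$ handles words with endpoints in either $A^\circ$ or $B^\circ$; the final expression is an average of conjugates by elements of $\Gamma$.

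\textbf{Main obstacle.} The genuinely delicate step is proving the orthogonality claim. When one conjugates $w=x_1\cdots x_n$ by $v^{\pm k}$, the outer factors $x_1,x_n$ may absorb into the neighbouring power of $v$ whenever they sit in $B^\circ$, producing normal forms whose length and internal structure depend on $k$. A careful case analysis based on which of $A^\circ$, $B^\circ$ contains the endpoints of $w$ is needed, and the role of $a$ is precisely to rotate words with $B^\circ$-endpoints into ones with $A^\circ$-endpoints so that the $v$-averaging becomes effective. Most critically, the asymmetry in the hypotheses---invariance of $\psi$ under $\mathrm{Ad}(b)$ but not $\mathrm{Ad}(c)$, together with $\psi(b^*c)=0$---is what forces distinct powers $v^k=(bc^*)^k$ to have $L^2$-orthogonal normal-form expansions in $B$, and hence drives the Ces\`aro estimate. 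Making these orthogonality bounds quantitative, whether via the Fock-space realization of the reduced amalgamated product (as in \cite{voicu,ivanov}) or an inductive argument on word length, is the principal technical hurdle.
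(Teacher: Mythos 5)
This proposition is not proved in the paper at all: it is quoted verbatim from Avitzour \cite[Proposition 3.1]{avitzour}, so the only benchmark is Avitzour's own Powers-type averaging argument. Your overall plan (pass to the dense algebraic amalgamated product, kill the scalar-free part of a reduced word by Ces\`aro averages of conjugates by unitaries from $\langle a,b,c\rangle$) has exactly the right shape, but the decisive analytic step fails as you state it. Pairwise orthogonality of the vectors $\widehat{u_k^*wu_k}$ in $L^2(D,\Phi)$, plus ``a comparison between the $L^2$-norm and the operator norm on the cyclic subspace generated by $w$'', cannot yield $\bigl\|\tfrac1N\sum_k u_k^*wu_k\bigr\|\to0$: there is no such comparison. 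For instance, for a Haar unitary $u$ in $(C(\mathbb T),\tau)$ the averages $\tfrac1N\sum_{k=1}^N u^k$ are pairwise $L^2$-orthogonal summands with $L^2$-norm $N^{-1/2}$, yet have operator norm $1$. The mechanism that actually works (Powers, Avitzour) is an operator-norm lemma about supports: one produces mutually orthogonal projections $P_k$ on the GNS/Fock space --- projections onto the closed spans of reduced words with prescribed, pairwise orthogonal prefixes --- such that $(1-P_k)\,u_k^*wu_k\,(1-P_k)=0$, and then $\bigl\|\tfrac1N\sum_k u_k^*wu_k\bigr\|\le 2\|w\|/\sqrt N$. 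Orthogonality of ranges of the conjugated operators, not of their GNS vectors, is what drives the estimate, and supplying it is the entire content of the proof, not a finishing technicality.

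The combinatorial input is also unsupported. Your unitary $v=bc^*$ does lie in $\ker\psi$, but the hypotheses say nothing about $\psi(v^k)$ for $|k|\ge2$ ($\psi$ is not assumed $\mathrm{Ad}(c)$-invariant), so the claimed orthogonality of the normal forms of $v^{-k}wv^k$ for distinct $k$ has no basis; even writing $v^{\pm k}$ in reduced form forces you to center it as $v^k-\psi(v^k)1$, and the uncontrolled scalar parts create cross terms you cannot discard. What the hypotheses really give is: $\mathrm{Ad}(a)$- and $\mathrm{Ad}(b)$-invariance guarantee that conjugation by powers of $a$ and $b$ keeps letters inside $\ker\phi$ and $\ker\psi$, while $c$ --- orthogonal to both $1$ and $b$ in $L^2(B,\psi)$ --- is the element used to manufacture the distinct orthogonal prefixes behind the projections $P_k$ above; this bookkeeping is genuinely different from ``powers of $bc^*$ are orthogonal.'' Finally, the reduction ``by linearity'' to a single reduced word is too quick: one family of unitaries must handle all the finitely many words occurring in $x_0$ simultaneously, which is repaired by iterating the averaging maps (each is a convex combination of conjugations and they compose), but this should be said. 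As it stands, the proposal reproduces the strategy of Avitzour's proof while omitting precisely the lemma that makes it work.
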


It follows readily from this result that in the given situation, $D$ must be simple.
Indeed, let $J$ be a nonzero closed ideal of $D$, and let $x$ be a
nonzero positive element of $J$. Since $\phi$ and $\psi$ are
faithful it follows from \cite{dykema} or \cite{ivanov} that $\Phi$
is faithful and so Proposition \ref{prop:avitzour} gives that $J$
contains the invertible element $\Phi (x)$.

We apply now the result to reduced graph C*-algebras.

As in \cite{McCla3}, we will use the following unitaries in
$M_n(\C)$. Let $\lambda _n$ be a primitive $n$-th root of $1$, and
set:

$$u_n :=\diag(1,\lambda _n,\dots ,\lambda _n^{n-1}),\qquad\qquad
v_n :=\begin{pmatrix} 0 & 1 & 0 & \cdots & 0\\
0 & 0 & 1 & \cdots & 0\\
& \cdots & & \cdots &  \\
0 & 0 & \cdots & 0 & 1\\
1 & 0 & 0 & \cdots & 0
\end{pmatrix} \,.
$$

\medskip

\begin{proposition}
\label{prop:twoLeavialgs} Let $n,m>1$, and let $(E,C)$ be the separated
graph with one vertex $v$ and with $C_v :=\{X,Y\}$, where $|X|=n$ and
$|Y|=m$. Then the reduced graph C*-algebra $\Cstred(E,C)$
is simple.
\end{proposition}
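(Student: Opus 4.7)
The plan is to identify $\Cstred(E,C)$ as a reduced free product of two Cuntz algebras and then apply Avitzour's simplicity criterion (Proposition \ref{prop:avitzour}) using unitaries lifted from the matrices $u_n$, $u_m$, $v_m$. Since $E^0 = \{v\}$, we have $A_0 = C_0(E^0) = \C$, already unital, so $B_0 = \C$. The subgraphs $E_X$ and $E_Y$ each consist of one vertex with $n$ (resp.\ $m$) loops, so $A_X = C^*(E_X) \cong \calO_n$ and $A_Y = C^*(E_Y) \cong \calO_m$ are unital and equal to their unitizations $B_X, B_Y$. Let $s_1,\dots,s_n$ and $t_1,\dots,t_m$ be the canonical Cuntz isometries. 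By Theorem \ref{3condexp}, $\phi := \Phi_{E_X}$ is the faithful state on $\calO_n$ with $\phi(s_\alpha s_\beta^*) = \delta_{\alpha,\beta}\, n^{-|\alpha|}$, and likewise $\psi := \Phi_{E_Y}$ on $\calO_m$. Hence $\Cstred(E,C)$ is Voiculescu's reduced free product $(\calO_n,\phi) *_{\C} (\calO_m,\psi)$.

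The main obstacle is to exhibit a unitary $a \in \calO_n$ that simultaneously lies in $\ker \phi$ and leaves $\phi$ invariant under conjugation, which is nontrivial because $\phi$ is not a trace on $\calO_n$. The resolution is to use the standard unital embeddings $M_n \hookrightarrow \calO_n$ and $M_m \hookrightarrow \calO_m$ given by $e_{ij} \mapsto s_i s_j^*$ and $e_{ij} \mapsto t_i t_j^*$ to lift $u_n, u_m, v_m$ to the unitaries
\begin{equation*}
a := \sum_{k=1}^n \lambda_n^{k-1}\, s_k s_k^* \in \calO_n, \qquad b := \sum_{k=1}^m \lambda_m^{k-1}\, t_k t_k^* \in \calO_m, \qquad c := \sum_{k=1}^m t_{k+1}\, t_k^* \in \calO_m,
\end{equation*}
with indices mod $m$ in the last sum. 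The restrictions of $\phi$ and $\psi$ to these matrix subalgebras are the normalized traces, so $\phi(a) = \psi(b) = \psi(c) = 0$ (the matrices $u_n, u_m, v_m$ have trace zero, the last because $m \ge 2$). Similarly $b^*c$ corresponds to $u_m^* v_m$, which has zero diagonal in $M_m$, giving $\psi(b^*c) = 0$.

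The key identities, immediate from $s_k^* s_l = \delta_{kl}$, are $a^* s_\alpha = \bar{\lambda}_n^{\alpha_1 - 1}\, s_\alpha$ and $s_\beta^* a = \lambda_n^{\beta_1 - 1}\, s_\beta^*$ for paths $\alpha, \beta$ of positive length; these yield $a^* s_\alpha s_\beta^* a = \bar{\lambda}_n^{\alpha_1 - 1} \lambda_n^{\beta_1 - 1}\, s_\alpha s_\beta^*$. Since $\phi(s_\alpha s_\beta^*) = 0$ unless $\alpha = \beta$ (in which case the scalar factor equals $1$), $\phi$ is $\mathrm{Ad}(a)$-invariant on the dense span of monomials $s_\alpha s_\beta^*$; the degenerate cases $|\alpha| = 0$ or $|\beta| = 0$ are verified separately using $\phi(s_\alpha s_l s_l^*) = 0$ for $|\alpha| \ge 1$, and an analogous calculation shows $\psi$ is $\mathrm{Ad}(b)$-invariant. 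All hypotheses of Proposition \ref{prop:avitzour} are therefore satisfied. Combined with the faithfulness of the canonical conditional expectation $\Phi\colon \Cstred(E,C) \to \C$ from Subsection \ref{C*redprep}, simplicity now follows: for any nonzero closed two-sided ideal $J$ and any nonzero positive $x \in J$, the scalar $\Phi(x) \in \C$ lies in the norm-closed convex hull of unitary conjugates of $x$, all of which are in $J$; since $\Phi(x) > 0$ is invertible in $\Cstred(E,C)$, we conclude $J = \Cstred(E,C)$.
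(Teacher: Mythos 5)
Your proof is correct and follows essentially the same route as the paper: identify $\Cstred(E,C)$ with the reduced free product $(\calO_n,\phi)\ast_{\C}(\calO_m,\psi)$ relative to the canonical faithful states and apply Avitzour's criterion (Proposition \ref{prop:avitzour}) together with faithfulness of $\Phi$. The only cosmetic differences are that you take the diagonal unitaries for $a,b$ and the shift for $c$ (the paper swaps these roles) and that you verify the $\mathrm{Ad}$-invariance of the states by direct computation on monomials rather than by quoting invariance under conjugation by unitaries in the gauge-fixed AF core; both are equally valid.
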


\begin{proof}
Set $A :=\calO_n$ and $B :=\calO _m$, where as usual $\mathcal
O_k$ denotes the Cuntz algebra, and identify $A=C^*(E_X)$ and $B=C^*(E_Y)$. Then $(\Cstred(E,C), \Phi)$ is the reduced amalgamated
product of $(\calO_n,\phi_n)$ and $(\calO _m,\phi _m)$,
where we denote by $\phi _k$ the canonical faithful
state on $\calO_k$. There is a
standard copy of $M_n(\C)$ in $\calO_n$, namely the
linear span of $\{ef^*: e,f\in X\}$, and using this copy we
define the unitary $a :=v_n$ in $A$. Notice that $\phi _n$ is the
composition of the canonical conditional expectation from $\mathcal
O_n$ onto the $AF$-algebra $\calO_n^{\alpha}$ and the tracial
state $\tau _n$ on $\calO_n^{\alpha}=\varinjlim M_{n^i}(\C)$, where $\alpha$ denotes the gauge action.
Using this it is quite easy to show that $\phi_n (axa^*)=\phi_n (x)$ for
all $x\in \calO_n$. Indeed,  $\phi _n$ is invariant with
respect to conjugation by any unitary in $\calO_n^{\alpha}$. Observe that $\phi _n(a)= \trace(v_n) =0$.

Similarly, $b :=v_m$ and $c :=u_m$ are unitaries in $\ker\phi_m$, and $\phi _m$
is invariant with respect to conjugation by both $b$ and $c$.
Moreover, $\phi _m(b^*c)=0$. It therefore follows from Proposition
\ref{prop:avitzour} that $\Cstred(E,C)$ is a simple
C*-algebra.
\end{proof}

We need for our next examples a slight generalization of Proposition
\ref{prop:avitzour} for reduced amalgamated products over C*-algebras different from $\C$. Other generalizations to
this context have been obtained in \cite{McCla3} and \cite{ivanov}.

\begin{proposition}
\label{fullPavitzour} Let $A$, $B$, $A_0$ be unital C*-algebras with $A_0\subseteq A$ and $A_0\subseteq B$, and let $\phi: A\rightarrow A_0$ and $\psi: B\rightarrow A_0$ be faithful conditional expectations. Let $(D,\Phi)$ be the reduced amalgamated product of $(A,\phi)$ and $(B,\psi)$, and let $\pi \colon A \,\bgast^{\rm{alg}}_{A_0}\, B\to D$ be the natural map from the algebraic amalgamated product to $D$.

Assume there is a central projection $P\in A_0$ such
that $PA_0=\C P$. Let $a\in P(\ker\phi)P$ and $b\in P(\ker\psi)P$ be unitaries in
$PAP$ and $PBP$ respectively, such that $\phi|_{PAP}$, $\psi|_{PBP}$
are invariant with respect to conjugation by $a$, $b$ respectively.
Let $c\in P(\ker\psi)P$ be a unitary in $PBP$ such that $\psi (b^*c)=0$.
Then for all $x$ in $\pi(P)D\pi(P)$,
$$\Phi (x)\in {\rm \overline{co}} \{ u^*xu: u \text{ unitary in } \pi (P)D\pi (P)\}.$$
It is
enough to take $u$ in the group generated by $a,b,c$.
\end{proposition}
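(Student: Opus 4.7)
The plan is to mimic Avitzour's proof of Proposition \ref{prop:avitzour} inside the corner $\pi(P)D\pi(P)$, exploiting the fact that the hypothesis $PA_0=\C P$ collapses the coefficient algebra to scalars on that corner. To set up, since $P$ is a central projection in $A_0$, the $A_0$-bimodularity of $\phi$ and $\psi$ together with $P\in A_0$ gives $\phi(PAP)\subseteq PA_0P=\C P$ and $\psi(PBP)\subseteq \C P$. Hence $\phi|_{PAP}$ and $\psi|_{PBP}$ are faithful conditional expectations onto $\C P\cong \C$, that is, faithful states on the unital C*-algebras $PAP$ and $PBP$ (with unit $P$). The unitaries $a\in PAP$ and $b,c\in PBP$ then satisfy Avitzour's hypotheses verbatim relative to these states.

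Since $\Phi$ is continuous and the closed convex hull depends continuously on $x$, it suffices to verify the averaging formula on a dense subset of $\pi(P)D\pi(P)$. A dense subset is $\pi(PYP)$, where $Y=A\bgast^{\text{alg}}_{A_0}B$. Every $y\in Y$ decomposes as $\Phi(y)+\sum_{\alpha}y_{\alpha}$, where each $y_{\alpha}=z_1^{\alpha}z_2^{\alpha}\cdots z_{n_{\alpha}}^{\alpha}$ is an alternating word with factors in $\ker\phi$ and $\ker\psi$; note $\Phi(y_{\alpha})=0$ by property (4) of Definition \ref{def:redamprod}. Since $\Phi(PyP)=P\Phi(y)P$, the claim reduces to showing $0\in \overline{\mathrm{co}}\{u^{*}(Py_{\alpha}P)u\}$ for every such word, where $u$ ranges over unitaries in the corner. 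Moreover, any unitary $u$ in the corner satisfies $u=Pu=uP$, so $u^{*}(Py_{\alpha}P)u=u^{*}y_{\alpha}u$, and the outer $P$'s may be dropped.

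The technical heart of the argument is Avitzour's explicit Ces\`aro averaging: construct a sequence of unitaries $w_k$ in the group generated by $a,b,c$ (for instance of the form $(ab)^{k}c(ab)^{-k}$ or variants thereof) and show that $\frac{1}{N}\sum_{k=1}^{N}w_k^{*}y_{\alpha}w_k$ tends to $0$ in norm. Conjugation by such $w_k$ prepends and appends factors from $\ker\phi\cup\ker\psi$ to $y_{\alpha}$, lengthening the alternating word, and the orthogonality conditions $\phi(a)=\psi(b)=\psi(c)=\psi(b^{*}c)=0$ provide the cancellations needed to make the Ces\`aro averages shrink. Because $a,b,c\in\pi(P)D\pi(P)$, every $w_k$ lies in the corner, so the entire approximation takes place inside $\pi(P)D\pi(P)$. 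Since the $w_k$ do not depend on $\alpha$, the averaging works simultaneously for the finite sum $\sum_{\alpha}Py_{\alpha}P$.

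The main obstacle is that $\pi(P)D\pi(P)$ is typically strictly larger than the C*-subalgebra generated by $\pi(PAP)\cup\pi(PBP)$, so one cannot simply quote Proposition \ref{prop:avitzour} for that subalgebra. The factors $z_i^{\alpha}$ in the centered alternating words lie in the full kernels $\ker\phi\subseteq A$ and $\ker\psi\subseteq B$, not in the compressed kernels, so Avitzour's calculations must be carried out in the amalgamated setting. The crucial role of the hypothesis $PA_0=\C P$ is that the $A_0$-valued inner products $\Phi(\xi^{*}\eta)$ appearing in Avitzour's norm estimates, when compressed by $P$ on both sides, take scalar values $P\Phi(\xi^{*}\eta)P\in \C P$; this reduces the module-valued estimates in the GNS construction of $(D,\Phi)$ to scalar estimates, and Avitzour's original argument transfers essentially without change.
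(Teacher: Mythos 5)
Your overall route is the same as the paper's: both proofs transplant Avitzour's Powers-type averaging argument \cite[Proposition 3.1]{avitzour} to the corner $\pi(P)D\pi(P)$, working with alternating words from the algebraic amalgamated product, averaging over unitaries from the group generated by $a,b,c$, and deferring the actual estimates to Avitzour. Your reductions (density of $\pi(PYP)$, splitting off $\Phi(y)$, which is a scalar multiple of the corner unit and hence fixed under conjugation) are fine and are implicit in the paper's sketch as well.

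The one place where your justification is wrong is the final sentence: the hypothesis $PA_0=\C P$ does \emph{not} make the relevant $A_0$-valued inner products scalar. Vectors in $\pi(P)M$ are compressed only on the left, so their inner products need not lie in $\C P$; for instance, in the example $(E(m,n),C(m,n))$ with $P=v$, the vector $\hat{\alpha}_1\in\pi(P)M$ has $\langle \hat{\alpha}_1,\hat{\alpha}_1\rangle=\Phi(\alpha_1^*\alpha_1)=w\notin\C P$. So the estimates do not reduce to scalar ones; they remain genuinely Hilbert-module estimates. What the paper does instead is to exhibit an orthogonal decomposition $\pi(P)M=H_0\oplus H_1$ into complemented submodules, where $H_0=\overline{\pi(W_0)\xi}$ with $W_0$ the span of words beginning with an element of $P\ker\phi$, a constant from $\C P$, or a multiple of $b$, and $H_1=\overline{\pi(W_1)\xi}$ with $W_1$ spanned by words beginning with some $b'\in P\ker\psi$ satisfying $\psi(b^*b')=0$; combined with the observation that $\|x\|=\|x|_{\pi(P)M}\|$ for $x\in\pi(P)D\pi(P)$, Avitzour's argument then runs at the module level, since it only uses orthogonal complementation and Cauchy--Schwarz rather than scalar-valued inner products. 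The true role of $PA_0=\C P$ is to guarantee that $\Phi$ maps the corner into $\C P$ (so the target of the averaging is a multiple of the corner's unit) and to make the definition of $W_0$ work. Replace your scalar-reduction claim by this decomposition and your outline coincides with the paper's proof.
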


\begin{proof}
The proof follows the steps of that of \cite[Proposition
3.1]{avitzour}. Let us just mention what are the main steps. Let $M$ be the Hilbert $A_0$-module arising in the construction of $D$ (recall Definition \ref{def:redamprod}). We identify $\pi$ with the standard representation $A \,\bgast^{\text{alg}}_{A_0}\, B\to \calL_{A_0}(M)$.

Let $W_0\subseteq PA \,\bgast^{\text{alg}}_{A_0}\, B$ be the span of those words starting
with an element from $P\ker\phi$ or from the constants $\C P$ or a multiple of $b$. Let $W_1$
be the span of those words starting with some $b'$ in $P\ker\psi$ such that
$\psi (b^*b')=0$. Let
$$H_i= \ol{\pi (W_i)\xi }\subseteq \pi (P)M.$$
Then  $\pi (P)M= H_0\oplus H_1$. Since for $x\in \pi (P)D\pi (P)$
we have
$$\| x \|_{\calL _{A_0}(M)}=\| x|_{\pi (P)M}\|_{\calL_{A_0}(\pi (P)M)} \,, $$
we can apply the proof of \cite[Proposition 3.1]{avitzour} to show
that $$\Phi (x)\in {\rm \overline{co}} \{ u^*xu: u \text{ unitary in
} \pi (P)D\pi (P)\},$$ as desired.
\end{proof}

\begin{corollary}
\label{fullPavitzourcorolar} Assume that the conditions of the above
proposition hold, and that in addition $\pi (P)$ is a full
projection in $D$. Then $D$ is simple.
\end{corollary}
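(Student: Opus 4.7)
The plan is to show that any nonzero closed two-sided ideal $J$ of $D$ already contains $\pi(P)$; once this is done, fullness of $\pi(P)$ immediately gives $J = D$, so $D$ is simple. Proposition \ref{fullPavitzour} will be used to produce $\pi(P)$ from an arbitrary nonzero element of $J$ via an averaging-over-unitaries argument, played out inside the corner $\pi(P)D\pi(P)$.

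First I would check that $J$ has nonzero intersection with the corner $\pi(P)D\pi(P)$. Suppose, for contradiction, that $\pi(P)J\pi(P)=0$. For any positive $a\in J$, one then gets $\lVert \pi(P)a^{1/2}\rVert^2 = \pi(P)a\pi(P) = 0$, so $\pi(P)a = a\pi(P) = 0$, and by linearity $\pi(P)a = a\pi(P)=0$ for all $a\in J$. Since $J$ is two-sided, this forces $a\cdot d_1\pi(P)d_2 = (ad_1)\pi(P)d_2 = 0$ for all $a\in J$ and $d_1,d_2\in D$, i.e., $aD\pi(P)D=0$. The fullness of $\pi(P)$ then gives $aD=0$, hence $a=0$, contradicting $J\ne 0$. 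So pick a nonzero positive $x\in J\cap \pi(P)D\pi(P)$.

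Next I would observe two things about $\Phi$ on the corner. Since $\Phi$ is $A_0$-bimodular and $\pi$ restricts to the identity on $A_0$ (Definition \ref{def:redamprod}(1)), one has $\Phi(\pi(P)D\pi(P))\subseteq P A_0 P$. The hypothesis $PA_0=\C P$, together with the centrality of $P$, gives $PA_0P=\C P$, so $\Phi$ maps the corner into $\C\pi(P)$. Moreover, by the faithfulness of $\phi$ and $\psi$ and the standard theorem of Ivanov \cite[Theorem 2.1]{ivanov} cited in Subsection \ref{C*redprep}, the expectation $\Phi$ is faithful. Consequently $\Phi(x) = t\,\pi(P)$ for some $t>0$.

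Finally, apply Proposition \ref{fullPavitzour} to $x$: it yields
\[
\Phi(x)\in \overline{\mathrm{co}}\{u^*xu : u \text{ unitary in } \pi(P)D\pi(P)\}.
\]
Each element $u^*xu$ lies in the closed two-sided ideal $J$, hence so does the norm-closed convex hull, giving $t\,\pi(P)=\Phi(x)\in J$ and therefore $\pi(P)\in J$. Fullness of $\pi(P)$ then forces $J=D$, as desired. There is no serious obstacle in this argument: the ingredients (the averaging identity, faithfulness of $\Phi$, and the reduction $PA_0P=\C P$) line up directly, and the only delicate point is the easy but necessary verification that the corner $\pi(P)J\pi(P)$ is nonzero, which is essentially a restatement of fullness.
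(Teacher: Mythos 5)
Your proposal is correct and follows essentially the same route as the paper: fullness of $\pi(P)$ gives a nonzero positive $x\in \pi(P)J\pi(P)$, faithfulness of $\Phi$ (via Ivanov) and $PA_0=\C P$ make $\Phi(x)$ a positive scalar multiple of $\pi(P)$, and the averaging statement of Proposition \ref{fullPavitzour} places $\Phi(x)$, hence $\pi(P)$, in $J$, so $J=D$. The only difference is that you spell out the routine verifications (that $\pi(P)J\pi(P)\ne0$ and that $\Phi$ maps the corner into $\C\pi(P)$) which the paper leaves implicit.
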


\begin{proof} First, recall from \cite[Theorem 2.1]{ivanov} that $\Phi$ is faithful.
Let $J$ be a nonzero closed ideal of $D$. Since $\pi (P)$ is a full
projection in $D$, we have that $\pi (P)J\pi (P)$ is nonzero. Let
$x$ be a nonzero positive element in $\pi (P)J\pi (P)$. Then $\Phi(x)$ is a nonzero scalar multiple of $P$, and so it
follows from Proposition \ref{fullPavitzour} that $\pi (P)\in J$.
Since $\pi (P)$ is full in $D$, we conclude that $J=D$.
\end{proof}

The next example is related to an example considered by McClanahan
in \cite[Example 3.12]{McCla3} (see Proposition \ref{isoMcClan1} for the precise relationship). However we use in the proof our version of
Avitzour's result (Proposition \ref{fullPavitzour}), which is
simpler than the one used in \cite{McCla3}.

\begin{example}  \label{ex:EmnCmn}
For integers $1\le m\le n$, define the separated graph
$(E(m,n),C(m,n))$, where
\begin{enumerate}
\item $E(m,n)^0 := \{v,w\}$ (with $v\ne w$).
\item $E(m,n)^1 :=\{\alpha_1,\dots , \alpha_n,\beta_1,\dots ,\beta_m\}$ (with $n+m$ distinct edges).
\item $s(\alpha_i)=s(\beta _j) =v$ and $r(\alpha _i)=r(\beta _j)=w$
for all $i$, $j$.
\item $C(m,n)= C(m,n)_v := \{X,Y\}$, where $X:= \{\alpha_1,\dots ,\alpha_n\}$ and $Y:=  \{\beta _1,\dots, \beta _m \}$.
\end{enumerate}
By \cite[Proposition 2.12]{AG}, $L(E(m,n),C(m,n))\cong M_{n+1}(L(m,n)) \cong M_{m+1}(L(m,n))$, where $L(m,n)$ is the classical Leavitt algebra of type $(m,n)$. The same argument (by way of universal properties) shows that
\begin{equation}  \label{isomatUnc}
C^*(E(m,n),C(m,n)) \cong M_{n+1}(U^{\text{nc}}_{m,n})\cong
M_{m+1}(U^{\text{nc}}_{m,n}) \,,
\end{equation}
where $U^{\text{nc}}_{m,n}$ denotes the C*-algebra generated by the entries of a universal unitary $m\times n$ matrix, as studied by Brown and McClanahan in \cite{Brown, McCla1, McCla2, McCla3}.
\end{example}

The reduced graph C*-algebra of $(E(m,n),C(m,n))$ is Morita equivalent to McClanahan's example, as we will show in Section \ref{sect:relMcClan}. (Hence, the following proposition can also be obtained as a corollary of McClanahan's results.)

\begin{proposition}
\label{prop:simple2} Let $1<m\le n$, and let $(E,C) := (E(m,n),C(m,n))$ be the separated graph described in Example {\rm\ref{ex:EmnCmn}}. Then the reduced graph
C*-algebra $\Cstred(E(m,n),C(m,n))$ is simple.
\end{proposition}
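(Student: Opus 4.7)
The plan is to deduce simplicity from Corollary \ref{fullPavitzourcorolar}, applied to the reduced amalgamated product description of $\Cstred(E,C)$. Since $E^0 = \{v,w\}$ is finite, $A_0 = \C v \oplus \C w$ is unital, and standard matrix-unit identifications---$e_{ij} := \alpha_i\alpha_j^*$ for $1\le i,j\le n$, $e_{i,n+1} := \alpha_i$, $e_{n+1,j} := \alpha_j^*$, $e_{n+1,n+1} := w$---realize $A_X \cong M_{n+1}(\C)$ with $v = \sum_{i=1}^n e_{ii}$; analogously $A_Y \cong M_{m+1}(\C)$. Using formula \eqref{basicPhi}, the canonical conditional expectation from Theorem \ref{3condexp} computes as $\Phi_X(\alpha_i\alpha_j^*) = \tfrac{1}{n}\delta_{ij}v$ and $\Phi_X(w) = w$, so under the identification $vA_Xv \cong M_n(\C)$ the restriction $\Phi_X|_{vA_Xv}$ is $\tau_n(\cdot)\, v$, where $\tau_n$ is the normalized trace; analogously, $\Phi_Y|_{vA_Yv}$ is $\tau_m(\cdot)\, v$ on $vA_Yv \cong M_m(\C)$.

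Next I would take $P := v$. Since $A_0$ is commutative and $PA_0 = \C v = \C P$, the hypotheses on $P$ in Corollary \ref{fullPavitzourcorolar} are met. Following the strategy of Proposition \ref{prop:twoLeavialgs}, I would choose the unitaries $a := v_n$ in $vA_Xv$, and $b := v_m$, $c := u_m$ in $vA_Yv$ (with $u_k, v_k$ as defined before Proposition \ref{prop:twoLeavialgs}). Because $n,m > 1$, each of $v_n$, $v_m$, $u_m$ has zero trace, placing $a \in v(\ker\Phi_X)v$ and $b, c \in v(\ker\Phi_Y)v$. Unitary invariance of the normalized trace yields invariance of $\Phi_X|_{vA_Xv}$ under conjugation by $a$ and of $\Phi_Y|_{vA_Yv}$ under conjugation by $b$. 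Finally, $v_m^* = v_m^{m-1}$ has zero diagonal, so $v_m^* u_m$ does as well, giving $\Phi_Y(b^*c) = \tau_m(v_m^* u_m)\, v = 0$.

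It remains to verify that $P = v$ is a full projection in $\Cstred(E,C)$. This is immediate from $\alpha_1^* v \alpha_1 = \alpha_1^* \alpha_1 = w$, which places $w$---and hence the unit $v+w$---in the closed ideal generated by $v$. With all hypotheses of Corollary \ref{fullPavitzourcorolar} in place, $\Cstred(E,C)$ is simple. There is no substantial obstacle here: the heart of the argument is identifying the compressed conditional expectations as normalized matrix traces, after which the Avitzour unitaries from the proof of Proposition \ref{prop:twoLeavialgs} transport directly into the corners $vA_Xv$ and $vA_Yv$.
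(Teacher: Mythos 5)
Your proof is correct and follows essentially the same route as the paper: identify $A_X\cong M_{n+1}(\C)$, $A_Y\cong M_{m+1}(\C)$ with the compressed expectations on the corner $v(\cdot)v$ given by normalized traces, take $P=v$ with the Avitzour unitaries $v_n$, $v_m$, $u_m$ in the corners, and apply Proposition \ref{fullPavitzour} and Corollary \ref{fullPavitzourcorolar}. The only (harmless) cosmetic difference is your direct verification of fullness via $\alpha_1^* v\alpha_1=w$, where the paper simply notes that $v$ is full in both matrix factors.
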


\begin{proof} Set $A_0:= \C v\oplus \C w$, and identify $A_0$ with $\C^2$ so that $v$ and $w$ correspond to $(1,0)$ and $(0,1)$ respectively. Set $A:= C^*(E_X)$ and $B:= C^*(E_Y)$, and identify $A$ and $B$ with $M_{n+1}(\C)$ and $M_{m+1}(\C)$ so that $v$ and $w$ correspond to $\diag(1,\dots,1,0)$ and $\diag(0,\dots,0,1)$ in each case. The canonical conditional
expectations $\Phi_X$ and $\Phi _Y$ are easily seen to correspond to
the maps $\phi\colon M_{n+1}(\C)\to \C^2$ and $\psi\colon
M_{m+1}(\C)\to \C^2$ given by
$$\phi \bigl( [a_{ij}] \bigr)= \biggl( \frac{1}{n}\sum _{i=1}^n a_{ii},\, a_{n+1,n+1} \biggr),\qquad
\psi \bigl( [b_{ij}] \bigr)= \biggl( \frac{1}{m}\sum _{j=1}^m b_{jj},\, b_{m+1,m+1} \biggr).$$

Take $P:=v$, and observe that $PA_0=\C P$ and that $P$ is a full
projection in both $A$ and $B$, so certainly $P$ will be a full
projection in $D:=A \,\bgast_{A_0}^{\text{red}}\, B=\Cstred(E,C)$.
Consider the unitaries $a :=\diag(v_n,0)$ in $M_{n+1}(\C)$ and
$b :=\diag(v_m,0)$, $c:=\diag(u_m,0)$ in $M_{m+1}(\C)$ respectively; then $a\in PAP$ and $b,c\in PBP$ with the above identifications. We have
\begin{equation}  \label{kerphipsi}
\phi (a)=\psi (b)=\psi (c)= \psi (b^*c)=0,
\end{equation}
and moreover $\phi|_{PAP}$, $\psi|_{PBP}$ are invariant with respect
to conjugation by $a$, $b$ respectively, and thus the conditions in
Proposition \ref{fullPavitzour} are satisfied. It follows from
Corollary \ref{fullPavitzourcorolar} that $\Cstred(E,C)$ is
a simple C*-algebra.
\end{proof}

\begin{remark}  \label{C*E1nC1n}
To fill in the cases not covered by Proposition \ref{prop:simple2}, let $n\ge1$ and consider  $(E,C):= (E(1,n),C(1,n))$. If $n>1$, then $U^{\text{nc}}_{1,n} \cong \calO_n$ and \eqref{isomatUnc} implies that
$$C^*(E,C) \cong M_2(\calO_n) \cong M_{n+1}(\calO_n),$$
whence $C^*(E,C)$ is simple. In this case, the full and reduced C*-algebras of $(E,C)$ coincide, and $\Cstred(E,C)$ is again simple.

Since $U^{\text{nc}}_{1,1} \cong C(\T)$, the case $m=n=1$ reduces to $C^*(E,C)\cong M_2(C(\T))$ by \eqref{isomatUnc}. Following the construction in the proof of \cite[Proposition 2.12]{AG}, there is an explicit isomorphism
$$\psi: C^*(E,C) \longrightarrow M_2(C(\T)) = M_2(\C)\otimes_{\C} C(\T)$$
sending
$$v\longmapsto e_{11} \,,\qquad w\longmapsto e_{22} \,,\qquad \alpha\longmapsto ze_{12} \,,\qquad \beta\longmapsto e_{12} \,,$$
where $z$ is the canonical unitary generator of $C(\T)$. We shall use this isomorphism to see that $C^*(E,C)= \Cstred(E,C)$. Thus, the case $m=n=1$ is the only one for which $\Cstred(E,C)$ is not simple.

Identify $A:= C^*(E_X)$ and $B:= C^*(E_Y)$ with their canonical images in $C^*(E,C)$, and set $A_0 := \C v\oplus \C w$. There is a faithful conditional expectation
$$\phi\otimes\tau : M_2(\C) \otimes_{\C} C(\T) \longrightarrow \begin{pmatrix} \C&0\\ 0&\C \end{pmatrix} \,,$$
where $\phi \left( \begin{smallmatrix} a&b\\ c&d \end{smallmatrix} \right)= \left( \begin{smallmatrix} a&0\\ 0&d \end{smallmatrix} \right)$ for $\left( \begin{smallmatrix} a&b\\ c&d \end{smallmatrix} \right) \in M_2(\C)$ and $\tau$ is the canonical faithful trace on $C(\T)$. This corresponds to a faithful conditional expectation $\Phi: C^*(E,C) \rightarrow A_0$, because $\psi$ restricts to an isomorphism of $A_0$ onto $\left( \begin{smallmatrix} \C&0\\ 0&\C \end{smallmatrix} \right)$. We claim that $(C^*(E,C),\Phi)$ satisfies the conditions of Definition \ref{def:redamprod} to be the reduced amalgamated product of $(A,\Phi_X)$ and $(B,\Phi_Y)$. Conditions (1) and (2) are clear, (3) is easily checked, and (5) follows from the faithfulness of $\Phi$. To check (4), observe that since $\ker\Phi_X= \C\alpha\oplus \C\alpha^*$ and $\ker\Phi_Y= \C\beta\oplus \C\beta^*$, it suffices to show that $\Phi$ vanishes on all finite paths of the forms
$$\alpha\beta^*\alpha\beta^*\cdots \,,\qquad \alpha^*\beta\alpha^*\beta\cdots \,,\qquad \beta\alpha^*\beta\alpha^*\cdots \,,\qquad \beta^*\alpha\beta^*\alpha\cdots \,.$$
However, $\psi$ maps these paths to products of the form $z^ke_{ij}$ or $(z^*)^ke_{ij}$ with $k\ge1$, and $\phi\otimes\tau$ vanishes on such products.

Therefore $(C^*(E,C),\Phi)= (A,\Phi_X) \,\bgast^{\text{red}}_{A_0}\, (B,\Phi_Y)$ and so $C^*(E,C)= \Cstred(E,C)$ in this case, as claimed.
\end{remark}

Finally, we show with another example that the structure of
hereditary $C$-saturated subsets of $E^0$ is not respected in
$\Cstred(E,C)$ in general, that is, there can be two
different hereditary $C$-saturated subsets $H_1$ and $H_2$ which
generate the same ideal of $\Cstred(E,C)$. This heavily
contrasts with the situation for the full graph C*-algebra
$C^*(E,C)$.

Let $k,l,m,n\ge2$ be integers. Consider the separated graph $(E,C)$, where
\begin{enumerate}
\item $E^0 := \{v,w_1,w_2\}$ (with $3$ distinct vertices).
\item $E^1 :=\{\alpha_1,\dots , \alpha_k,\beta_1,\dots ,\beta_l, \gamma _1,\dots ,\gamma _m,\delta_1,\dots,\delta_n\}$
(with $k+l+m+n$ distinct edges).
\item $s(e)=v$ for all $e\in E^1$, while $r(\alpha_i)=r(\beta_j)=w_1$ for all $i$, $j$, and $r(\gamma_i)= r(\delta_j)= w_2$ for all $i$, $j$.
\item $C=C_v := \{X,Y\}$ where
$$X := \{\alpha_1,\dots ,\alpha_k,\gamma_1,\dots ,\gamma _m\}, \qquad\qquad Y:= \{\beta _1,\dots, \beta_l,\delta_1,\dots,\delta_n\}.$$
\end{enumerate}
A picture of the graph $E$ for the case $k=l=m=n=2$ is shown below.

$$\xymatrixrowsep{4pc}\xymatrixcolsep{6pc}\def\labelstyle{\displaystyle}
\xymatrix{
 &v \ar@/_3.5ex/[dl]^{\alpha_1} \ar@/_6ex/[dl]_{\alpha_2} \ar@/^3.5ex/[dl]_{\beta_1} \ar@/^6ex/[dl]^{\beta_2}  \ar@/^3.5ex/[dr]_{\gamma_1} \ar@/^6ex/[dr]^{\gamma_2} \ar@/_3.5ex/[dr]^{\delta_1} \ar@/_6ex/[dr]_{\delta_2}  \\
 w_1  &&w_2
 }$$
 \medskip

Observe that $H_1=\{w_1\}$ and $H_2=\{w_2\}$ are both hereditary
$C$-saturated subsets of $E^0$. However, by the next proposition,
both $H_1$ and $H_2$ generate the full algebra
$\Cstred(E,C)$.

\begin{proposition}
\label{prop:heredbreak} Let $(E,C)$ be the separated graph described
above. Then the reduced graph C*-algebra $\Cstred(E,C)$
is simple.
\end{proposition}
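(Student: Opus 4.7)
The plan is to apply Corollary \ref{fullPavitzourcorolar} to the pair $(A, B) = (C^*(E_X), C^*(E_Y))$ amalgamated over $A_0 := C_0(E^0) = \C v \oplus \C w_1 \oplus \C w_2$, with $P := v$, closely mimicking the proof of Proposition \ref{prop:simple2}. The key structural observation is that the corners $vAv$ and $vBv$ are not single matrix blocks but direct sums of two. Indeed, from $r(\alpha_i) = w_1 \perp w_2 = r(\gamma_j)$ we get $\alpha_i \gamma_j^* = (\alpha_i w_1)(w_2 \gamma_j^*) = 0$, and similarly for all other cross products between $\alpha$'s and $\gamma$'s in $vAv$. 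The (SCK1) relations make $\{\alpha_i \alpha_j^*\}_{1\le i,j\le k}$ and $\{\gamma_i \gamma_j^*\}_{1 \le i,j \le m}$ into systems of matrix units, so $vAv \cong M_k(\C) \oplus M_m(\C)$ with identity $v = \sum_i \alpha_i \alpha_i^* + \sum_j \gamma_j \gamma_j^*$; analogously $vBv \cong M_l(\C) \oplus M_n(\C)$. Using Theorem \ref{3condexp}, the restriction $\Phi_X|_{vAv}$ sends $(X_1, X_2) \mapsto (k+m)^{-1}(\trace(X_1) + \trace(X_2))\, v$, and analogously for $\Phi_Y|_{vBv}$.

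Next I would exhibit the unitaries required by Proposition \ref{fullPavitzour}. Clearly $P = v$ is central in the commutative algebra $A_0$, with $PA_0 = \C P$. Set
\[ a := v_k \oplus v_m \in vAv \qquad \text{and} \qquad b := v_l \oplus v_n, \quad c := u_l \oplus u_n \in vBv, \]
where $u_t$ and $v_t$ are the unitaries defined just before Proposition \ref{prop:twoLeavialgs}. Since $k,l,m,n \ge 2$, the matrices $u_t, v_t$ are all traceless, so the formulas for $\Phi_X|_{vAv}$ and $\Phi_Y|_{vBv}$ yield $\phi(a) = \psi(b) = \psi(c) = 0$. A direct diagonal computation shows $\trace(v_l^* u_l) = \trace(v_n^* u_n) = 0$ (the diagonal of $v_t^* u_t$ vanishes because $v_t^*$ has zero diagonal while $u_t$ is diagonal), whence $\psi(b^* c) = 0$. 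Invariance of $\phi|_{vAv}$ and $\psi|_{vBv}$ under conjugation by $a$ and $b$ respectively is automatic, as each restriction is a positive scalar multiple of the sum of the standard traces on the two summands.

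Finally, I would verify that $\pi(v)$ is a full projection in $D := \Cstred(E,C)$: the ideal generated by $v$ contains $\alpha_i = v\alpha_i$ and $\gamma_j = v\gamma_j$, hence $w_1 = \alpha_1^* \alpha_1$ and $w_2 = \gamma_1^* \gamma_1$, so it contains all of $E^0$ and therefore equals $D$. Corollary \ref{fullPavitzourcorolar} then gives simplicity of $\Cstred(E,C)$. The main obstacle is the first step — recognizing that $vAv$ splits as $M_k(\C) \oplus M_m(\C)$ (and likewise $vBv$) rather than as a single $M_{k+m}(\C)$ block — because it dictates both the pool of available unitaries in the kernel and the precise formula for the conditional expectation; once this splitting is in hand, the argument is essentially the two-vertex analogue of Proposition \ref{prop:simple2}.
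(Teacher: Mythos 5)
Your proposal is correct and is essentially the paper's own argument: it applies Proposition \ref{fullPavitzour} and Corollary \ref{fullPavitzourcorolar} with $P=v$ and the same unitaries $a=v_k\oplus v_m$, $b=v_l\oplus v_n$, $c=u_l\oplus u_n$, the only cosmetic difference being that you compute inside the corners $vAv\cong M_k(\C)\oplus M_m(\C)$ and $vBv\cong M_l(\C)\oplus M_n(\C)$ rather than identifying $A\cong M_{k+1}(\C)\times M_{m+1}(\C)$ and $B\cong M_{l+1}(\C)\times M_{n+1}(\C)$ globally as the paper does. Your identification of the corners, the formula for $\Phi_X|_{vAv}$, the tracelessness and invariance checks, and the fullness of $v$ all match the paper's verification.
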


\begin{proof} Set $A_0 := \C v\oplus \C w_1\oplus \C w_2$, and identify $A_0$ with $\C^3$ so that $v$, $w_1$, $w_2$ correspond to $(1,0,0)$, $(0,1,0)$, $(0,0,1)$ respectively. Set $A:= C^*(E_X)$ and $B:= C^*(E_Y)$, and identify $A$ and $B$ with $M_{k+1}(\C)\times M_{m+1}(\C)$ and $M_{l+1}(\C)\times M_{n+1}(\C)$ so that $v$, $w_1$, $w_2$ correspond to
$$\bigl( \diag(1,\dots,1,0) ,\, \diag(1,\dots,1,0) \bigr), \qquad \bigl( \diag(0,\dots,0,1) ,\, 0 \bigr), \qquad \bigl( 0 ,\, \diag(0,\dots,0,1) \bigr),$$
respectively, in each case. The canonical conditional expectations $\Phi_X$ and $\Phi_Y$ correspond to the maps $\phi: M_{k+1}(\C)\times M_{m+1}(\C) \to A_0$ and $\psi: M_{l+1}(\C)\times M_{n+1}(\C) \to A_0$ given by
\begin{align*}
\phi \bigl( [a_{ij}],[a'_{ij}] \bigr) &= \frac1{k+m} \biggl( \sum_{i=1}^k a_{ii}+ \sum_{j=1}^m a'_{jj} \biggr)v + a_{k+1,k+1}w_1+ a'_{m+1,m+1}w_2  \\
\psi\bigl( [b_{ij}],[b'_{ij}] \bigr) &= \frac1{l+n} \biggl( \sum_{i=1}^l b_{ii}+ \sum_{j=1}^n b'_{jj} \biggr)v + b_{l+1,l+1}w_1+ b'_{n+1,n+1}w_2 \,.
\end{align*}

Take $P:=v$, and observe that $PA_0= \C P$ and that $P$ is a full
projection in both $A$ and $B$, so certainly $P$ will be a full
projection in $D:=A \,\bgast_{A_0}^{\text{red}}\, B=\Cstred(E,C)$.
Consider the unitaries
\begin{align*}
a &:= \bigl( \diag(v_k,0),\, \diag(v_m,0) \bigr) \in PAP  \\
b &:= \bigl( \diag(v_l,0),\, \diag(v_n,0) \bigr) \in PBP  &c &:= \bigl( \diag(u_l,0),\, \diag(u_n,0) \bigr) \in PBP.
\end{align*}
Then  \eqref{kerphipsi} holds, and moreover $\phi|_{PAP}$, $\psi|_{PBP}$ are
invariant with respect to conjugation by $a$, $b$ respectively, so
that the conditions in Proposition \ref{fullPavitzour} are
satisfied. It follows from Corollary \ref{fullPavitzourcorolar} that
$\Cstred(E,C)$ is a simple C*-algebra.
\end{proof}

\section{$K$-theory}
\label{sect:K-theory}

Our aim in this section is to compute the $K$-theory of the full
graph C*-algebras of finitely separated graphs. This will use the powerful
results in \cite{Thomsen}.

We recall here the main result from \cite{Thomsen} used in our
computations; it is a particular case of \cite[Theorem 2.7]{Thomsen}.

\begin{theorem}
\label{thm:thomsen}  Let $A_0$, $A_1$, $A_2$ be separable
C*-algebras. Assume that $i_k\colon A_0\to A_k$, for $k=1,2$, are
embeddings, and that $A_0$ is finite-dimensional. Let $j_k\colon
A_k\to A_1 \,\bgast_{A_0}\, A_2$, for $k=1,2$, be the canonical maps. Then there is a $6$-term
exact sequence:
\begin{equation}
\label{eq:6first}
\begin{CD}
K_0(A_0) @>{\;({i_1}_*,{i_2}_*)\;}>> K_0(A_1)\oplus K_0(A_2) @>{\;{j_1}_*-{j_2}_*\;}>> K_0(A_1 \,\bgast_{A_0}\, A_2)\\
@AAA & & @VVV \\
K_1(A_1 \,\bgast_{A_0}\, A_2) @<{\;{j_1}_*-{j_2}_*\;}<< K_1(A_1)\oplus K_1(A_2)
@<{\;({i_1}_*,{i_2}_*)\;}<< K_1(A_0)
\end{CD}
\end{equation}
\end{theorem}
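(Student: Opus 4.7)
The claim is stated explicitly as the two-algebra specialization of Thomsen's general result \cite[Theorem 2.7]{Thomsen}, so the honest plan is simply to extract it from there. Thomsen's theorem asserts the existence of a six-term $K$-theory exact sequence for the full amalgamated free product of an arbitrary finite family of separable C*-algebras over a common finite-dimensional subalgebra. Specializing to the family $\{A_1, A_2\}$ yields exactly the diagram \eqref{eq:6first}. My first task would therefore be routine: check that the hypotheses needed to invoke Thomsen (separability of all algebras involved, finite-dimensionality of $A_0$, and that the $i_k$ are embeddings) appear verbatim in the statement, and verify that Thomsen's maps in the two-algebra case match $(({i_1}_*, {i_2}_*))$ and $({j_1}_* - {j_2}_*)$ up to sign conventions.

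If instead one wanted to reprove the result directly, the natural approach would be a Mayer--Vietoris argument. The difficulty is that $A_1 \,\bgast_{A_0}\, A_2$ is a pushout in $\Cstaralg$ rather than a pullback, so one cannot use the Mayer--Vietoris sequence for pullback diagrams out of the box. The standard workaround is to manufacture an auxiliary pullback whose $K$-theory agrees with that of the amalgamated free product: replace each inclusion $i_k\colon A_0 \to A_k$ by a surjection $B_k \twoheadrightarrow A_0$ from a larger algebra (for instance a mapping cylinder or a Toeplitz-type extension) with $KK$-trivial kernel, and then realize $A_1 \,\bgast_{A_0}\, A_2$ up to $KK$-equivalence as the pullback of the $B_k$'s over a common quotient. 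The Mayer--Vietoris six-term sequence for this pullback is then (after identifying $K_*(B_k)\cong K_*(A_k)$) exactly \eqref{eq:6first}.

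The key role of finite-dimensionality of $A_0$ in this strategy is to guarantee the necessary lifting and semiprojectivity properties. Finite-dimensional C*-algebras are semiprojective, so embeddings $A_0 \hookrightarrow A_k$ lift through the surjections $B_k \twoheadrightarrow A_k$, and this is what makes the comparison between the pushout and the auxiliary pullback work on the level of $K$-theory. Without this hypothesis the comparison fails, which is why Thomsen's statement (and our use of it) is restricted to finite-dimensional $A_0$.

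The main obstacle in a from-scratch proof would be the construction and well-definedness of the connecting maps $K_0(A_1 \,\bgast_{A_0}\, A_2) \to K_1(A_0)$ and $K_1(A_1 \,\bgast_{A_0}\, A_2) \to K_0(A_0)$, together with exactness at the middle terms $K_i(A_1) \oplus K_i(A_2)$. Formally the boundary maps come from the Mayer--Vietoris sequence of the auxiliary pullback, but verifying that the resulting maps are independent of the lifting chosen and compatible with the canonical maps $j_k$ requires careful diagram chases; this is the technical heart of \cite{Thomsen}. For the purposes of Section \ref{sect:K-theory}, however, it suffices to invoke the theorem as a black box.
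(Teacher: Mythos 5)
Your approach coincides with the paper's: the paper offers no proof of this statement but simply records it as the two-algebra special case of \cite[Theorem 2.7]{Thomsen}, exactly as in your first paragraph, and your hypothesis-matching check is all that is required. The Mayer--Vietoris sketch in your later paragraphs is extra commentary on Thomsen's own proof and is not needed here.
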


For some direct applications of this theorem to the K-theory of C*-algebras
of separated graphs, see \cite[Section 5]{Dun}.

In order to state our result, we need some preparation. If $E$ is a
row-finite (non-separated) graph, we will denote by $A_E'$ the
adjacency matrix of $E$, that is, the matrix $(a(v,w))_{v,w\in E^0}$ in $\Z^{E^0\times E^0}$ where $a(v,w) := |s_E^{-1}(v) \cap r_E^{-1}(w)|$, that is, the number of arrows from $v$ to $w$ in $E$. Write $A^t_E$ and $1$ for
the matrices in $\Z^{E^0\times E^0\setminus\Si(E)}$ which result
from the transpose of $A'_E$ and from the identity matrix after removing the
columns indexed by sinks. Then the $K$-theory of $C^*(E)$ is
given by the formulas
\begin{align}
K_0(C^*(E)) &\cong \coker \bigl( 1-A^t_E\colon
\Z^{(E^0\setminus \Si (E))}\longrightarrow \Z^{(E^0)} \bigr)  \label{eq:K0THCstarE} \\
K_1(C^*(E)) &\cong \ker \bigl( 1-A^t_E\colon \Z^{(E^0\setminus \Si
(E))}\longrightarrow \Z^{(E^0)} \bigr)  \label{eq:K1THCstarE}
\end{align}
\cite[Theorem 3.2]{RS}. Further, the formulation of \cite[Theorem 3.2]{RS}
given in \cite[Theorem 2.3.9]{TomfMal} shows that the isomorphism
of \eqref{eq:K0THCstarE} sends $[v]$ to the coset of $\delta_v$
for all $v\in E^0$, where $(\delta_v)_{v\in E^0}$ denotes the canonical basis of $\Z^{(E^0)}$.

We now present a corresponding result for any finitely separated graph $(E,C)$. The \emph{adjacency matrix} of
$(E,C)$ is the matrix $A_{(E,C)}' :=(a(v,w))_{v,w\in E^0}$ such that the entry
$a(v,w)$ is the function $X \mapsto a_X(v,w)$ in $\Z^{C_v}$ where $a_X(v,w)$ equals the number of arrows in $X$ from
$v$ to $w$, for any $v,w\in E^0$ and $X\in C_v$.
We denote by $1_C\colon
\Z^{(C)}\rightarrow \Z^{(E^0)}$ and $A_{(E,C)}^t\colon \Z^{(C)}\rightarrow \Z^{(E^0)}$ the
homomorphisms defined by
$$1_C(\delta_X)= \delta_v \qquad\text{and}\qquad A^t_{(E,C)}(\delta_X)= \sum _{w\in
E^0}a_X(v,w)\delta_w \qquad\quad (v\in E^0,\;X\in C_v ),$$
where $(\delta_X)_{X\in C}$ denotes the canonical basis of $\Z^{(C)}$.

With this notation, the $K$-theory of $C^*(E,C)$ has formulas which
look very similar to the ones for the non-separated case:

\begin{theorem}
\label{thm:KTHsepgraph} Let $(E,C)$ be a finitely separated graph,
and adopt the notation above. Then the $K$-theory of $C^*(E,C)$ is
given as follows:
\begin{align}
K_0(C^*(E,C)) &\cong \coker \bigl( 1_C-A_{(E,C)}^t\colon \Z^{(C)}\longrightarrow \Z^{(E^0)} \bigr),  \label{eq:K0THCsep} \\
K_1(C^*(E,C)) &\cong \ker \bigl( 1_C-A_{(E,C)}^t\colon \Z^{(C)}\longrightarrow \Z^{(E^0)} \bigr).  \label{eq:K1THCsep}
\end{align}
Further:
\begin{equation}  \label{vtodeltav}
\text{The isomorphism of \eqref{eq:K0THCsep} sends\ }[v] \text{\ to the coset of\ } \delta_v \text{\ for all\ } v\in E^0.
\end{equation}
\end{theorem}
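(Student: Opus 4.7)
My plan is to realize $C^*(E,C)$ as the amalgamated coproduct of the graph C*-algebras $A_X = C^*(E_X)$ over $A_0 = C_0(E^0)$ (Proposition~\ref{prop:amalg}), and then iteratively apply Thomsen's six-term exact sequence (Theorem~\ref{thm:thomsen}). First, I would reduce to the case where $(E,C)$ is finite. Using \cite[Proposition 3.6]{AG} and the continuity of $C^*(-)$ (Proposition~\ref{FSGrC*functor}), we have $(E,C) = \varinjlim (F,D)$ over its filtered system of finite complete subobjects, and $C^*(E,C) = \varinjlim C^*(F,D)$. Both $K_*$ of C*-algebras and the kernel/cokernel of abelian group maps commute with filtered colimits, and the maps $1_C - A^t_{(E,C)}$ are manifestly the direct limits of their finite counterparts. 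This reduces the problem to the finite case.

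For finite $(E,C)$, I would induct on $|C|$. The cases $|C|=0$ (all vertices are sinks, so $C^*(E,C) = \C^{E^0}$) and $|C|=1$ (say $C = \{X\}$, giving $C^*(E,C) \cong C^*(E_X)$) follow from \cite[Theorem 3.2]{RS}, with the refinement $[v] \mapsto [\delta_v]$ from \cite[Theorem 2.3.9]{TomfMal}. For $|C| \geq 2$, fix $X_0 \in C$, set $C' := C \setminus \{X_0\}$, and let $E'$ be the subgraph of $E$ with $(E')^0 = E^0$ and $(E')^1 = E^1 \setminus X_0$. Then $(E',C')$ is a finitely separated graph with $|C'|<|C|$, and Proposition~\ref{prop:amalg} gives $C^*(E,C) \cong A_{X_0} *_{A_0} B$, where $B := C^*(E',C')$. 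Since $A_0 = \C^{E^0}$ satisfies $K_0(A_0) = \Z^{E^0}$ and $K_1(A_0)=0$, the six-term sequence collapses to
\begin{equation*}
0 \to K_1(A_{X_0}) \oplus K_1(B) \to K_1(C^*(E,C)) \to \Z^{E^0} \to K_0(A_{X_0}) \oplus K_0(B) \to K_0(C^*(E,C)) \to 0.
\end{equation*}

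Write $\phi := 1_C - A^t_{(E,C)}$, $\phi' := 1_{C'} - A^t_{(E',C')}$, and $\phi_{X_0} := 1 - A^t_{E_{X_0}}$. By the inductive hypothesis (and the base case for $A_{X_0}$), $K_0(A_{X_0}) \cong \coker(\phi_{X_0})$ and $K_0(B) \cong \coker(\phi')$, both sending $[v] \mapsto [\delta_v]$. Under these identifications the map $\Z^{E^0} \to \coker(\phi_{X_0}) \oplus \coker(\phi')$ becomes $\delta_v \mapsto ([\delta_v],[\delta_v])$. Because $\phi(\Z^{(C)}) = \phi_{X_0}(\Z) + \phi'(\Z^{(C')})$, the formula $([x],[y]) \mapsto [x-y]$ defines a surjection $\coker(\phi_{X_0}) \oplus \coker(\phi') \to \coker(\phi)$ whose kernel is exactly the image of $\Z^{E^0}$ under $\delta_v \mapsto ([\delta_v],[\delta_v])$. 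Composing with the surjection from the exact sequence gives $K_0(C^*(E,C)) \cong \coker(\phi)$, and tracing canonical classes confirms $[v] \mapsto [\delta_v]$.

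For $K_1$, the exact sequence yields a short exact sequence $0 \to \ker(\phi_{X_0}) \oplus \ker(\phi') \to K_1(C^*(E,C)) \to \phi_{X_0}(\Z) \cap \phi'(\Z^{(C')}) \to 0$, and direct inspection produces a parallel sequence $0 \to \ker(\phi') \oplus \ker(\phi_{X_0}) \to \ker(\phi) \to \phi_{X_0}(\Z) \cap \phi'(\Z^{(C')}) \to 0$ via $(x,n) \mapsto \phi_{X_0}(n) = -\phi'(x)$. Since the right-hand group is a subgroup of $\Z^{(E^0)}$, hence free abelian, both sequences split, yielding $K_1(C^*(E,C)) \cong \ker(\phi)$. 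The main obstacle is precisely this last identification: the splittings give only an abstract isomorphism rather than one explicitly computed from Thomsen's connecting map $\partial$; however, since the theorem only asserts an isomorphism, this suffices. A secondary technical point is propagating the $[v] \mapsto [\delta_v]$ normalization through each inductive step, which follows from the functoriality of the Thomsen sequence once one notes that the inclusions $A_{X_0}, B \hookrightarrow C^*(E,C)$ preserve vertex projections.
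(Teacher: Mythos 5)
Your proposal is correct and follows essentially the same route as the paper: reduce to finite $(E,C)$ by continuity, induct on $|C|$ by splitting off one set $X_0\in C$, write $C^*(E,C)$ as an amalgamated product over $A_0=\C^{E^0}$, and apply Theorem \ref{thm:thomsen} with $K_1(A_0)=0$, identifying $K_0$ as the cokernel of the diagonal map $\delta_v\mapsto([\delta_v],[\delta_v])$ exactly as in the paper. The one genuine difference is the $K_1$ endgame: the paper splits into two cases (the case where $X_0$ is a single loop, where the corresponding column map $B$ vanishes, versus the case where $B$ is injective) and then uses injectivity of $B$ together with cyclicity of $A(\Z^{(C')})\cap B(\Z\delta_{X_0})$ to identify the resulting direct sum with $\ker(1_C-A^t_{(E,C)})$; you instead compare the Thomsen extension with the tautological extension $0\to\ker(\phi')\oplus\ker(\phi_{X_0})\to\ker(\phi)\to\phi_{X_0}(\Z)\cap\phi'(\Z^{(C')})\to 0$ and observe that both split because the common quotient is a subgroup of $\Z^{(E^0)}$, hence free abelian. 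This is a valid and uniform shortcut (no case analysis, no cyclicity needed), and it suffices because \eqref{eq:K1THCsep} asserts only an abstract isomorphism; what it gives up, as you note, is any explicit description of the $K_1$ isomorphism, which the paper's argument also does not claim. Your handling of \eqref{vtodeltav} through the induction matches the paper's commutative-diagram bookkeeping.
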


\begin{proof}
Since $K$-theory is continuous, we may reduce to the case where $E$
is a finite graph by using Proposition \ref{FSGrC*functor} and \cite[Proposition 3.5 and comments after
Definition 8.4]{AG}. Set $A_0 := C^*((E^0,\emptyset),\emptyset) =C_0(E^0)$, which is a finite dimensional commutative C*-algebra under our current assumption. There is an isomorphism $\kappa: K_0(A_0) \to \Z^{(E^0)}$ sending $[v] \mapsto \delta_v$ for $v\in E^0$.

For a finite separated graph $(E,C)$ (meaning that $E^0$, $E^1$, and
$C$ are all finite), we will show the results by induction on $|C|$.
The case where $|C|\le1$ follows from the results for non-separated
graphs. Assume that $n>1$ and that the results are true for finite
separated graphs $(E',C')$ with $|C'|<n$. Let $(E,C)$ be a finite
separated graph with $|C|=n$, and select $X\in C_v$ for some $v\in
E^0 \setminus \Si(E)$. Let $C' :=C\setminus \{X\}$, and consider the
separated graphs $(E_1,C')$ and $(E_2,\{X\})$, where
$(E_1)^0=(E_2)^0=E^0$ and $(E_1)^1=\bigsqcup _{X\in C'} X$,
$(E_2)^1= X$. Then we have
$$C^*(E,C)=C^*(E_1,C') \,\bgast_{A_0}\, C^*(E_2,\{X\}),$$
relative to the canonical embeddings $i_1: A_0 \to C^*(E_1,C')$ and
$i_2: A_0 \to C^*(E_2,\{X\})$ corresponding to the inclusion
morphisms $((E^0,\emptyset),\emptyset) \to (E_1,C')$ and
$((E^0,\emptyset),\emptyset) \to (E_2,\{X\})$ in $\FSGr$. Therefore
we can apply Thomsen's result and the induction hypothesis to
compute $K_0(C^*(E,C))$. By induction, there is a commutative
diagram as follows, where $\pi_{C'}$ and $\pi_{\{X\}}$ are the
obvious quotient maps.
$$\begin{CD}
K_0(C^*(E_1,C')) @<{\;{i_1}_*\;}<<  K_0(A_0) @>{\;{i_2}_*\;}>>  K_0(C^*(E_2,\{X\}))  \\
@V{\cong}VV  @V{\kappa}V{\cong}V  @VV{\cong}V  \\
\coker(1_{C'}-A^t_{(E_1,C')}) @<{\;\pi_{C'}\;}<<  \Z^{(E^0)}
@>{\;\pi_{\{X\}}\;}>>  \coker(1_{\{X\}}-A^t_{(E_2,\{X \})})
\end{CD}$$
(The diagram is commutative because \eqref{vtodeltav} holds for the
cases $(E_1,C')$ and $(E_2,\{X\})$.) Since $K_1(A_0)=0$, it follows
from Theorem \ref{thm:thomsen} that $K_0(C^*(E,C))$ is isomorphic to
the cokernel of the map
\begin{equation}
\label{eq:K0proof}
\begin{CD}
\Z^{(E^0)} @>{\;(\pi_{C'},\pi_{\{X\}})\;}>> \Z^{(E^0)}/
(1_{C'}-A^t_{(E_1,C')})\Z^{(C')}\bigoplus \Z^{(E^0)}/
(1_{\{X\}}-A^t_{(E_2,\{X \})})\Z^{(\{X\})} \,,
\end{CD}
\end{equation}
via an isomorphism that sends $[v]$ to the coset of $(\delta_v+
(1_{C'}-A^t_{(E_1,C')})\Z^{(C')},\, 0)$ for $v\in E^0$. The cokernel
of \eqref{eq:K0proof} is easily seen to be isomorphic to
$$\Z^{(E^0)} {\biggm/} \biggl( (1_{C'}-A^t_{(E_1,C')}) \Z^{(C')} + (1_{\{X\}}-A^t_{(E_2,\{X \})})
\Z^{(\{X\})} \biggr) = \Z^{(E^0)} {\biggm/} \biggl(
(1_{C}-A^t_{(E,C)}) \Z^{(C)} \biggr),$$ in view of the exact
sequence
\begin{multline*}  \begin{CD}
\Z^{(E^0)} @>{\;(\pi_{C'},\pi_{\{X\}})\;}>>
\coker(1_{C'}-A^t_{(E_1,C')}) \oplus \coker(1_{\{X\}}-A^t_{(E_2,\{X
\})})
\end{CD}  \\
\begin{CD}
@>{\;(q_1,-q_2)\;}>>\coker(1_C-A_{(E,C)}^t) @>>> 0 \,,
\end{CD} \end{multline*}
 where $q_1$ and $q_2$ are the natural quotient maps.
We thus obtain both \eqref{eq:K0THCsep} and \eqref{vtodeltav}.

Now we want to compute $K_1(C^*(E,C))$. From (\ref{eq:6first}) and the above observations, we
get a short exact sequence:
\begin{equation}
\label{eq:K1comp1} 0 \to K_1(C^*(E_1,C'))\oplus K_1(C^*(E_2,\{X\}))
\to K_1(C^*(E,C)) \to \ker (\pi_{C'},\pi_{\{X\}}) \to 0.
\end{equation}
Set $A :=1_{C'}-A^t_{(E_1,C')}$ and $B
:=1_{\{X\}}-A^t_{(E_2,\{X\})}$.

We distinguish two cases.

\noindent {\bf Case 1:} $X$ consists of a single loop at $v$. In this case, $B=0$, and so
$\pi_{\{X\}}$ is injective and $\ker (\pi_{C'},\pi_{\{X\}})=0$. By
using the induction hypothesis for $K_1$, we get
$$K_1(C^*(E,C))\cong \ker (A)\oplus \Z\delta_X =\ker (1_C-A^t_{(E,C)}),$$
as desired.

\noindent {\bf Case 2:} $|X|>1$ or $X$ consists of a single edge
from $v$ to some different vertex. Then $B\colon \Z\delta_X \to
\Z^{(E^0)}$ is injective, so that $K_1(C^*(E_2,\{X\}))=\ker (B)=0$,
and we get from (\ref{eq:K1comp1}):
\begin{equation}  \label{K1EC}
\begin{aligned}
K_1(C^*(E,C)) &\cong K_1(C^*(E_1,C')) \oplus \ker(\pi_{C'},\pi_{\{X\}}) \\
 &\cong \ker \bigl( A\colon \Z^{(C')}\to \Z^{(E^0)} \bigr) \oplus \bigl( A(\Z^{(C')})\cap B(\Z\delta_X) \bigr).
\end{aligned}
\end{equation}
Now using that $A(\Z^{(C')})\cap B(\Z\delta_X)$ is cyclic and $B$ is
injective, it is straightforward to show that the last direct sum in \eqref{K1EC} is isomorphic to
$$\ker \bigl( \begin{pmatrix} A & B
\end{pmatrix} \colon \Z^{(C)}\to \Z^{(E^0)} \bigr).$$
Since $\begin{pmatrix} A & B
\end{pmatrix} =1_C-A^t_{(E,C)}$, we get the desired result for $K_1(C^*(E,C))$.
\end{proof}

As an example, we consider the separated graph
$(E(m,n),C(m,n))$ of Example
\ref{ex:EmnCmn}, for $1\le m\le n$. Now \eqref{isomatUnc} and Theorem \ref{thm:KTHsepgraph} give
\begin{align*}
K_0(U^{\text{nc}}_{m,n}) &\cong \coker \left( \begin{pmatrix}
1 & 1 \\ -n & -m
\end{pmatrix} : \Z^2\to \Z^2 \right) \cong \begin{cases}  \Z & \text{if } n=m\\ \Z_{n-m} & \text{if } n>m \end{cases} \\
K_1(U^{\text{nc}}_{m,n}) &\cong \ker\left( \begin{pmatrix}
1 & 1 \\ -n & -m
\end{pmatrix} : \Z^2\to \Z^2 \right) \cong \begin{cases} \Z & \text{if } n=m\\
0 & \text{if } n>m
\end{cases} .
\end{align*}
This confirms a conjecture of McClanahan \cite[Conjecture, p.~1067]{McCla2}, and recovers
\cite[Corollary 2.4]{McCla1} in the case $n=m$.

\section{Relationships with McClanahan's examples}
\label{sect:relMcClan}

We show here that  the reduced C*-algebra of the separated graph
$(E(m,n),C(m,n))$ of Example \ref{ex:EmnCmn}
is Morita-equivalent to the C*-algebra constructed by McClanahan
in \cite[Example 3.12]{McCla3}. Let us recall the definition in \cite{McCla3}. Let
$$(\mathcal B,\Psi) :=(M_{n+m}(\C),\Psi_1) \,\bgast_{\C^2}\,(M_2(\C),\Psi_2)$$
be the reduced amalgamated product over $\C^2$ of the algebras
$M_{n+m}(\C)$ and $M_2(\C)$, with respect to the conditional
expectations defined by
\begin{gather*}
\Psi _1 \bigl( (a_{ij}) \bigr) = \biggl( \frac{1}{n}\sum_{i=1}^n a_{ii}\,,\; \frac{1}{m}\sum _{j=1}^m a_{n+j,n+j} \biggr),  \\
\Psi_2 \biggl( \begin{pmatrix}
a & b \\ c & d
\end{pmatrix} \biggr) = (a, d).
\end{gather*}

\begin{proposition}  \label{isoMcClan1}
Let $1<m\le n$, let $(E,C) := (E(m,n),C(m,n))$ be the separated graph described in Example {\rm\ref{ex:EmnCmn}}, and let
$$(\calA, \Phi) := \Cstred(E,C) \equiv (M_{n+1}(\C),\Phi _1) \,\bgast_{\C^2}\, (M_{m+1}(\C),\Phi _2)$$ be the corresponding reduced C*-algebra.
Let $T :=v\calA v$ be the corner of $\calA$ corresponding to
$v\in E^0$, and observe that $\Phi$ restricts to a faithful, completely positive conditional
expectation $\phi : T \to \C{\cdot} 1_T$. Then we have a $*$-isomorphism
$$(\mathcal B,\Psi )\cong (M_2(\C)\otimes T, \Psi_2\otimes \phi).$$
\end{proposition}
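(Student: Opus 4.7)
The plan is to verify that $(M_2(\mathbb{C}) \otimes T,\; \Psi_2 \otimes \phi)$, equipped with suitable unital $*$-embeddings $j_1\colon M_{n+m}(\mathbb{C}) \to M_2(\mathbb{C}) \otimes T$ and $j_2\colon M_2(\mathbb{C}) \to M_2(\mathbb{C}) \otimes T$ agreeing on $\mathbb{C}^2$, satisfies conditions (1)--(5) of Definition~\ref{def:redamprod} as the reduced amalgamated product of $(M_{n+m}(\mathbb{C}),\Psi_1)$ and $(M_2(\mathbb{C}),\Psi_2)$ over $\mathbb{C}^2$; uniqueness of this reduced product then yields the claimed isomorphism. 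To construct the embeddings, let $\alpha_1,\dots,\alpha_n$ and $\beta_1,\dots,\beta_m$ be the edges of $E(m,n)$, and set $a_{ij} := \alpha_i\alpha_j^*$, $b_{ij} := \beta_i\beta_j^*$, $c_{ij} := \alpha_i\beta_j^*$ in $T$. The relations (SCK1)--(SCK2), combined with the identifications $vM_{n+1}v \cong M_n$ and $vM_{m+1}v \cong M_m$, make $\{a_{ij}\}$ and $\{b_{ij}\}$ two systems of matrix units in $T$ with $\sum_i a_{ii} = \sum_j b_{jj} = v = 1_T$, along with the mixed relations $c_{ij}c_{kl}^* = \delta_{jl}a_{ik}$, $c_{ij}^*c_{kl} = \delta_{ik}b_{jl}$, $a_{ij}c_{kl} = \delta_{jk}c_{il}$, $c_{ij}b_{kl} = \delta_{jk}c_{il}$. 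Set $j_2(A) := A \otimes 1_T$ and, on matrix units of $M_{n+m}$,
\[
j_1(E_{ij}) :=
\begin{cases}
e_{11} \otimes a_{ij} & \text{if } i,j \le n,\\
e_{22} \otimes b_{i-n,j-n} & \text{if } i,j > n,\\
e_{12} \otimes c_{i,j-n} & \text{if } i \le n < j,\\
e_{21} \otimes c_{j,i-n}^* & \text{if } j \le n < i.
\end{cases}
\]
A case check using the identities above shows that $j_1$ is a unital $*$-homomorphism, and both $j_1,j_2$ send $v\in\mathbb{C}^2$ to $e_{11}\otimes 1_T$ and $w\in\mathbb{C}^2$ to $e_{22}\otimes 1_T$.

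The next step is to verify conditions (2), (3), and (5). For (3): since $\{a_{ij}\}$ and $\{b_{ij}\}$ are the standard matrix units of $vM_{n+1}v$ and $vM_{m+1}v$ and $\Phi|_{M_{n+1}} = \Phi_1$, $\Phi|_{M_{m+1}} = \Phi_2$, one computes $\phi(a_{ij}) = \frac{1}{n}\delta_{ij}$ and $\phi(b_{ij}) = \frac{1}{m}\delta_{ij}$, while $\phi(c_{ij}) = \Phi(\alpha_i\beta_j^*) = 0$ by condition~(4) for $\mathcal{A}$ applied to the alternating pair $(\alpha_i,\beta_j^*)$ with $\alpha_i \in \ker\Phi_1$ and $\beta_j^* \in \ker\Phi_2$. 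These immediately yield $(\Psi_2 \otimes \phi) \circ j_\iota = \Psi_\iota$ for $\iota = 1,2$. For (2): the subalgebra generated by $j_1(M_{n+m}(\mathbb{C})) \cup j_2(M_2(\mathbb{C}))$ contains every $e_{kl} \otimes 1_T$ (from $j_2$) and therefore, after left and right multiplication, every $e_{kl} \otimes z$ with $z \in \{a_{ij}, b_{ij}, c_{ij}\}$; a direct analysis of products in $v\mathcal{A}v$ shows that these elements generate $T$ as a C*-algebra. Condition~(5) is the standard fact that a tensor product of faithful conditional expectations is faithful.

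The heart of the argument is condition~(4). Given an alternating sequence $(\iota_1,\dots,\iota_k)\in\{1,2\}^k$ and $x_r \in \ker\Psi_{\iota_r}$, matrix multiplication gives
\[
P_{st} := \bigl(j_{\iota_1}(x_1) \cdots j_{\iota_k}(x_k)\bigr)_{st} = \sum_{q_1,\dots,q_{k-1}\in\{1,2\}} \tilde X^{(1)}_{s,q_1}\,\tilde X^{(2)}_{q_1,q_2}\cdots\tilde X^{(k)}_{q_{k-1},t},
\]
where $\tilde X^{(r)}_{pq} \in T$ is the $(p,q)$-block of $x_r$ (if $\iota_r = 1$) or the scalar $x_r^{(p,q)} \cdot 1_T$ (if $\iota_r = 2$). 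The kernel condition $x_r \in \ker\Psi_2$ forces $x_r^{(p,p)} = 0$, so the only nonzero summands correspond to paths $(s, q_1, \dots, q_{k-1}, t)$ whose index \emph{changes} at every $\iota = 2$-step. Inside each nonzero summand, the diagonal-block factors $\tilde X^{(r)}_{11} \in M_n \subset M_{n+1}$ and $\tilde X^{(r)}_{22} \in M_m \subset M_{m+1}$ are single atoms, lying in $\ker\Phi_1$ and $\ker\Phi_2$ respectively because the trace-zero conditions defining $\ker\Psi_1$ translate exactly into these kernel conditions; the off-diagonal factors $\tilde X^{(r)}_{12}$ and $\tilde X^{(r)}_{21}$ expand as sums of two-atom products $\alpha_i\beta_j^* \in M_{n+1}\cdot M_{m+1}$ and $\beta_j\alpha_i^* \in M_{m+1}\cdot M_{n+1}$, each atom of which automatically lies in the appropriate $\ker\Phi_\iota$. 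The key observation is that the first and last atom of $\tilde X^{(r)}_{q_{r-1},q_r}$ lie in $M_{n+1}$ or $M_{m+1}$ according as $q_{r-1}$ or $q_r$ equals $1$ or $2$; so the mandatory index change at each $\iota = 2$-step is exactly what forces the concatenated atom word in $\mathcal{A}$ to alternate between $M_{n+1}$ and $M_{m+1}$. Condition~(4) applied to $\mathcal{A}$ then gives $\Phi$ vanishing on each nonzero summand, whence $\phi(P_{ss}) = 0$, completing the verification. The main obstacle is this bookkeeping: matching the $M_2$-block indices in the target with the alternating word structure of the reduced product underlying $T$, and confirming that the kernel conditions $x_r \in \ker\Psi_{\iota_r}$ in $M_{n+m}$ and $M_2$ translate correctly into atom-level kernel conditions in $\mathcal{A}$.
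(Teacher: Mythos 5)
Your proposal is correct and takes essentially the same route as the paper: the embeddings are exactly the paper's $\sigma_1,\sigma_2$, and condition (4) is verified by the same mechanism, namely that the vanishing diagonal of elements of $\ker\Psi_2$ forces an index flip at each $M_2(\C)$-letter, so the $T$-components of an alternating kernel word expand into alternating words with letters in $\ker\Phi_1\cup\ker\Phi_2$, which $\Phi$ annihilates by condition (4) for $\Cstred(E,C)$. The paper packages this as an induction over the spanning sets $\Xi$ and $\{f_{12},f_{21}\}$ (its claim (I)), while you carry out a direct block-entry path expansion for general kernel elements, but this is only a difference of bookkeeping.
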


\begin{proof}
We are going to use again the characterization of the reduced
amalgamated product. Let $e_{ij}$ and
$f_{ij}$ denote the canonical matrix units in $M_{n+m}(\C)$ and $M_2(\C)$, respectively. There exist unital *-homomorphisms $\sigma_1\colon M_{n+m}(\C)\to
M_2(\C)\otimes T$ and $\sigma _2\colon M_2(\C)\to
M_2(\C)\otimes T$ such that
$$\sigma_1(e_{ij})= \begin{cases}  f_{11}\otimes \alpha_i\alpha^*_j  &(1\le i,j\le n)\\
f_{12}\otimes \alpha_i\beta^*_{j-n}  &(1\le i\le n<j\le n+m)\\  f_{21}\otimes \beta_{i-n}\alpha^*_j  &(1\le j\le n<i\le n+m)\\  f_{22}\otimes \beta_{i-n}\beta^*_{j-n}  &(n<i,j\le n+m)
\end{cases}$$
and $\sigma _2(f_{ij}) = f_{ij}\otimes 1_T$ for $i,j=1,2$.
Now all the conditions in the definition
of the reduced amalgamated product are easily verified, with the
exception of (4), that needs some work.

Observe that $\ker \Psi_2$ is spanned by $\{f_{12},f_{21}\}$, while $\ker \Psi_1$ is spanned by the set
$$\Xi :=\{ e_{kl} : 1\le k,l\le n+m,\;  k\ne l\} \cup \{\epsilon _i : 1\le i\le n\} \cup \{ \ol{\epsilon}_j : 1\le j\le m\},$$
with $\epsilon _i :=e_{ii}-\frac{1}{n}\sum _{t=1}^n e_{tt}$ and
$\ol{\epsilon}_j :=e_{j+n,j+n}-\frac{1}{m}\sum _{s=1}^m
e_{s+n,s+n}$. We note that $\sigma_1(\epsilon_k)= f_{11}\otimes \lambda(\alpha_k)$ and $\sigma_1(\ol{\epsilon}_k)= f_{22}\otimes \lambda(\beta_k)$, where $\lambda (\alpha _k) := \alpha_k \alpha _k^*-\frac{1}{n}v$ and $\lambda (\beta _k) := \beta_k \beta _k^*-\frac{1}{m}v$.

For subsets $T_1$, $T_2$ of an algebra $\mathcal H$, denote by
$\Lambda ^{{\rm o}}(T_1,T_2)$ the set of all elements of $\mathcal
H$ of the form $a_1a_2 \cdots a_r$, where $a_j\in T_{i_j}$ and
$i_1\ne i_2$, $i_2\ne i_3$,\dots , $i_{r-1}\ne i_r$. With this
notation, to verify (4) it will be enough to show that $(\Psi
_2\otimes \phi )(a_1a_2\cdots a_r)=0$ for all  $a_1a_2\cdots a_r$ in
$\Lambda ^{{\rm o}}(\{ \sigma_2(f_{12}), \sigma_2(f_{21})
\},\sigma_1(\Xi) )$. This is, of course, clear for $r=1$. We claim
that:
\begin{enumerate}
\item[(I)] For $r\ge2$, any word $a_1a_2\cdots a_r \in \Lambda ^{{\rm o}}(\{
\sigma_2(f_{12}), \sigma_2(f_{21}) \},\sigma_1(\Xi) )$ is either
zero or has the form $f_{ij}\otimes d$ with $d\in \Lambda^{{\rm
0}}(T_\alpha,T_\beta)$, where
\begin{align*}
T_\alpha &:= \{ \alpha_k,\, \alpha^*_k,\, \lambda(\alpha_k) \mid 1\le k\le n\} \cup \{\alpha_k\alpha^*_l \mid 1\le k,l\le n,\; k\ne l \}  \\
T_\beta &:= \{ \beta_k,\, \beta^*_k,\, \lambda(\beta_k) \mid 1\le k\le m\} \cup \{ \beta_k\beta^*_l \mid 1\le k,l\le m,\; k\ne l\} \,,
\end{align*}
and also
\begin{enumerate}
\item If $a_r\in \sigma_1(\Xi)$ and $j=1$, then $d$ ends in one of $\alpha^*_l$ or $\alpha_k \alpha^*_l$ (with $k\ne l$) or $\lambda(\alpha_k)$;
\item If $a_r\in \sigma_1(\Xi)$ and $j=2$, then $d$ ends in one of $\beta^*_l$ or $\beta_k \beta^*_l$ (with $k\ne l$) or $\lambda(\beta_k)$.
\end{enumerate}
\end{enumerate}

For $c\in \sigma_1(\Xi)$, observe that
\begin{enumerate}
\item $\sigma_2(f_{12}) c\ne 0$ if and only if $c=  f_{21}\otimes \beta_{k-n}\alpha^*_l$; or $c= f_{22}\otimes \beta_{k-n}\beta^*_{l-n}$ with $k\ne l$; or $c= f_{22}\otimes \lambda(\beta_k)$.
\item $\sigma_2(f_{21}) c\ne 0$ if and only if $c= f_{11}\otimes \alpha_k\alpha^*_l$ with $k\ne l$; or $c= f_{11}\otimes \lambda(\alpha_k)$; or $c=  f_{12}\otimes \alpha_k\beta^*_{l-n}$.
\end{enumerate}
With the aid of these observations, the claim is easily established by induction on $r$.

Since $(\calA,\Phi)$ is a reduced amalgamated product,
$\Lambda^{{\rm o}}(T_\alpha,T_\beta) \subseteq \ker \Phi$. Hence, we
conclude that $\Lambda ^{{\rm o}}(\{ \sigma_2(f_{12}),
\sigma_2(f_{21}) \},\sigma_1(\Xi) ) \subseteq
\ker(\Psi_2\otimes\phi)$, as desired.
\end{proof}

The case $n=m$ is special. In this case, the reduced graph
C*-algebra admits a faithful trace and has minimal projections,
as we will see. We establish this by finding a Morita equivalence with a different example of McClanahan's, namely \cite[Example 4.1]{McCla3}. This example is given as follows:
$$(\calC,\Psi ) := (M_n(\C),\text{tr}_n) \,\bgast_{\C}\, (C(\mathbb T), \tau) \, ,$$
where $\text{tr}_n$ is the normalized matrix trace on $M_n(\C)$ and $\tau$ is the usual faithful trace on $C(\mathbb T)$. Then we
have:

\begin{proposition}  \label{isoMcClan2}
Let $n>1$, let $(E,C) := (E(n,n),C(n,n))$ be the separated graph described in Example {\rm\ref{ex:EmnCmn}}, and let
$$(\calA, \Phi) := \Cstred(E,C) \equiv (M_{n+1}(\C),\Phi _1) \,\bgast_{\C^2}\, (M_{n+1}(\C),\Phi _2)$$ be the corresponding reduced C*-algebra.
Let $T :=v\calA v$ be the corner of $\calA$ corresponding to $v\in E^0$, and observe that $\Phi$ restricts to a faithful, completely positive conditional
expectation $\phi : T \to \C{\cdot} 1_T$. There is a
$*$-isomorphism
$$(\calC,\Psi )\cong (T, \phi).$$
\end{proposition}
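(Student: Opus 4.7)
The plan is to characterize $(T,\phi)$ as the reduced amalgamated product $(M_n(\C),\text{tr}_n) \,\bgast_{\C}\, (C(\T),\tau)$ by verifying conditions (1)--(5) of Definition \ref{def:redamprod} directly, following the strategy of Proposition \ref{isoMcClan1}. Define unital $*$-homomorphisms $\sigma_1 \colon M_n(\C) \to T$ by $\sigma_1(e_{ij}) = \alpha_i\alpha_j^*$ and $\sigma_2 \colon C(\T) \to T$ by $\sigma_2(z) = u := \sum_{i=1}^n \alpha_i\beta_i^*$. The $\alpha_i\alpha_j^*$ form matrix units summing to $v = 1_T$ (using (SCK1) and (SCK2) for $X$), and $uu^* = u^*u = v$ follows from (SCK1) and (SCK2) applied to both $X$ and $Y$; hence both $\sigma_i$ are well-defined unital $*$-homomorphisms, agreeing on $\C = \C\cdot v$. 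Condition (2) follows from the identities
$$\alpha_i\alpha_j^* = \sigma_1(e_{ij}), \quad \alpha_i\beta_j^* = \sigma_1(e_{ij})\sigma_2(z), \quad \beta_i\alpha_j^* = \sigma_2(z^*)\sigma_1(e_{ij}), \quad \beta_i\beta_j^* = \sigma_2(z^*)\sigma_1(e_{ij})\sigma_2(z),$$
which express all four types of ``basic letter'' generating $T$ through $\sigma_1$ and $\sigma_2$.

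For condition (3), Theorem \ref{3condexp} gives $\Phi_X(\alpha_i\alpha_j^*) = (\delta_{ij}/n) v$, whence $\phi\circ\sigma_1 = \text{tr}_n$. For $m > 0$, the expansion $u^m = \sum_{\vec\imath} \alpha_{i_1}\beta_{i_1}^* \cdots \alpha_{i_m}\beta_{i_m}^*$ displays each summand as a $2m$-alternating word in $(\ker\Phi_X, \ker\Phi_Y)$, so condition (4) for $\calA$ forces $\Phi(u^m) = 0$; the case $m < 0$ is analogous. Density of trigonometric polynomials and continuity give $\phi\circ\sigma_2 = \tau$, which forces $\sigma_2$ injective since both $\tau$ and $\phi$ are faithful. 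Condition (5) holds because $T = v\calA v$ is a hereditary subalgebra of $\calA$ and $\Phi$ is faithful.

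The main obstacle is condition (4): for an alternating word $a_1\cdots a_r$ with each $a_j$ in $\sigma_1(\ker\text{tr}_n)$ or $\sigma_2(\ker\tau)$ (alternating in type), show $\phi(a_1\cdots a_r) = 0$. By linearity and continuity, we may assume each $a_j$ equals $\sigma_1(x_j)$ with $\text{tr}(x_j) = 0$ (so that $\sigma_1(x_j) \in \ker\Phi_X$) or $u^{m_j}$ with $m_j \ne 0$. Expanding each $u^{m_j}$ into its constituent $\alpha/\beta^*$-word and multiplying out, at each junction between adjacent $\sigma_1$- and $u^{\pm m_j}$-pieces either no cancellation occurs (preserving the alternating $(C^*(E_X), C^*(E_Y))$-structure with each segment in the appropriate kernel), or the cancellation $\alpha_p^*\alpha_{p'} = \delta_{pp'}w$ merges a $\sigma_1$-letter with an adjacent $\alpha$-letter of $u^{\pm m_j}$ into a single edge-sum of the form $\sum_p x^{(j)}_{pq}\alpha_p$ that remains in $\ker\Phi_X$. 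The delicate case is when both junctions around some $\sigma_1(x_j)$ cancel simultaneously (as in $u^{-|m|}\sigma_1(x_j) u^{m'}$): the $\sigma_1$-piece collapses to a scalar multiple of $w$, yielding an adjacent pair $\beta_i\beta_j^*$ that need not lie in $\ker\Phi_Y$. Decomposing $\beta_i\beta_j^* = (\beta_i\beta_j^* - (\delta_{ij}/n)v) + (\delta_{ij}/n)v$ splits the contribution into a kernel part (handled by condition (4) for $\calA$) and a ``diagonal $v$'' part that triggers further cascading $\alpha^*\alpha = w$ cancellations; iterating, the fully cascaded $v$-terms accumulate a factor $\text{tr}(x_j) = 0$ and hence vanish. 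Summing, $\phi(a_1\cdots a_r) = 0$. Having verified (1)--(5), the uniqueness statement in Definition \ref{def:redamprod} yields the desired isomorphism $(\calC,\Psi) \cong (T,\phi)$.
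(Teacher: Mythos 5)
Your proposal is correct and takes essentially the same route as the paper: the same $\sigma_1$, the unitary $\sum_i\alpha_i\beta_i^*$ (just the adjoint of the paper's $U=\sum_j\beta_j\alpha_j^*$), and a verification of conditions (1)--(5) of Definition \ref{def:redamprod}, with the main work being the word-cancellation analysis for condition (4) that the paper organizes as its claims (I) and (II). One remark: your ``cascading'' treatment of the double-cancellation case can be shortcut, since summing over the indices of the collapsed block gives $u^{-1}\sigma_1(x_j)u=\sum_{i,k}(x_j)_{ik}\,\beta_i\beta_k^*$, which already lies in $\ker\Phi_Y$ because $\mathrm{tr}(x_j)=0$, so no iteration is needed.
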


\begin{proof}
We are going to use again the characterization of the reduced
amalgamated product. Let $u$ denote the standard unitary generator of $C(\mathbb T)$, and let $e_{ij}$ be the canonical matrix units in $M_n(\C)$. There exist unital *-homomorphisms $\sigma_1: M_n(\C) \rightarrow T$ and $\sigma_2: C(\mathbb T) \rightarrow T$ such that
$\sigma_1(e_{ij})= \alpha_i\alpha^*_j$ for all $i$, $j$, and
$$\sigma_2(u)= U := \sum_{j=1}^n \beta_j\alpha^*_j \,.$$
Conditions (1), (2), (5) in the definition of the reduced amalgamated product are easily verified, as is the first part of (3), namely, that $\phi\circ \sigma_1= \text{tr}_n$.

As in the proof of Proposition \ref{isoMcClan1}, define
\begin{align*}
T_\alpha &:= \{ \alpha_k,\, \alpha^*_k,\, \lambda(\alpha_k) \mid 1\le k\le n\} \cup \{\alpha_k\alpha^*_l \mid 1\le k,l\le n,\; k\ne l \}  \\
T_\beta &:= \{ \beta_k,\, \beta^*_k,\, \lambda(\beta_k) \mid 1\le k\le n\} \cup \{ \beta_k\beta^*_l \mid 1\le k,l\le n,\; k\ne l\} \,,
\end{align*}
where  $\lambda (\alpha _k) := \alpha_k \alpha _k^*-\frac{1}{n}v$
and $\lambda (\beta _k) := \beta_k \beta _k^*-\frac{1}{n}v$, and
observe that $\Lambda^{{\rm o}}(T_\alpha,T_\beta) \subseteq \ker
\Phi$. Since any nonzero power of $U$ is a linear combination of
elements of $\Lambda^{{\rm o}}(T_\alpha,T_\beta)$, we see that
$\phi\sigma_2(u^t)= \Phi(U^t)=0$ for all $t\ne 0$. It follows that
$\phi\circ \sigma_2 = \tau$, verifying (3).

Observe that $\ker \text{tr}_n$ and $\ker \tau$ are the closed linear spans of the sets
\begin{align*}
\Xi &:=\{ e_{kl} : 1\le k,l\le n,\;  k\ne l\} \cup \{\epsilon _i : 1\le i\le n\}  \\
\Upsilon &:= \{ u^t : t\in \Z \setminus \{0\} \,\},
\end{align*}
with $\epsilon _i :=e_{ii}-\frac{1}{n}\sum _{t=1}^n e_{tt}$ as
before. Also, $\sigma_1(\epsilon_k)= \lambda(\alpha_k)$. To verify
condition (4), it is enough to show that $\phi(a_1a_2\cdots a_r)=0$
for all $a_1a_2\cdots a_r$ in $\Lambda^{{\rm
0}}(\sigma_1(\Xi),\sigma_2(\Upsilon))$. This is clear for $r=1$.

We claim that
\begin{enumerate}
\item[(I)] Each element $a_1a_2\cdots a_r \in \Lambda^{{\rm o}}(\sigma_1(\Xi),\sigma_2(\Upsilon))$ with $r\ge 2$ and $a_r\in \sigma_2(\Upsilon)$ can be written as a linear combination of terms $wU^t$ such that $w\in \Lambda^{{\rm o}}(T_\alpha,T_\beta)$ and one of the following holds:
\begin{enumerate}
\item $t\ge1$ and $w$ ends in one of $\alpha^*_l$ or $\alpha_k\alpha^*_l$ (with $k\ne l$) or $\lambda(\alpha_k)$;
\item $t\le 0$ and $w$ ends in one of $\beta^*_l$ or $\beta_k\beta^*_l$ (with $k\ne l$) or $\lambda(\beta_k)$.
\end{enumerate}
\end{enumerate}
The cases when $r=2$ and $a_2$ is a positive power of $U$ are clear. The other $r=2$ cases follow from the facts that
\begin{align*}
\sigma_1(e_{kl})U^{-1} &= \alpha_k\beta^*_l  &\sigma_1(\epsilon_k)U^{-1} &= \alpha_k\beta^*_k- \tfrac1nU^{-1}
\end{align*}
for all $k$, $l$. The remainder of claim (I) is proved by induction on $r$, with the help of the following observations:
\begin{align*}
U\sigma_1(e_{kl})U &= \beta_k\alpha^*_lU  &&&U\sigma_1(e_{kl})U^{-1} &= \beta_k\beta^*_l  \\
U^{-1}\sigma_1(e_{kl})U &= U^{-1}\alpha_k\alpha^*_lU  &&&U^{-1}\sigma_1(e_{kl})U^{-1} &= U^{-1}\alpha_k\beta^*_l  \\
U\sigma_1(\epsilon_k)U &= \beta_k\alpha^*_kU - \tfrac1nU^2  &&&U\sigma_1(\epsilon_k)U^{-1} &= \lambda(\beta_k)  \\
U^{-1}\sigma_1(\epsilon_k)U &= U^{-1}\lambda(\alpha_k)U  &&&U^{-1}\sigma_1(\epsilon_k)U^{-1} &= U^{-1}\alpha_k\beta^*_k- \tfrac1nU^{-2}
\end{align*}
for all $k$, $l$.

Finally, we claim that
\begin{enumerate}
\item[(II)] Every element $a_1a_2\cdots a_r \in \Lambda^{{\rm o}}(\sigma_1(\Xi),\sigma_2(\Upsilon))$ can be written as a linear combination of elements of $\Lambda^{{\rm o}}(T_\alpha,T_\beta)$.
\end{enumerate}
This is clear when $r=1$, and it follows directly from (I) when $r\ge 2$ and $a_r\in \sigma_2(\Upsilon)$, just by expanding the factors $U^t$. When $r\ge2$ and $a_r\in \sigma_1(\Xi)$, we obtain (II) from (I) with the help of the facts that
\begin{align*}
U\sigma_1(e_{kl}) &= \beta_k\alpha^*_l  &U\sigma_1(\epsilon_k) &= \beta_k\alpha^*_k- \tfrac1nU
\end{align*}
for all $k$, $l$. Since $\Lambda^{{\rm o}}(T_\alpha,T_\beta)
\subseteq \ker \Phi$, we conclude from (II) that $\Lambda^{{\rm
0}}(\sigma_1(\Xi),\sigma_2(\Upsilon)) \subseteq \ker \phi$, as
desired.
\end{proof}

\begin{corollary}  \label{cor:C*Enn}
Let $n>1$ and $(\calA, \Phi) := \Cstred(E,C)$ as in Proposition {\rm\ref{isoMcClan2}}. Then $\calA$ is a simple C*-algebra with a faithful trace. It has stable rank $1$, but does not have real rank zero.
\end{corollary}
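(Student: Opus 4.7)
The plan is to use Proposition \ref{isoMcClan2}, which identifies the corner $T := v\calA v$ isomorphically with McClanahan's reduced free product $(\calC,\Psi) := (M_n(\C),\text{tr}_n) \,\bgast_{\C}\, (C(\T),\tau)$. Simplicity of $\calA$ would follow immediately from Proposition \ref{prop:simple2} (applied with $m=n>1$). Since $v$ is then a nonzero projection in the simple C*-algebra $\calA$, it is full, so $T$ is a full hereditary subalgebra of $\calA$ and $\calA$ is strongly Morita equivalent to $T \cong \calC$; concretely, $\calA$ would be isomorphic to a full corner of $M_k(T)$ for some $k \ge 1$. The strategy is then to verify each remaining property for $\calC$ and transport it to $\calA$ through this Morita equivalence.

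For the faithful trace, I would first observe that $\Psi$ is a faithful tracial state on $\calC$: traciality holds because both $\text{tr}_n$ and $\tau$ are tracial states and the reduced free product of tracial states is tracial; faithfulness is the standard theorem on reduced free product states applied when the building-block states are faithful (\cite{dykema}, \cite{ivanov}). Pulling $\Psi$ back through Proposition \ref{isoMcClan2} gives a faithful tracial state $\phi$ on $T$. Since $T$ is therefore stably finite, so is the Morita-equivalent unital algebra $\calA$, so $\calA$ admits a tracial state, which by the simplicity of $\calA$ is automatically faithful.

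For stable rank one, I would invoke the theorem of Dykema-Haagerup-R{\o}rdam, giving $\text{sr}(\calC) = 1$. Since stable rank one is preserved under matrix amplifications and does not increase on passage to corners (Rieffel), the realization of $\calA$ as a full corner of $M_k(\calC)$ yields $\text{sr}(\calA) \le \text{sr}(M_k(\calC)) = 1$, hence $\text{sr}(\calA) = 1$. For the failure of real rank zero, the essential input is that $\calC$ itself does not have real rank zero — one way to see this is that the image of $K_0(\calC)$ under the faithful trace $\Psi$ is a non-dense cyclic subgroup of $\mathbb{R}$, whereas in any simple unital C*-algebra with a faithful tracial state and real rank zero this image is forced to be dense. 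Since real rank zero descends to hereditary subalgebras (Brown-Pedersen), and $T \cong \calC$ is a hereditary subalgebra of $\calA$, $\calA$ cannot have real rank zero either. The main obstacle in this plan is referential rather than technical: the substantive content sits in the existing structural results for $\calC$, and the only care needed is to track the direction of inheritance of each property across the Morita equivalence (upward for the trace and for stable rank, downward for real rank zero).
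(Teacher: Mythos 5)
Most of your plan coincides with the paper's proof: simplicity via Proposition \ref{prop:simple2}, traciality and faithfulness of $\Psi$ from the corresponding properties of $\mathrm{tr}_n$ and $\tau$, stable rank one from Dykema's theorem on reduced free products with a diffuse abelian free factor (\cite[Proposition 3.4]{dykema2}), and transport of these properties across the Morita equivalence coming from Proposition \ref{isoMcClan2}. Two small soft spots: the density principle you invoke (``simple unital, faithful tracial state, real rank zero $\Rightarrow$ dense trace image of $K_0$'') is false without assuming the algebra is infinite-dimensional (matrix algebras are counterexamples), though that is easily repaired here; and the step ``$T$ has a faithful tracial state $\Rightarrow$ $\calA$ is stably finite $\Rightarrow$ $\calA$ admits a tracial state'' silently uses the Blackadar--Handelman/Haagerup quasitrace machinery (legitimate since $\calA$ is exact by Remark \ref{rm:exact}), whereas it is simpler to restrict $\phi\otimes\mathrm{Tr}$ from $M_2(T)$ to the corner isomorphic to $\calA$.

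The genuine gap is in the real-rank-zero argument. You assert that the image of $K_0(\calC)$ under $\Psi$ is a non-dense cyclic subgroup of $\mathbb{R}$, but you offer no way to establish this, and nothing proved up to that point determines $K_0(\calC)$ or the trace values of its projections. That computation is exactly the substantive content of the paper's proof: one first computes $K_0$ of the \emph{full} free product $M_n(\C) * C(\T)$ to be infinite cyclic on $[e_{11}]$ via Thomsen's six-term sequence (Theorem \ref{thm:thomsen}), and then one needs the nontrivial fact that the canonical map from the full to the reduced free product induces an isomorphism on K-theory --- McClanahan's \cite[Corollary 8.7]{McCla4}, or Germain's KK-equivalence theorem \cite{Germain}. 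Only then does one know $\Psi_*(K_0(\calC))=(1/n)\Z$, from which the paper deduces that $e_{11}$ is a minimal projection in the infinite-dimensional simple algebra $\calC$, which is incompatible with real rank zero (this is essentially your density criterion in equivalent form). Without supplying this transfer of K-theory from the full to the reduced free product, your non-density claim is unsupported and the failure of real rank zero is not proved.
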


\begin{proof} By Proposition {\rm\ref{isoMcClan2}}, $\calA$ is Morita equivalent to McClanahan's example $\calC$, where $(\calC,\Psi ) := (M_n(\C),\text{tr}_n) \,\bgast_{\C}\, (C(\mathbb T), \tau)$. We show that $\calC$ has the described properties. Simplicity follows from either \cite[Proposition 3.3]{McCla3} or Proposition \ref{prop:simple2}, and $\Psi$ is a trace because $\text{tr}_n$ and $\tau$ are traces (\cite[Proposition 1.4]{avitzour} or \cite[2.5.3]{VDN}). It is faithful because $\text{tr}_n$ and $\tau$ are faithful.

Next, since $C(\mathbb T)$ is a diffuse abelian algebra with respect to $\tau$, meaning that $\tau$ is
given by an atomless measure on $\mathbb T$, it follows from \cite[Proposition 3.4]{dykema2}
that $\calC$ has stable rank $1$.

Finally, we consider the K-theory of the full free product algebra
$\calC_{\text{full}} := M_n(\C) * C(\mathbb T)$. Write $e_{ij}$ for
the canonical matrix units in the copies of $M_n(\C)$ appearing in
the different algebras under consideration.  Since $K_0(M_n(\C))$
and $K_0(C(\mathbb T))$ are infinite cyclic, with generators
$[e_{11}]$ and $[1]$, respectively, it follows from Theorem
\ref{thm:thomsen} that $K_0(\calC_{\text{full}})$ is infinite
cyclic, with generator $[e_{11}]$. McClanahan showed in
\cite[Corollary 8.7]{McCla4} (cf.~\cite[Example 4.1]{McCla3}) that
the natural map $\calC_{\text{full}} \to \calC$ induces isomorphisms
in K-theory. (This also follows from \cite[Theorem 4.1]{Germain}.)
Consequently, $K_0(\calC)$ is infinite cyclic, with generator
$[e_{11}]$. The faithful trace $\Psi$ on $\calC$ thus takes values
in $(1/n)\Z$, and it follows that $e_{11}$ is a minimal projection
in $\calC$. Therefore $\calC$ cannot have real rank $0$.
\end{proof}

\section{Problems}
\label{sect:openproblems}

In this final section, we discuss some open problems which arise
naturally in this investigation.

\begin{openproblem}
\label{opideals} Compute the lattices of closed ideals of the full and reduced
graph C*-algebras of a finitely separated graph $(E,C)$, in terms of graph-theoretic data. In
particular, find characterizations of simplicity of $C^*(E,C)$ and/or $\Cstred(E,C)$ in terms of $(E,C)$.

For the ordinary graph C*-algebra $C^*(E)$ of a countable graph $E$, the lattice of gauge-invariant closed ideals was characterized in \cite[Theorem 3.6, Corollaries 3.8, 3.10]{BHRS}; this gives the full lattice of closed ideals in case $E$ satisfies Condition (K) ([ibid], \cite[Theorem 3.5]{DrTo}). A characterization of simplicity of $C^*(E)$ was found earlier, in \cite[Theorem 12]{Szy}. For the Leavitt path algebra of a separated graph $(E,C)$, the lattice of trace ideals was characterized in \cite[Theorem 6.11]{AG}, and necessary and sufficient conditions for ``trace-simplicity'' of $L(E,C)$ were obtained in \cite[Theorem 7.1]{AG}.
\end{openproblem}

\begin{openproblem}
\label{opequals} Find conditions when the full graph C*-algebra
of a finitely separated graph $(E,C)$ equals the reduced one. Observe that
this is always the case for non-separated graphs (Theorem
\ref{thm:red-prod-inj}(2)). A necessary condition for equality is
that the full C*-algebra needs to be exact (Remark
\ref{rm:exact}).

Exactness often fails, however, as shown by Duncan \cite{Dun}. First, if there is a vertex $v\in E^0$ at which there are two loops lying in different members of $C_v$, then $C^*(E,C)$ is not exact \cite[Proposition 6]{Dun}. Second, if there are vertices $v,w\in E^0$ and three edges from $v$ to $w$ which lie in distinct members of $C_v$, then  $C^*(E,C)$ is not exact \cite[Proposition 7]{Dun}.
\end{openproblem}

\begin{openproblem}
\label{pur-inf-simple} When the reduced graph C*-algebra of a finitely separated graph is
simple and infinite (as in some of the examples in Section
\ref{sect:idealsinCred}), is it purely infinite? Does it at least
have real rank zero? Both answers are positive in the non-separated case \cite[Theorem 18]{Szy}.
\end{openproblem}

\begin{openproblem}
\label{sr1rr0} When the reduced graph C*-algebra of a finitely separated graph is
simple and finite, must have it stable rank one? The answer is positive in the non-separated case \cite[Theorem 18]{Szy}. The corresponding question for real rank zero is answered negatively by Corollary \ref{cor:C*Enn}.
\end{openproblem}

\begin{openproblem}
\label{kktheory}
For free products of two \emph{nuclear} C*-algebras with faithful
states, Germain proved in \cite{Germain} that the natural map from
the full free product to the reduced one is a $KK$-equivalence and
so it induces an isomorphism in K-theory. Note that this applies to
the examples in Proposition \ref{prop:twoLeavialgs}.

 Is there a corresponding result for amalgamated
free products? This would apply in particular to all the examples of
Section \ref{sect:idealsinCred}, for which we could then compute the
$K$-theory of the reduced graph C*-algebras (thanks to Theorem
\ref{thm:KTHsepgraph}).
\end{openproblem}

\begin{openproblem}
\label{op-vmonoid}
Let $(E,C)$ be a finitely separated graph. Let $M(E,C)$ be the
abelian monoid with generators $\{a_v\mid v\in E^0\}$ and relations
given by $a_v=\sum _{e\in X} a_{r(e)}$ for all $v\in E^0$
and all $X\in C_v$. It was shown in \cite[Theorem 4.3]{AG} that there
is a natural isomorphism $M(E,C)\to \mon{L(E,C)}$, sending $a_v$ to
$[v]\in \mon{L(E,C)}$, where $\mon{L(E,C)}$ is the abelian monoid of
Murray-von Neumann equivalence classes of projections in matrices
over $L(E,C)$.

 Is the natural map $M(E,C)\to \mon{C^*(E,C)}$
also an isomorphism? Equivalently, is the natural induced map
$\mon{L(E,C)}\to \mon{C^*(E,C)}$ an isomorphism?

We conjecture that the answer to this question is
positive. This is certainly the case for non-separated graphs
(\cite[Theorem 7.1]{AMP}). If the answer is positive, it would follow, as in \cite[Corollary 4.5]{AG}, that every conical abelian monoid is isomorphic to $\mon{C^*(E,C)}$ for some finitely separated graph $(E,C)$.
\end{openproblem}

\end{document}